\numberwithin{equation}{section}
\newtheorem{thm}{Theorem}[section]
\newtheorem{lem}[thm]{Lemma}
\newtheorem{rem}[thm]{Remark}
\newcommand{\eqnref}[1]{(\ref {#1})}
\newcommand{\ds}{\displaystyle}
\newcommand{\nm}{\noalign{\smallskip}}
\newcommand{\beq}{\begin{equation}}
\newcommand{\eeq}{\end{equation}}
\newcommand{\dist}{\operatorname{dist}}
\newcommand{\diam}{\operatorname{diam}}
\newcommand{\ip}[1]{\left\langle#1\right\rangle}
\newcommand{\supp}{\operatorname{supp}}
\providecommand{\abs}[1]{\lvert#1\rvert}
\providecommand{\norm}[1]{\lVert#1\rVert}
\newcommand{\p}{\partial}
\newcommand{\qed}{\hfill $\Box$ \medskip}
\newcommand{\ie}{{\it i.e.}}
\newcommand{\Fcal}{\mathcal{F}}
\newcommand{\Mcal}{\mathcal{M}}
\newcommand{\Ncal}{\mathcal{N}}
\newcommand{\Rcal}{\mathcal{R}}
\newcommand{\Scal}{\mathcal{S}}
\newcommand{\RR}{\mathbb{R}}
\newcommand{\CC}{\mathbb{C}}
\newcommand\Lt{L^*}
\providecommand{\set}[1]{\{#1\}}
\providecommand{\Biggabs}[1]{\Biggl\lvert#1\Biggr\rvert}
\providecommand{\abs}[1]{\lvert#1\rvert}
\providecommand{\norm}[1]{\lVert#1\rVert}
\newcommand{\Ge}{\epsilon}
\newcommand{\Gg}{\gamma}
\newcommand{\Gm}{\mu}
\newcommand{\Go}{\omega}
\newcommand{\GD}{\Delta}
\newcommand{\GG}{\Gamma}
\newcommand{\GO}{\Omega}
\begin{document}
\title{Sharp estimates for the Neumann functions and applications to quantitative
photo-acoustic imaging in inhomogeneous media\thanks{\footnotesize
This work was supported by the ERC Advanced Grant Project
MULTIMOD--267184 and NRF grants No. 2009-0090250, 2010-0017532, and
2011-0004177.}}

\author{Habib Ammari\thanks{\footnotesize Department of Mathematics and Applications, Ecole Normale Sup\'erieure,
45 Rue d'Ulm, 75005 Paris, France (habib.ammari@ens.fr).} \and
Hyeonbae Kang\thanks{Department of Mathematics, Inha University,
Incheon 402-751, Korea (hbkang@inha.ac.kr).}  \and Seick
Kim\thanks{\footnotesize Department of Mathematics, Yonsei
University, Seoul 120-749,  Korea (kimseick@yonsei.ac.kr).}}

\maketitle

\begin{abstract}
We obtain sharp $L^p$ and H\"older estimates for the Neumann
function of the operator $\nabla \cdot \gamma \nabla - ik$ on a
bounded domain. We also obtain quantitative description of its
singularity. We then apply these estimates  to quantitative
photo-acoustic imaging in inhomogeneous media. The problem is to
reconstruct the optical absorption coefficient of a diametrically
small anomaly from the absorbed energy density.
\end{abstract}

\noindent {\footnotesize {\bf AMS subject classifications.} 31B20,
35J25, 35J08}

\noindent {\footnotesize {\bf Key words.} Neumann function,
variable coefficients, quantitative photo-acoustic imaging,
asymptotic expansion, diffusion approximation}

%
\section{Introduction and motivation}
%

The purpose of this paper is to derive sharp estimates of the Neumann function of the operator
$\nabla \cdot \gamma \nabla - ik$ and its derivatives, where $\gamma$ is an (scalar) elliptic coefficient defined on a bounded domain $\GO \subset \RR^d$ ($d \ge 3$) and $k$ is a positive constant. The Neumann function of $\nabla \cdot \gamma \nabla - ik$ in $\GO$ is the function $N:\Omega\times\Omega\to \CC\cup\set{\infty}$ satisfying
\beq\label{basicprob}
\left\{
\begin{aligned}
- (\nabla \cdot \gamma \nabla - ik) N(\cdot,y) &=\delta_y \quad \text{in }\;\Omega,\\
\gamma \nabla N(\cdot,y)\cdot n &= 0 \quad \text{on }\;\partial\Omega,
\end{aligned}
\right. \eeq for all $y\in\Omega$, where $\delta_y$ is the Dirac
mass at $y$ and $n$ is the outward unit normal vector field on
$\p\GO$ (see subsection \ref{sec:nf} for a precise definition of
the Neumann function). The function $N(x,y)$ has a singularity at
$x=y$. We are particularly interested in describing in a
quantitative manner the singularity of $N(x,y)$ and its dependence
on the parameter $k$.

The investigation of this paper is motivated by quantitative
photo-acoustic imaging, particularly by the recent work
\cite{ABJK2}.



The purpose of quantitative photo-acoustic imaging is to image the
optical absorption coefficient from  the absorbed energy. The
absorbed energy is  obtained from boundary measurements of the
pressure wave induced by the photoacoustic effect. We refer to
\cite{ABJK} and references
 therein for recent development on this inverse problem. Reconstruction of the
 optical absorption coefficient, $\mu_a$,
from the absorbed energy, $A$, is more delicate than the
reconstruction of the absorbed energy from the pressure wave since
$\mu_a$ is related to $A$ in an implicit and non-linear
 way (see Section \ref{sec:app}). One direction of research in quantitative
  photo-acoustic imaging is to reconstruct the absorption coefficient of diametrically small unknown
  anomalies. In \cite{ABJK2, ABJW}, efficient methods
  to reconstruct $\Gm_a$ from $A$ are proposed and implemented numerically when
  there is a small absorbing anomaly in the background medium. The methods use in an essential
   way an asymptotic expansion of $A$ in terms of $\mu_a$ when the diameter of the
   anomaly tends to $0$. The asymptotic expansion is derived using estimates of the
    Neumann function under the assumption that the scattering coefficient of the
    medium is constant. In order to extend the results of \cite{ABJK2, ABJW}
    to inhomogeneous media, we shall derive sharp estimates of the
    Neumann function of problem \eqnref{basicprob}, which is exactly what this
    paper aims at.

To describe the kinds of results obtained in this paper, let us
fix a point $z \in \GO$ ($z$ indicates the location of the
anomaly), and let $\Gg^*:=\Gg(z)$. Let $\GG(x):= -1/(4\pi |x|)$ be
a fundamental solution of the Laplacian in three dimensions. Then,
we will show by precise estimates depending on $k$ that the
singularity of $N(x,z)$ for $x$ near $z$ is of the form
$\frac{1}{\Gg^*} \GG(x-z)$. We also show that the singularity of
the derivatives of $N(x,z)$ is given by the derivatives of
$\frac{1}{\Gg^*} \GG(x-z)$. We also derive $L^p$, pointwise, and
H\"older estimates of the Neumann function $N$. We then use these
estimates to derive an asymptotic expansion in inhomogeneous media
where the scattering coefficient $\mu_s$ is not constant.

This paper is organized as follows. In Section 2, we derive $L^p$
and pointwise estimates of the Neumann function $N$. In Section 3,
we show how these estimates can be used for reconstructing the
absorption coefficient of a small absorbing anomaly.

%
\section{Estimates for Neumann functions}
%

This section is devoted to the study of the Neumann function for
the operator $L$ given by
\[
Lu=\nabla \cdot (\gamma \nabla u) -iku
\]
in a bounded domain $\Omega\subset \RR^d$ with $d\ge 3$. Here, we
assume that $k$ is a positive constant satisfying $k\ge k_0$ for
some $k_0>0$ and $\gamma:\Omega \to \RR$ satisfies the uniform
ellipticity condition
\begin{equation}\label{eqP-02}
\nu \le \gamma(x) \le \nu^{-1},  \quad\forall x\in\Omega,
\end{equation}
for some constant $\nu\in (0,1]$.

We first introduce some (standard) notation and definitions that will be used
throughout the paper. Let $\Omega \in \RR^d$ $(d\geq 3)$ be a bounded Lipschitz domain. We
call $\diam(\Omega)$ the least upper bound of the distances
between pairs of points in $\Omega$. We say that a function $f$ on
$\Omega$ admits a modulus of continuity $\theta$ if
$\theta:\RR_+\to \RR_+$ is a nondecreasing function such that
\[
\abs{f(x)-f(y)} \le \theta(\abs{x-y}),\quad\forall x,y\in\Omega.
\]
For $0< \lambda <1$ and $f\in \mathcal{C}^{0,\lambda}(\Omega)$, we
let $[f]_{0,\lambda;\Omega}$ denote the $\lambda$-H\"older
seminorm of $f$ in $\Omega$; {\it i.e.},
\[
[f]_{0,\lambda;\Omega} = \sup_{x, y\in \Omega; x\neq
y}\frac{\abs{f(x)-f(y)}}{\abs{x-y}^{\lambda}}.
\]

For $ p \geq 1$ and $m$ a non-negative integer, we define the
space $W^{m,p}(\Omega)$ as the family of all $m$ times weakly
differentiable functions in $L^p(\Omega)$, whose weak
derivatives of orders up to $m$ are functions in
$L^p(\Omega)$. We let $W_0^{m,p}(\Omega)$ to be the
closure of $\mathcal{C}^\infty_c(\Omega)$ in
$W^{m,p}(\Omega)$, where $\mathcal{C}^\infty_c(\Omega)$ is the
set of all infinitely differentiable functions with compact
supports in $\Omega$. We use
$\mathcal{C}_{loc}^{0,\lambda}(\Omega)$ and
$W_{loc}^{m,p}(\Omega)$  to denote the local spaces of
functions belonging respectively to
$\mathcal{C}^{0,\lambda}(\Omega^\prime)$ and
$W^{m,p}(\Omega^\prime)$ for all $\Omega^\prime \subset
\subset \Omega$.
We write $u\in L^p(\Omega;\CC)$ (or $u \in W^{m,p}(\Omega;\CC)$, etc.) to emphasize that $u$ is a complex valued function.
We recall that for $m=1$ and $p=2$, the spaces
$W^{1,2}(\Omega;\CC)$ and $W_0^{1,2}(\Omega;\CC)$, equipped with
the inner product
\[
\ip{u, v}:= \int_\Omega \nabla u \cdot \overline{\nabla  v} + u
\overline v \,dx,
\]
are Hilbert spaces.  Finally, for $p>1$ and $q$ being its
conjugate exponent, {\it i.e.}, $1/p + 1/q =1$,  we use
$W^{-1,q}(\Omega;\CC)$ and $W_0^{-1,q}(\Omega;\CC)$ to
respectively denote the dual spaces to $W_0^{1,p}(\Omega;\CC)$ and
$W^{1,p}(\Omega;\CC)$.

Our main result in this section is the following.
\begin{thm}\label{thm27}
Let $\Omega\subset \RR^d$ be a bounded $\mathcal{C}^1$ domain. Let
$\gamma\in \mathcal{C}^{0,\lambda}(\overline\Omega)$ for some
$\lambda \in (0,1)$. Let $N$ be the Neumann function of $L$ in
$\Omega$. For $y\in\Omega$, denote $\gamma_0=\gamma(y)$ and let
$N_0$ be the Neumann function for $L_0=\nabla \cdot \gamma_0
\nabla -ik$ in $\Omega$. Then we have
\begin{equation}\label{eq1.46bc}
\abs{N(x,y)-N_0(x,y)} \le C \abs{x-y}^{2-d+\lambda},\quad \forall
x\in\Omega,\quad x\neq y,
\end{equation}
where $C$ is a constant depending only on $d, \nu, k_0, \lambda,
\Omega$, and  $[\gamma]_{0,\lambda;\Omega}$. Also, if
$\,0<\abs{x-y}<d_y/2$, where $d_y=\dist(y,\partial\Omega)$, then
we  have
\begin{equation}                \label{eq1.59gg}
\abs{\nabla_x ( N(x,y)- N_0(x,y))} \le C
\left(\abs{x-y}^{1-d+\lambda}+ k \abs{x-y}^{3-d+\lambda}\right),
\end{equation}
where the constant $C$ depends on $\diam\Omega$ as well. Moreover,
if we assume further that $\gamma \in
\mathcal{C}^{1,\lambda}(\overline \Omega)$, then for all $x \in
\Omega$ satisfying $0<\abs{x-y}<d_y/2$, we have
\begin{align}               \label{eq2.61hj}
\abs{\nabla_x (N(x,y)- N_0(x,y))} &\le C \abs{x-y}^{1-d+\lambda},\\
                            \label{eq2.60gg}
\abs{\nabla^2_x (N(x,y)- N_0(x,y))} &\le C
\left(\abs{x-y}^{-d+\lambda}+k \abs{x-y}^{2-d+\lambda}\right),
\end{align}
where $C$ depends only on
$\norm{\gamma}_{\mathcal{C}^{1,\lambda}(\Omega)}$, $d, \nu, k_0,
\lambda, \Omega$, and $\diam \Omega$.
\end{thm}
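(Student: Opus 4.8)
The plan is to work with the difference $w:=N(\cdot,y)-N_0(\cdot,y)$, which is well behaved precisely because $\gamma_0=\gamma(y)$. From $L N(\cdot,y)=-\delta_y=L_0 N_0(\cdot,y)$ and $L_0-L=\nabla\cdot((\gamma_0-\gamma)\nabla)$ one obtains $Lw=\nabla\cdot((\gamma_0-\gamma)\nabla N_0(\cdot,y))$; since $\gamma_0\nabla N_0(\cdot,y)\cdot n=0$ on $\partial\Omega$ forces $\nabla N_0(\cdot,y)\cdot n=0$ there, $w$ is in fact the weak solution of
\[
\int_\Omega \gamma\nabla w\cdot\nabla\phi + ik\,w\phi \,dx = \int_\Omega(\gamma_0-\gamma)\,\nabla N_0(\cdot,y)\cdot\nabla\phi\,dx,\qquad \phi\in W^{1,2}(\Omega;\CC).
\]
Before using this I would verify that $w$ actually lies in $W^{1,2}$ near $y$ and that $N(\cdot,x)$ is an admissible test function; this is routine, the point being that the leading singularities $\tfrac1{\gamma(y)}\Gamma(x-y)$ of $N$ and $\tfrac1{\gamma_0}\Gamma(x-y)$ of $N_0$ coincide and cancel. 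Then, testing against $N(\cdot,x)$ and using the symmetry $N(x,z)=N(z,x)$,
\[
w(x)=\int_\Omega (\gamma_0-\gamma(z))\,\nabla_z N_0(z,y)\cdot\nabla_z N(z,x)\,dz .
\]

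From here \eqref{eq1.46bc} follows by inserting the singular bounds established earlier in this section, $\abs{\nabla_z N_0(z,y)}\le C\abs{z-y}^{1-d}$ and $\abs{\nabla_z N(z,x)}\le C\abs{z-x}^{1-d}$, together with $\abs{\gamma_0-\gamma(z)}\le[\gamma]_{0,\lambda;\Omega}\abs{z-y}^\lambda$, and then the elementary bound $\int_{\RR^d}\abs{z-y}^{1-d+\lambda}\abs{z-x}^{1-d}\,dz\le C\abs{x-y}^{2-d+\lambda}$ (which holds since $1-d>-d$ and $2-d+\lambda<0$, so the integral over all of $\RR^d$ already converges). For the gradient bound \eqref{eq1.59gg} I would \emph{not} differentiate the representation formula --- the kernel $\nabla_x\nabla_z N$ is of order $\abs{z-x}^{-d}$ and, with $\gamma$ only H\"older, is not even classically defined. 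Instead, fix $x$ with $r:=\abs{x-y}<d_y/2$, so that $B:=B(x,r/2)\subset\Omega$ stays away from $y$; on $B$ rewrite the equation as $\nabla\cdot(\gamma\nabla w)=\nabla\cdot g+ikw$ with $g:=(\gamma_0-\gamma)\nabla N_0(\cdot,y)$, and apply the interior gradient estimate for divergence-form operators with H\"older coefficients after rescaling to unit scale. Since $\abs{g}\lesssim r^{1-d+\lambda}$ and $[g]_{0,\lambda;B}\lesssim r^{1-d}$ on $B$, and $\norm{w}_{L^\infty(B)}\lesssim r^{2-d+\lambda}$ by \eqref{eq1.46bc}, the elliptic-regularity data contribute $\lesssim r^{1-d+\lambda}$, while the lower-order forcing $ikw$ contributes $\lesssim r\,k\,\norm{w}_{L^\infty(B)}\lesssim k\,r^{3-d+\lambda}$; this is exactly the second term in \eqref{eq1.59gg}.

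Under the stronger hypothesis $\gamma\in\mathcal{C}^{1,\lambda}(\overline\Omega)$ the divergence source can be traded for an honest function. Away from $y$ one has $\gamma_0\Delta N_0(\cdot,y)=ik\,N_0(\cdot,y)$, so $\nabla\cdot((\gamma_0-\gamma)\nabla N_0)=-\nabla\gamma\cdot\nabla N_0+\tfrac{ik}{\gamma_0}(\gamma_0-\gamma)N_0$ (the Dirac contribution $\tfrac{\gamma_0-\gamma(y)}{\gamma_0}\delta_y$ vanishing), whence $w$ solves $-\nabla\cdot(\gamma\nabla w)+ikw=h$ with homogeneous conormal data and
\[
h:=\nabla\gamma\cdot\nabla N_0(\cdot,y)-\tfrac{ik}{\gamma_0}(\gamma_0-\gamma)\,N_0(\cdot,y),\qquad \abs{h(z)}\le C\bigl(\abs{z-y}^{1-d}+k\,\abs{z-y}^{2-d+\lambda}\bigr),
\]
using that $\nabla\gamma$ is bounded. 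Then $w(x)=\int_\Omega N(x,z)h(z)\,dz$ and $\nabla_x w(x)=\int_\Omega\nabla_x N(x,z)\,h(z)\,dz$; bounding $\abs{\nabla_x N(x,z)}\le C\abs{x-z}^{1-d}$ and exploiting the decay of $N_0(\cdot,y)$ --- the weight $\abs{z-y}^{2-d+\lambda}N_0(z,y)$ being essentially supported where $\abs{z-y}\lesssim k^{-1/2}$, so that the prefactor $k$ is absorbed --- the convolution integrals give $\abs{\nabla_x w(x)}\le C\abs{x-y}^{1-d+\lambda}$ with no surviving $k$, which is \eqref{eq2.61hj}. Finally \eqref{eq2.60gg} comes from the rescaled interior $\mathcal{C}^{2,\lambda}$ (Schauder) estimate for $\nabla\cdot(\gamma\nabla w)=ikw+h$ on $B(x,r/2)$, now admissible since $\gamma\in\mathcal{C}^{1,\lambda}$: here $r^{-2}\norm{w}_{L^\infty(B)}$ gives $r^{-d+\lambda}$ and the lower-order forcing again produces $k\,r^{2-d+\lambda}$.

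The main obstacle is the \emph{$k$-independent} estimate \eqref{eq2.61hj}: the interior-estimate route used for \eqref{eq1.59gg} necessarily loses a factor $k r^2$ through the forcing $ikw$, and removing it forces one to pass to the global representation $w(x)=\int N(x,z)h(z)\,dz$ and to invoke sharp decay (not merely size) of $N_0$ --- which is in turn why $\gamma\in\mathcal{C}^{1,\lambda}$ is needed, so that $h$ is a genuine function rather than a distribution. The other delicate point, though routine, is the justification of the two representation formulas in the presence of the non-$L^2$ singularities; everything else reduces to standard elliptic estimates.
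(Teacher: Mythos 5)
The step that breaks is the very first estimate, \eqref{eq1.46bc}. Your representation formula $w(x)=\int_\Omega(\gamma_0-\gamma(z))\,\nabla_z N_0(z,y)\cdot\nabla_z N(z,x)\,dz$ is only useful if you can insert the pointwise bound $\abs{\nabla_z N(z,x)}\le C\abs{z-x}^{1-d}$ for \emph{all} $z\in\Omega$ (the integral runs up to $\partial\Omega$) with $C$ independent of $k$. That bound is nowhere available: the paper proves it only for the constant-coefficient $N_0$ (via \eqref{eq3rd}, exploiting that derivatives of solutions of the frozen equation are again solutions), while for the variable-coefficient $N$ it only has $L^p$ and weak-$L^{d/(d-1)}$ bounds on $\nabla N$, which are too weak to run your convolution estimate near the pole $y$. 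Up to the boundary of a merely $\mathcal{C}^1$ domain such a pointwise gradient bound is not to be expected at all, and even in the interior, with $\gamma$ only H\"older, the natural perturbation argument produces an extra term $k\abs{z-x}^{3-d}$ --- this is precisely why \eqref{eq1.59gg} carries the $k$-term and why the clean gradient bound \eqref{eq2.61hj} needs $\gamma\in\mathcal{C}^1$. So the input you take as ``established earlier in this section'' is essentially as strong as the theorem itself, and your proof of \eqref{eq1.46bc} (hence also of \eqref{eq1.59gg} and \eqref{eq2.60gg}, which correctly use \eqref{eq1.46bc} as input and are otherwise in line with the paper's freezing/Schauder argument) is circular. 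The paper avoids this entirely: it treats $u=N(\cdot,y)-N_0(\cdot,y)$ as a weak solution of $-Lu=-\nabla\cdot F$ with $F=(\gamma-\gamma_0)\nabla N_0(\cdot,y)\in L^q$, $q<d/(d-1-\lambda)$, and feeds this into the $W^{1,p}$ and weak-$L^p$ solvability theory in $\mathcal{C}^1$ domains (Lemmas~\ref{lem2.2ap}, \ref{lem2.3ap}) plus a cut-off bootstrapping and a local-boundedness iteration; only the pointwise gradient bound for $N_0$ is ever used.

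The same objection applies to your route to \eqref{eq2.61hj}: $\nabla_x w(x)=\int\nabla_x N(x,z)h(z)\,dz$ again needs the unproven pointwise bound on $\nabla_x N(x,z)$ for $z$ possibly near $\partial\Omega$, and in addition the exponential decay of $N_0$ at scale $k^{-1/2}$, which is not established in the paper for the Neumann function of a bounded domain. Moreover, your diagnosis that ``the interior-estimate route necessarily loses a factor $kr^2$'' is not correct: the paper removes the $k$-dependence precisely by an interior-type argument --- since $\gamma\in\mathcal{C}^1$, one differentiates the equation, so $v=\partial u/\partial x_i$ solves $-Lv=\nabla\cdot\tilde F$ with $\tilde F=(\partial\gamma/\partial x_i)\nabla u+\partial F/\partial x_i$, and then applies the local boundedness estimate \eqref{eq1.14vt}, whose constants are $k$-uniform because the $iku$ term stays inside $L$ and is killed by taking real parts in the energy estimates. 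This yields \eqref{eq2.61hj} with no global representation and no decay estimates. If you want to salvage your approach, you would have to first prove a global, $k$-uniform gradient bound for $N$ in a $\mathcal{C}^1$ domain, which is a harder (and in this generality doubtful) statement than the one being proved.
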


In this section, we first consider the Neumann boundary value
problems for the operators $L$ and its adjoint $\Lt$ given by
 \beq \label{adjointL}
 \Lt:=\nabla \cdot \gamma \nabla + i k.
 \eeq
Then we give a definition of a Neumann function. Next, we
construct Neumann functions, $N$ and $N^*$, of respectively $L$
and $\Lt$ in $\Omega$. Our construction of $N$ and $N^*$ holds for
a Lipschitz bounded domain $\Omega$  and a coefficient $\gamma$
uniformly continuous on $\bar \Omega$. If we further assume that
$\Omega$ is of class $\mathcal{C}^1$, then we are able to derive
$L^p$ estimates for the operators $L$ and $\Lt$ with Neumann
boundary conditions on $\partial \Omega$. Finally, based on the following
global pointwise bound for the Neumann function $N$: \beq
\label{globalb} \abs{N(x,y)} \le C \abs{x-y}^{2-d}\quad \text{for
all }\, x,y\in\Omega\;\text{ with }\;x\ne y,
\end{equation}
where $C$ depends only on $d, \nu, \Omega$, $k_0$, and $\theta$ (a modulus of continuity of $\gamma$), we
describe the local behavior of $N$ such as \eqnref{eq1.46bc}.  Assuming that $\gamma \in
\mathcal{C}^{0,\lambda}(\bar \Omega),$ for $0<\lambda <1$, we
prove that estimates \eqnref{eq1.46bc}--\eqnref{eq2.60gg} hold.

Estimates of \eqnref{globalb}-type were derived for the Dirichlet Green's
function of $L$ with $k=0$ and $\gamma \in L^{\infty}(\Omega)$ in \cite{LSW63, GW82}.
Under the further assumption that the principal coefficients are uniformly continuous of belong to the class VMO, they were generalized to the
vectorial case  in \cite{fuchs,DM, HK07,KK10} and to the periodic case in \cite{AL91,KS11}.

%
\subsection{Neumann boundary value problem}\label{sec:nbp}
%

We begin with the weak formulation of the Neumann boundary value problem
\begin{equation}\label{eq1.00sk}
\left\{
\begin{aligned}
-L u= f +\nabla\cdot F \quad & \text{in }\;\Omega,\\
(\gamma \nabla u + F)\cdot n = g \quad & \text{on }\;\partial\Omega,
\end{aligned}
\right.
\end{equation}
where $f \in L^1_{loc}(\Omega;\CC)$, $F \in
L^1_{loc}(\Omega;\CC^d)$, and $g\in
L^1_{loc}(\partial\Omega;\CC)$. We say that $u\in
W^{1,1}_{loc}(\Omega)$ is a weak solution of problem
\eqref{eq1.00sk} if the following identity holds:
\[
\int_\Omega (\gamma \nabla u \cdot \overline{\nabla  \phi} + ik u
\overline \phi)  \,dx  =\int_\Omega (f \overline \phi - F\cdot
\overline{\nabla \phi}) \,dx +\int_{\partial\Omega} g \overline
\phi \,d\sigma,\quad \forall \phi \in \mathcal{C}^\infty(\overline
\Omega;\CC).
\]
Let $H=W^{1,2}(\Omega;\CC)$. We define the sesquilinear form
$B(\cdot,\cdot):H \times H \to \CC$, associated to the operator
$L$, as
\[
B(u,v):=\int_\Omega (\gamma \nabla u \cdot \overline{\nabla  v} +
ik u \overline v)  \,dx.
\]
It is easy to check that $B$ is bounded and coercive.

Let $f \in L^{2d/(d+2)}(\Omega;\CC)$, $F\in L^2(\Omega;\CC^d)$, and $g \in L^2(\partial\Omega;\CC)$.
Then by the Sobolev embedding and the trace theorem, we find that
\[
\Fcal (v):=\int_\Omega (f \overline v-F\cdot \overline{\nabla
v})\,dx+ \int_{\partial\Omega}  g \overline v\,d\sigma
\]
is a bounded skew-linear functional on $H$.
Therefore, by the Lax-Milgram lemma, we find that there exists a unique $u\in H$ such that
\[
B(u,v)=\Fcal(v),\quad \forall v\in H.
\]
We have thus shown that if $f \in L^{2d/(d+2)}(\Omega;\CC)$, $F\in
L^2(\Omega;\CC^d)$, and $g \in L^2(\partial\Omega;\CC)$, then
problem \eqnref{eq1.00sk} has a unique weak solution $u$ in
$W^{1,2}(\Omega;\CC)$. Since
$\mathcal{C}^\infty(\overline\Omega;\CC)$ is dense in
$W^{1,2}(\Omega;\CC)$, we find that $u$ satisfies following
identity:
\begin{equation}                \label{eq1.05hh}
\int_\Omega (\gamma \nabla u \cdot \overline{\nabla  v} + ik u
\overline v ) \,dx =\int_\Omega (f \overline v- F\cdot
\overline{\nabla v})\,dx +\int_{\partial\Omega} g \overline v
\,d\sigma,\quad \forall v \in W^{1,2}(\Omega;\CC).
\end{equation}

Let $\Lt$ be given by \eqnref{adjointL}. By the same reasoning, we
find that  there exists a unique weak solution $u$ in
$W^{1,2}(\Omega;\CC)$ of problem
\[
\left\{
\begin{aligned}
-\Lt u= f+\nabla \cdot F \quad &\text{in }\;\Omega,\\
(\gamma \nabla u +F)\cdot n=g \quad &\text{on }\;\partial\Omega,
\end{aligned}
\right.
\]
provided $f \in L^{2d/(d+2)}(\Omega;\CC)$ , $F\in L^2(\Omega;\CC^d)$, and $g \in L^2(\partial\Omega;\CC)$; \ie,
\begin{equation}\label{eq1.07tt}
\int_\Omega (\gamma \nabla u \cdot \overline{\nabla  v} - ik u
\overline v ) \,dx  =\int_\Omega (f \overline
v-F\cdot\overline{\nabla v})\,dx+\int_{\partial\Omega} g \overline
v \,d\sigma,\quad \forall v \in W^{1,2}(\Omega;\CC).
\end{equation}

%
\subsection{Definition of the Neumann function} \label{sec:nf}
%

We say that a function $N:\Omega\times\Omega\to
\CC\cup\set{\infty}$ is a Neumann function of $L$ in $\Omega$ if
it satisfies the following properties:
\begin{enumerate}
\item[i)]
$N(\cdot,y) \in W^{1,1}_{loc}(\Omega)$ and $N(\cdot,y) \in W^{1,2}(\Omega\setminus B_r(y))$ for all $y\in\Omega$ and $r>0$.
\item[ii)]
$N(\cdot, y)$ is a weak solution of
\[
\left\{
\begin{aligned}
-L N(\cdot,y) =\delta_y \quad &\text{in }\;\Omega,\\
\gamma \nabla N(\cdot,y)\cdot n = 0 \quad &\text{on }\;\partial\Omega,
\end{aligned}
\right.
\]
for all $y\in\Omega$ in the  sense
\[
\int_{\Omega} (\gamma(x) \nabla_x N(x,y) \cdot \overline{\nabla
\phi(x)} + i k N(x,y) \overline{\phi(x)}) \, dx = \overline
\phi(y),\quad \forall  \phi\in
\mathcal{C}^\infty(\overline\Omega;\CC).
\]
\item[iii)] For any $f \in \mathcal{C}_c^\infty(\Omega; \CC)$, the
function $u$ given by
\begin{equation}        \label{eq2.9x}
u(x):=\int_\Omega \overline N(y,x)  f(y)\,dy
\end{equation}
is the unique solution in $W^{1,2}(\Omega)$ of problem
\begin{equation}                \label{eq2.10yq}
\left\{
\begin{aligned}
-\Lt u&= f \quad\text{in }\;\Omega,\\
\gamma \nabla u \cdot n&=0 \quad \text{on }\;\partial\Omega.
\end{aligned}
\right.
\end{equation}
\end{enumerate}

We remark that part iii) of the above definition gives the
uniqueness of a Neumann function. Indeed, let $\tilde N(x,y)$ be
another function satisfying the above properties. Then by the
uniqueness of a solution in $W^{1,2}(\Omega;\CC)$ of problem
\eqref{eq2.10yq}, we have
\[
\int_\Omega (\overline{N} - \overline{\tilde N})(y,x)
f(y)\,dy=0,\quad \forall  f \in \mathcal{C}_c^\infty(\Omega; \CC),
\]
and thus we conclude that $N = \tilde N$ {\it a.e.} in
$\Omega\times\Omega$.

\subsection{Local boundedness estimates}
Let $B_R=B_R(x_0)$ be the ball of radius $R$ centered at $x_0$, and let $u\in W^{1,2}(B_R)$ be a weak solution of $-Lu=0$ in $B_R$.
For $0<\rho<R$, let $\eta$ be a smooth cut-off function satisfying
\[
0\le \eta \le 1,\quad \supp \eta \subset B_R,\quad \eta\equiv
1\;\text{ on }\; B_\rho, \quad\text{and}\quad \abs{\nabla \eta}
\le 2/(R-\rho).
\]
By taking $\eta^2 \overline u$ as a test function, we get
\[
\int_{B_R} \gamma \eta^2 \abs{\nabla u}^2\,dx=-\int_{B_R}
2\gamma\eta \overline u \nabla u \cdot \nabla \eta \,dx
-ik\int_{B_R} \eta^2\abs{u}^2\,dx.
\]
By taking real parts in the above and using Cauchy's inequality, we get
\begin{equation}\label{eq1.09ww}
\int_{B_R} \gamma \eta^2 \abs{\nabla u}^2 = -\Re \int_{B_R} 2\gamma \eta \overline u \nabla u \cdot \nabla \eta \,dx \le \frac{1}{2} \int_{B_R} \gamma \eta^2\abs{\nabla u}^2\,dx+ 2\int_{B_R} \gamma \abs{\nabla \eta}^2 \abs{u}^2\,dx.
\end{equation}
Therefore, we obtain Caccioppoli's inequality
\begin{equation}\label{eq1.10ax}
\int_{B_\rho} \abs{\nabla u}^2\,dx \le \frac{C}{(R-\rho)^2} \int_{B_R} \abs{u}^2\,dx,
\end{equation}
where $C=C(\nu)$.

Next, we consider the operator $L_0$ defined by
\begin{equation}                    \label{eq1.14tm}
L_0 u= \nabla \cdot(\gamma_0  \nabla u) - iku=\gamma_0\Delta
u-iku,
\end{equation}
where $\gamma_0$ is a constant satisfying the condition \eqref{eqP-02}.
Let $u\in W^{1,2}(B_1)$ be a weak solution of $-L_0 u =0$.
Since $L_0$ has constant coefficients, we may apply \eqref{eq1.10ax} to derivatives of $u$ iteratively to get
\[
\norm{u}_{W^{m,2}(B_{1/2})} \le C(m,\nu) \norm{u}_{L^2(B_1)},\quad m=1,2,\ldots.
\]
By the Sobolev embedding theorem, we then have
\[
\sup_{B_{1/2}}\, \abs{u} \le C(d) \norm{u}_{W^{m,2}(B_{1/2})} \le C(d,\nu) \norm{u}_{L^2(B_1)},
\]
where $m=[d/2]+1$. Here and throughout this paper $[s]$ denotes the smallest integer not less than $s$. Since the above estimate does not depend on $k$, by a scaling argument we conclude that if $u\in W^{1,2}(B_R)$ is a weak solution of $-L_0 u=0$ in $B_R$, then we have
\[
\sup_{B_{R/2}}\, \abs{u} \le C(d,\nu) R^{-d/2} \norm{u}_{L^2(B_R)}.
\]
Similarly, if $u\in W^{1,2}(B_R)$ is a weak solution of $-L_0 u=0$ in $B_R$, then we have
\begin{equation}\label{eq3rd}
\sup_{B_{R/2}}\, \abs{\nabla u} \le C R^{-d/2} \norm{\nabla u}_{L^2(B_R)}.
\end{equation}
It follows from the above estimate that for all $0<\rho<r \le R$, we have
\begin{equation}\label{eq1.11yn}
\int_{B_\rho} \abs{\nabla u}^2\,dx \le C(\rho/r)^d \int_{B_r} \abs{\nabla u}^2 \,dx,
\end{equation}
where $C=C(d,\nu)$.
Indeed, in the case when $\rho< r/2$, we utilize \eqref{eq3rd} to get the above estimate; otherwise, then we may simply take $C=2^d$ in \eqref{eq1.11yn}.

Observe that the same estimates are valid for $u\in W^{1,2}(B_R)$ satisfying
 $-\Lt_0u=0$ weakly in $B_R$, where $\Lt_0$ is defined as $\Lt_0 = \nabla \cdot \gamma_0 \nabla + ik$.

\begin{lem}                 \label{lem1.1ap}
Assume that $\gamma \in \mathcal{C}^0(\overline\Omega)$ and let
$\theta$ be a modulus of continuity of $\gamma$. Let $B_R=B_R(x_0)
\subset \Omega$ and let $u\in W^{1,2}(B_R;\CC)$ be a weak solution
of either $-L u =f+\nabla \cdot F$ or $-\Lt u=f+\nabla \cdot F$ in
$B_R$, where $f\in L^q(B_R;\CC)$ with $q>d/2$ and $F\in
L^p(B_R;\CC^d)$ with $p>d$. Then $u$ is locally H\"older
continuous in $B_R$ and the following estimate holds:
\begin{equation}                    \label{eq1.13vv}
R^{\lambda_0}[u]_{0,\lambda_0;B_{R/2}} \le C_0
\left(R^{-d/2}\norm{u}_{L^2(B_R)}+ R^{2-d/q}\norm{f}_{L^q(B_R)} +
R^{1-d/p}\norm{F}_{L^p(B_R)}\right),
\end{equation}
where $\lambda_0 \in (0,1)$ and $C_0$ are constants depending on
$d, \nu, p, q$, and $\theta$, and $[u]_{0,\lambda_0;D}$ denotes
the $\lambda_0$-H\"older seminorm of $u$ in $D$. Moreover, for any
$p_0>0$ and $0<\rho<R$, we have
\begin{equation}                    \label{eq1.14vt}
\sup_{B_\rho} \,\abs{u} \le C \left((R-\rho)^{-d/p_0}\left(\int_{B_R}\abs{u}^{p_0}\,dx\right)^{1/p_0}
+R^{2-d/q}\norm{f}_{L^q(B_R)}+R^{1-d/p}\norm{F}_{L^p(B_R)}\right),
\end{equation}
where $C$ depends on $d, \nu, p, q, p_0$, and $\theta$.
\end{lem}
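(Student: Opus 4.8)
\emph{Strategy.} The guiding idea is that the zeroth--order term $\mp iku$ must be kept \emph{inside} the operator rather than moved to the right--hand side: since $\gamma$ is real and the coefficient $\mp ik$ is purely imaginary, in every energy identity obtained by pairing the equation with a test function of the form $\eta^{2}\psi(\abs{u})\,u$, with $\eta$ a real cut--off and $\psi\ge 0$ real, the zeroth--order contribution equals $\RE\bigl(\mp ik\int_{B_R}\eta^{2}\psi(\abs{u})\abs{u}^{2}\bigr)=0$. Hence, after taking real parts, all such identities coincide with the ones for the scalar real divergence--form operator $u\mapsto\nabla\cdot(\gamma\nabla u)$, and the resulting constants carry no $k$. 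The second ingredient is already at hand: the interior energy--decay estimate \eqnref{eq1.11yn} for the constant--coefficient operator $L_{0}$ of \eqnref{eq1.14tm} (and for $\Lt_{0}$), which, as observed there, is independent of $k$. By this symmetry it suffices to treat $-Lu=f+\nabla\cdot F$; the case $-\Lt u=f+\nabla\cdot F$ is identical with $i$ replaced by $-i$. Finally, by the rescaling $x\mapsto x_{0}+Rx$ (under which $\gamma$ is replaced by a coefficient with modulus of continuity $t\mapsto\theta(Rt)$, and $\theta(0^{+})=0$ by the uniform continuity of $\gamma$ on $\overline{\Omega}$) we may normalize radii as convenient.

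\emph{Local boundedness \eqnref{eq1.14vt}.} I would run Moser's iteration directly on the complex equation. For $\beta\ge 1$, $\delta>0$, and $M>0$ take $\phi=\eta^{2}u\bigl(\min(\abs{u}^{2},M^{2})+\delta\bigr)^{\beta-1}$ as test function, take real parts so the $iku$--term drops out, estimate the $f$-- and $F$--terms by H\"older's, Sobolev's and Young's inequalities exactly as in the proof of interior local boundedness for scalar divergence--form equations, let $M\to\infty$, and iterate in $\beta$. This produces $\sup_{B_{\rho}}\abs{u}$ bounded by $(R-\rho)^{-d/2}\norm{u}_{L^{2}(B_{R})}$ plus the stated $f,F$ terms; the extension to an arbitrary exponent $p_{0}>0$ is the standard interpolation device. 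Since no step involves $k$, the constant depends only on $d,\nu,p,q,p_{0}$ and $\theta$.

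\emph{H\"older continuity \eqnref{eq1.13vv}: Campanato's perturbation method.} Fix $x_{1}\in B_{R/2}$ and a ball $B_{r}(x_{1})\subset B_{R/2}$, and let $h$ solve $-L_{0}h=0$ in $B_{r}(x_{1})$ with $h-u\in W^{1,2}_{0}(B_{r}(x_{1});\CC)$, where $L_{0}=\gamma(x_{1})\Delta-ik$. Then $v:=u-h\in W^{1,2}_{0}(B_{r}(x_{1});\CC)$ solves $-L_{0}v=\nabla\cdot\bigl((\gamma-\gamma(x_{1}))\nabla u\bigr)+f+\nabla\cdot F$; pairing with $\overline v$ and taking real parts annihilates the term $\RE(ik\int\abs{v}^{2})$ and gives, via Sobolev's and Young's inequalities,
\[
\int_{B_{r}(x_{1})}\abs{\nabla v}^{2}\le C\,\theta(r)^{2}\int_{B_{r}(x_{1})}\abs{\nabla u}^{2}+C\Bigl(r^{\,d+2-2d/q}\norm{f}_{L^{q}(B_{R})}^{2}+r^{\,d-2d/p}\norm{F}_{L^{p}(B_{R})}^{2}\Bigr).
\]
Combining with the $k$--free decay \eqnref{eq1.11yn} for $h$ yields, for $0<\rho\le r$,
\[
\int_{B_{\rho}(x_{1})}\abs{\nabla u}^{2}\le C\bigl[(\rho/r)^{d}+\theta(r)^{2}\bigr]\int_{B_{r}(x_{1})}\abs{\nabla u}^{2}+C\Bigl(r^{\,d+2-2d/q}\norm{f}_{L^{q}(B_{R})}^{2}+r^{\,d-2d/p}\norm{F}_{L^{p}(B_{R})}^{2}\Bigr).
\]
Since $q>d/2$ and $p>d$, both source exponents exceed $d-2$; choosing $\lambda_{0}$ with $0<\lambda_{0}<\min\{1,\,2-d/q,\,1-d/p\}$ and invoking the standard iteration lemma (using $\theta(0^{+})=0$ to absorb the $\theta(r)^{2}$ term at sufficiently small scales) gives the Morrey--type decay $\int_{B_{\rho}(x_{1})}\abs{\nabla u}^{2}\le C\rho^{\,d-2+2\lambda_{0}}\,\Mcal$, uniformly in $x_{1}\in B_{R/2}$, where --- after a Caccioppoli inequality with right--hand side, proved as \eqnref{eq1.10ax} but retaining the $f,F$ terms --- $\Mcal$ is controlled precisely by the quantities on the right of \eqnref{eq1.13vv} (the powers of $R$ match exactly). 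By Morrey's Dirichlet--growth characterization of H\"older spaces this gives $u\in\mathcal{C}^{0,\lambda_{0}}$ locally with $R^{\lambda_0}[u]_{0,\lambda_{0};B_{R/2}}$ bounded by $C\,\Mcal^{1/2}$, which, after a routine covering argument, is \eqnref{eq1.13vv}.

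\emph{Main obstacle.} The only real subtlety is the $k$--uniformity of the constants: absorbing $\mp iku$ into the right--hand side fails, because that term is only known to lie in $L^{2}$ and, worse, it reintroduces a power of $k$ into the estimate. Both difficulties disappear through the two observations above --- the imaginary zeroth--order coefficient vanishes upon taking real parts in every energy estimate, and the constant--coefficient comparison estimate \eqnref{eq1.11yn} is itself $k$--free. What remains is bookkeeping: the Moser and Campanato iterations, and the scaling normalization ensuring the constants depend only on $d,\nu,p,q$ (and $p_{0}$) and the modulus $\theta$ (a harmless dependence on $\diam\Omega$ may enter the iteration when $R$ is not small).
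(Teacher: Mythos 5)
Your proposal is correct, and its core — the H\"older estimate \eqref{eq1.13vv} — follows essentially the same route as the paper: freeze $\gamma$ at a point, compare $u$ with the $L_0$-solution (your $h$ is the paper's $v=u-w$, your $v$ its $w$), take real parts so the $ik$ term disappears and the constant-coefficient decay \eqref{eq1.11yn} applies with $k$-free constants, run Campanato's iteration and Morrey's Dirichlet-growth theorem, and finally control the Dirichlet integral by a Caccioppoli inequality with right-hand side; your source-term exponents match the paper's (e.g.\ $r^{d+2-2d/q}$ is the square of the paper's $r^{d/2+1-d/q}$). Where you genuinely diverge is \eqref{eq1.14vt}: the paper deduces the sup bound \emph{from} the H\"older seminorm estimate, by writing $\abs{u(x)}\le\abs{u(x')}+R^{\lambda_0}[u]_{0,\lambda_0;B_{R/2}}$, averaging in $x'$, and then invoking the standard iteration of Giaquinta to pass to arbitrary $p_0>0$ and $\rho<R$; you instead run Moser's iteration directly on the complex equation with test functions $\eta^2 u\bigl(\min(\abs{u}^2,M^2)+\delta\bigr)^{\beta-1}$, again using that taking real parts annihilates the purely imaginary zeroth-order term. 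Both are standard and valid; your route makes the local boundedness independent of the Campanato machinery (and, in fact, of the modulus $\theta$), at the price of carrying out the Moser scheme for complex-valued $u$, while the paper's route gets \eqref{eq1.14vt} almost for free once \eqref{eq1.13vv} is known. The $k$-uniformity observation you emphasize is exactly the mechanism the paper uses as well.
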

\begin{proof}
We consider the case when $u$ is a weak solution of
\begin{equation}\label{eq1.19uu}
-Lu=f+\nabla \cdot F\quad\text{in }\;B_R.
\end{equation}
The proof for the other case is identical.
Let $R_0>0$ be a number to be
fixed later. Let $y\in B_R$ and $0<r\le R_0$ be arbitrary but
fixed. Denote $\gamma_0=\gamma(y)$ and let $L_0$ be defined as in
\eqref{eq1.14tm}. Observe that $u$ is a weak solution of
\[
-L_0 u=f+\nabla \cdot F +\nabla\cdot ((\gamma-\gamma_0)\nabla
u)\quad\text{in }\;B_R.
\]
Let $w\in W^{1,2}_0(B_r(y))$ be the unique weak solution of
\[
\left\{
\begin{aligned}
-L_0 w&=f+\nabla\cdot F +\nabla\cdot ((\gamma-\gamma_0)\nabla u)\quad\text{in }\;B_r(y),\\
w &=0 \quad \text{on }\;\partial B_r(y).
\end{aligned}
\right.
\]
Then $w$ satisfies the following identity:
\[
\int_{B_r(y)} (\gamma_0 |\nabla w|^2 +i k |w|^2
)\,dx=\int_{B_r(y)} (f \overline w- F\cdot \overline{\nabla w} -
(\gamma-\gamma_0) \nabla u\cdot \overline{\nabla w}) \,dx.
\]
Taking the real parts in the above and using Sobolev embedding,
Poincar\'e inequality, and H\"older's inequalities, we may deduce
that
\begin{multline*}
\int_{B_r(y)} \gamma_0 \abs{\nabla w}^2\,dx \le C r^{d/2+1-d/q}\norm{f}_{L^q(B_r(y))}\norm{\nabla w}_{L^2(B_r(y))}\\
+C r^{d/2-d/p} \norm{F}_{L^p(B_r(y))}\norm{\nabla w}_{L^2(B_r(y))}
+\theta(r) \norm{\nabla u}_{L^2(B_r(y))} \norm{\nabla
w}_{L^2(B_r(y))}.
\end{multline*}
Denote $\lambda_1=2-d/q$ and $\lambda_2=1-d/p$. From the above
inequality, we obtain
\[
\norm{\nabla w}_{L^2(B_r(y))}^2 \le
Cr^{d-2+2\lambda_1}\norm{f}_{L^q(B_r(y))}^2+
Cr^{d-2+2\lambda_2}\norm{F}_{L^p(B_r(y))}^2+ C \theta(r)^2
\norm{\nabla u}_{L^2(B_r(y))}^2 .
\]
On the other hand, observe that $v:=u-w$ satisfies $-L_0 v=0$ weakly in $B_r(y)$.
Therefore, by \eqref{eq1.11yn}, for $0<\rho<r$, we get
\begin{align*}
& \int_{B_\rho(y)} \abs{\nabla u}^2\,dx \le 2 \int_{B_\rho(y)} \abs{\nabla v}^2\,dx+2\int_{B_\rho(y)} \abs{\nabla w}^2\,dx \\
& \le C\left(\frac{\rho}{r}\right)^d  \int_{B_r(y)} \abs{\nabla v}^2\,dx + 2\int_{B_r(y)} \abs{\nabla w}^2\,dx\\
& \le C\left(\frac{\rho}{r}\right)^d  \int_{B_r(y)} \abs{\nabla u}^2\,dx+C\int_{B_r(y)} \abs{\nabla w}^2\,dx\\
&\le  C\left[\left(\frac{\rho}{r}\right)^d+\theta(r)^2\right]
\int_{B_r(y)} \abs{\nabla u}^2\,dx+
Cr^{d-2+2\lambda_1}\norm{f}_{L^q(B_r(y))}^2+
Cr^{d-2+2\lambda_2}\norm{F}_{L^p(B_r(y))}^2.
\end{align*}
By Campanato's iteration argument (see, for instance,
\cite[Lemma~2.1, p. 86]{Gi83}), we find that if $\theta(R_0)$ is
small enough, then for all  $0<\rho<r\le R_0$ we have
\begin{multline*}
\int_{B_\rho(y)} \abs{\nabla u}^2 \,dx\le C\left(\frac{\rho}{r}\right)^{d-2+2\lambda_0} \int_{B_r(y)} \abs{\nabla u}^2\,dx\\
+C\rho^{d-2+2\lambda_0}r^{2(\lambda_1-\lambda_0)}\norm{f}_{L^q(B_r(y))}^2+
C\rho^{d-2+2\lambda_0}r^{2(\lambda_2-\lambda_0)}\norm{F}_{L^p(B_r(y))}^2,
\end{multline*}
where $0<\lambda_0<\min(\lambda_1,\lambda_2)=\min(2-d/q,1-d/p)$.
The above estimate (via Morrey's characterization of H\"older
continuous functions in terms of Dirichlet integrals; see, for
instance, \cite[Theorem~3.5.2]{Morrey}) implies that $u$ is
locally H\"older continuous in $B_R$ and, in particular, we have
the estimate
\begin{equation}\label{eq:bene}
R^{2\lambda_0}[u]_{0,\lambda_0;B_{R/4}}^2 \le  C
\left(R^{2-d}\int_{B_{R/2}}\abs{\nabla u}^2\,dx+
R^{2(2-d/q)}\norm{f}_{L^q(B_R)}^2+R^{2(1-d/p)}\norm{F}_{L^p(B_R)}^2\right).
\end{equation}

Let $\eta$ be a smooth cut-off function satisfying
\[
0\le \eta \le 1,\quad \supp \eta \subset B_R,\quad \eta\equiv
1\;\text{ on }\; B_{R/2}, \quad\text{and}\quad \abs{\nabla \eta}
\le 4/R.
\]
By taking $\eta^2 \overline u$ as a test function in \eqref{eq1.19uu}, we get
\begin{multline*}
\int_{B_R} \gamma \eta^2 \abs{\nabla u}^2\,dx+ik\int_{B_R} \eta^2\abs{u}^2\,dx\\
=-\int_{B_R} 2\gamma\eta \overline u \nabla u \cdot \nabla \eta \,dx +\int_{B_R} \eta^2
f \overline u\,dx+ \int_{B_R} \eta^2  F\cdot \overline{\nabla u}\,dx
+ \int_{B_R} 2 \eta \overline u F\cdot \nabla \eta\,dx.
\end{multline*}
By taking the real parts in the above and using Cauchy's
inequality, we get
\[
\int_{B_{R/2}} \abs{\nabla u}^2\,dx \le C R^{-2} \int_{B_R}  \abs{u}^2 \,dx+CR^2 \int_{B_R} \abs{f}^2\,dx
+C \int_{B_R} \abs{F}^2\,dx.
\]
By H\"older's inequality, we then obtain
\[
\int_{B_{R/2}} \abs{\nabla u}^2\,dx \le C R^{-2} \int_{B_R}  \abs{u}^2 \,dx+CR^{2+d-2d/q} \norm{f}_{L^q(B_R)}^2
+CR^{d-2d/p} \norm{F}_{L^p(B_R)}^2.
\]
By combining \eqref{eq:bene} and the above inequality, we get \eqref{eq1.13vv} via a standard covering argument.

Observe that for any $x \in B_{R/2}$, we have
\[
\abs{u(x)} \le \abs{u(x')}+ \abs{u(x)-u(x')} \le \abs{u(x')} +
R^{\lambda_0}[u]_{0,\lambda_0;B_{R/2}},\quad \forall x'\in
B_{R/2}.
\]
By taking average with respect to $x'$ in $B_{R/2}$ and then using \eqref{eq1.13vv} and H\"older's inequality we get
\[
\sup_{B_{R/2}}\,\abs{u} \le C\left(R^{-d/2}\norm{u}_{L^2(B_R)}+R^{2-d/q}\norm{f}_{L^q(B_R)}+R^{1-d/p}\norm{F}_{L^p(B_R)} \right).
\]
By using a standard iteration argument (see \cite[pp. 80--82]{Gi93}), we obtain  \eqref{eq1.14vt} from the above inequality. This completes the proof. \qed
\end{proof}

\subsection{Construction of Neumann functions}
The aim of this subsection is to construct Neumann functions of
$L$ and $\Lt$ in $\Omega$ and derive their basic properties. The
following theorem holds.
\begin{thm}\label{thm1ap}
Assume $\gamma \in \mathcal{C}^0(\overline\Omega)$. Then there
exist Neumann functions $N(x,y)$ and $N^* (x,y)$ of respectively
$L$ and $\Lt$ in $\Omega$. Moreover, there exists $\lambda_0\in
(0,1)$ such that $N(\cdot,y),\, N^*(\cdot,y) \in
\mathcal{C}^{0,\lambda_0}_{loc}(\Omega\setminus \set{y})$ for all
$y\in\Omega$ and the identity,
\begin{equation}                    \label{eq3.01mq}
N^* (x,y):= \overline N(y,x),\quad \forall x, y \in \Omega,\;\;
x\ne y,
\end{equation}
holds. Furthermore, the following estimates hold uniformly in
$y\in\Omega$, where we denote $d_y=\dist(y,\partial\Omega)$:
\begin{enumerate}
\item[{\rm i)}] $\norm{N(\cdot,y)}_{L^{2d/(d-2)}(\Omega\setminus
B_r(y))}+\norm{\nabla N(\cdot,y)}_{L^2(\Omega\setminus B_r(y))}
\le C r^{1-d/2}\;$ for all $r \in (0,d_y)$. \item[{\rm ii)}]
$\norm{N(\cdot,y)}_{L^p(B_r(y))}\le C r^{2-d+d/p}\;$ for all $r\in
(0,d_y)$, where $p\in [1,\frac{d}{d-2})$. \item[{\rm iii)}]
$\abs{\set{x\in\Omega:\abs{N(x,y)}>t}}\le C t^{-d/(d-2)}\;$ for
all $t> d_y^{2-d}$. \item[{\rm iv)}] $\norm{\nabla
N(\cdot,y)}_{L^p(B_r(y))}\le C r^{1-d+d/p}\;$ for all $r\in (0,
d_y)$, where $p\in [1,\frac{d}{d-1})$. \item[{\rm v)}]
$\abs{\set{x\in\Omega:\abs{\nabla_x N(x,y)}>t}}\le C
t^{-d/(d-1)}\;$ for all $t> d_y^{1-d}$. \item[{\rm vi)}]
$\abs{N(x,y)}\le C \abs{x-y}^{2-d}\;$ whenever
$0<\abs{x-y}<d_y/2$. \item[{\rm vii)}] $\abs{N(x,y)-N(x',y)} \le C
\abs{x-x'}^{\lambda_0} \abs{x-y}^{2-d-\lambda_0}\;$ if
$2\abs{x-x'}<\abs{x-y}<d_y/2$.
\end{enumerate}
In the above, $C$ is a constant depending on $d, \nu, k_0, \Omega$,
and $\theta$; it depends on $p$ as well in ii) and  iv). The
estimates i) -- vii) are also valid for $N^*(x,y)$. Finally, if
$q>d/2$ and $p>d$, then for any $f \in L^q(\Omega, \CC)$, $F \in
L^p(\Omega;\CC^d)$ and $g\in L^2(\partial\Omega;\CC)$, the
function $u$ given by
\begin{equation}        \label{eqM1.e}
u(x):=\int_\Omega \left( N(x,y) f(y) -\nabla_y N(x,y) \cdot F(y) \right) \,dy+ \int_{\partial\Omega} N(x,y) g(y)\,d\sigma(y)
\end{equation}
is the unique solution in $W^{1,2}(\Omega)$ of problem
\eqref{eq1.00sk}.

\end{thm}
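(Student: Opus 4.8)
The plan is to obtain $N(\cdot,y)$ as a limit of solutions of regularized problems and then to verify properties i)--vii) and the representation formula one at a time. For $y\in\Omega$ and small $\rho>0$ let $N_\rho(\cdot,y)\in W^{1,2}(\Omega;\CC)$ be the unique weak solution --- furnished by the Lax--Milgram argument of Subsection~\ref{sec:nbp} with $f=|B_\rho(y)|^{-1}\mathbf{1}_{B_\rho(y)}$, $F=0$, $g=0$ --- of $-LN_\rho(\cdot,y)=|B_\rho(y)|^{-1}\mathbf{1}_{B_\rho(y)}$ with homogeneous Neumann data; equivalently, $B(N_\rho(\cdot,y),v)=|B_\rho(y)|^{-1}\int_{B_\rho(y)}\overline v\,dx$ for all $v\in W^{1,2}(\Omega;\CC)$. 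The core of the proof is to show that the bounds i)--vii), with $N$ replaced by $N_\rho$, hold with constants independent of $\rho$ (for $\rho$ small relative to the relevant scales). Taking real parts in the defining identity removes the term $ikN_\rho\overline v$, so Caccioppoli's inequality \eqref{eq1.10ax} applies verbatim; combining it with the Sobolev inequality and the truncation argument of Gr\"uter--Widman \cite{GW82} (testing against functions of the form $(|N_\rho|-t)_+$) yields the weak-type bound iii) for $N_\rho$, whence the $L^p$ bound ii) and the energy bounds i) follow by integrating the distribution function. The gradient bounds iv)--v) are proved in the same spirit, and the interior pointwise and H\"older bounds vi)--vii) then follow from the interior estimates \eqref{eq1.14vt} and \eqref{eq1.13vv} of Lemma~\ref{lem1.1ap}, applied on balls $B_s(x)$ with $s\simeq|x-y|$ --- on which, once $\rho$ is small, $N_\rho(\cdot,y)$ solves the homogeneous equation --- together with ii). The same estimates hold for the adjoint approximants $N^*_\rho(\cdot,y)$ solving $-\Lt N^*_\rho(\cdot,y)=|B_\rho(y)|^{-1}\mathbf{1}_{B_\rho(y)}$, since $\Lt$ is of the same class.

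Granted these uniform bounds, I would fix $y$ and extract a sequence $\rho_j\downarrow 0$ along which $N_{\rho_j}(\cdot,y)$ converges to some $N(\cdot,y)$: weakly in $W^{1,2}(\Omega\setminus B_r(y))$ for each $r>0$ and weakly in $W^{1,p}_{loc}(\Omega)$ for $p<d/(d-1)$, the limit inheriting i)--vi) by weak lower semicontinuity, inheriting the finiteness in part i) of the definition of a Neumann function, and --- via the interior H\"older estimate \eqref{eq1.13vv} --- belonging to $\mathcal{C}^{0,\lambda_0}_{loc}(\Omega\setminus\set{y})$ and satisfying vii). Passing to the limit in $B(N_{\rho_j}(\cdot,y),\phi)=|B_{\rho_j}(y)|^{-1}\int_{B_{\rho_j}(y)}\overline\phi\,dx$ for $\phi\in\mathcal{C}^\infty(\overline\Omega;\CC)$ yields part ii) of the definition. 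Construct $N^*(\cdot,y)$ from $N^*_{\rho_j}$ in the same way. Part iii) of the definition is then checked by the usual duality: for $f\in\mathcal{C}^\infty_c(\Omega;\CC)$ let $u$ be the Lax--Milgram solution of \eqref{eq2.10yq}, use $N_{\rho_j}(\cdot,x)$ as a test function in the equation for $u$ and $u$ as a test function in the equation for $N_{\rho_j}(\cdot,x)$, apply the identity $\overline{B^*(a,b)}=B(b,a)$ relating the forms of $L$ and $\Lt$, and let $j\to\infty$ to obtain $u(x)=\int_\Omega\overline N(y,x)f(y)\,dy$; since $u$ is unique and $f$ is arbitrary, this also shows that the limit $N$ does not depend on the subsequence, which is the uniqueness remarked after the definition. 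The symmetry \eqref{eq3.01mq} is obtained the same way: inserting $N^*_\sigma(\cdot,y)$ as a test function in the equation for $N_\rho(\cdot,z)$ and conversely, and again using $\overline{B^*(a,b)}=B(b,a)$, gives $|B_\sigma(y)|^{-1}\int_{B_\sigma(y)}N_\rho(\cdot,z)\,dx=|B_\rho(z)|^{-1}\int_{B_\rho(z)}\overline{N^*_\sigma(\cdot,y)}\,dx$; letting first $\sigma\downarrow 0$ and then $\rho\downarrow 0$ (using the H\"older convergence away from the singularities) gives $N(y,z)=\overline{N^*(z,y)}$.

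For the final assertion, fix $f\in L^q(\Omega;\CC)$ with $q>d/2$, $F\in L^p(\Omega;\CC^d)$ with $p>d$, and $g\in L^2(\partial\Omega;\CC)$. Since $q>d/2\ge 2d/(d+2)$ and $p>d\ge 2$, Subsection~\ref{sec:nbp} furnishes a unique weak solution $\tilde u\in W^{1,2}(\Omega;\CC)$ of \eqref{eq1.00sk}, so it suffices to show that the $u$ defined by \eqref{eqM1.e} equals $\tilde u$ a.e. Given $\psi\in\mathcal{C}^\infty_c(\Omega;\CC)$, set $v(x):=\int_\Omega\overline N(y,x)\psi(y)\,dy$, which by part iii) of the definition (applied to $N$) is the $W^{1,2}(\Omega;\CC)$ solution of $-\Lt v=\psi$ with homogeneous Neumann data. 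Using $v$ as a test function in \eqref{eq1.05hh} for $\tilde u$, and conjugating the identity \eqref{eq1.07tt} (with right-hand side $\psi$) tested against $\tilde u$, one finds $\int_\Omega\tilde u\,\overline\psi\,dx=B(\tilde u,v)=\int_\Omega(f\overline v-F\cdot\overline{\nabla v})\,dx+\int_{\partial\Omega}g\overline v\,d\sigma$. Now substitute the formula for $v$, interchange the order of integration --- legitimate by the bounds i), ii) and iv) for $N^*(\cdot,y)=\overline N(y,\cdot)$ and the trace theorem, all uniform for $y$ in $\supp\psi$ --- and relabel variables to read off $\int_\Omega\tilde u\,\overline\psi\,dx=\int_\Omega u\,\overline\psi\,dx$. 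Since $\psi$ is arbitrary, $u=\tilde u$ a.e., which is the claim.

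The genuinely laborious step is the first one --- showing that the regularized Neumann functions $N_\rho(\cdot,y)$ satisfy the scale-invariant estimates i)--vii) uniformly in $\rho$. This is where the Gr\"uter--Widman truncation/iteration must be carried out in the complex-valued, non-self-adjoint setting (taking real parts at every energy estimate to neutralize the $ik$-term), and where the interior bounds of Lemma~\ref{lem1.1ap} have to be combined with the $L^p$ bound at the correct scale $s\simeq|x-y|$ to reach vi) and vii). Everything afterwards --- the weak-compactness limit, the symmetry identity, and the representation formula --- consists of now-standard duality manipulations.
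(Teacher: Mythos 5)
Your overall architecture---approximate Neumann functions with concentrated right-hand sides, uniform scale-invariant estimates, a weak-compactness limit, and duality manipulations for the symmetry \eqref{eq3.01mq} and the representation formula \eqref{eqM1.e}---coincides with the paper's (which follows Hofmann--Kim and Choi--Kim), and those later steps of yours are sound. The gap is exactly in the step you yourself identify as the crux: the uniform bounds i)--vii) for $N_\rho(\cdot,y)$. You propose to obtain the weak-type bound iii) by the Gr\"uter--Widman truncation and then to get ii) and i) ``by integrating the distribution function.'' The truncation argument of \cite{GW82} is tailored to real scalar equations with Dirichlet data: it exploits the positivity of the Green function (maximum principle) to make test functions of the form $(1/t-1/G)_+$ admissible, and it closes the level-set estimate with the Sobolev inequality for $W^{1,2}_0$ functions. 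Here $N_\rho(\cdot,y)$ is complex-valued (the $-ik$ term makes the problem effectively a $2\times 2$ system), so there is no positivity; the real-valued truncation $(\abs{N_\rho}-t)_+$ you write is not even a meaningful test function for the complex form, and the natural substitute $(\abs{N_\rho}-t)_+ N_\rho/\abs{N_\rho}$ leads, after taking real parts, to a bound on $\nabla\log\abs{N_\rho}$ on level sets whose use requires a Sobolev inequality $\norm{w}_{L^{2d/(d-2)}}\le C\norm{\nabla w}_{L^2}$ for a function with no vanishing trace---unavailable for the Neumann problem without extra normalization. None of this is addressed, and it is precisely because the truncation scheme does not transfer to this setting that the paper assumes $\gamma\in\mathcal{C}^0(\overline\Omega)$, proves the local boundedness Lemma~\ref{lem1.1ap}, and gets the key bound near the pole by duality: test against adjoint solutions with data supported in $B_R(y)$, use \eqref{eq1.14vt} together with the $W^{1,2}$ a priori bound to get $\norm{u}_{L^\infty(B_{R/2}(y))}\le CR^2\norm{f}_{L^\infty}$, deduce $\norm{N^\epsilon(\cdot,y)}_{L^1(B_R(y))}\le CR^2$, and then the pointwise bound vi) by interior local boundedness at scale $\simeq\abs{x-y}$.

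Second, even granting a weak-type bound iii), your logical order cannot deliver i): the exponents in i) ($L^{2d/(d-2)}$ for $N$ and $L^2$ for $\nabla N$ on $\Omega\setminus B_r(y)$) sit at or above the weak exponents $d/(d-2)$ and $d/(d-1)$, so integrating the distribution function diverges, and the truncation scheme gives no direct route to the gradient bounds iv)--v) (``in the same spirit'' hides this). In the paper, and in \cite{HK07}, the implication runs the other way: the pointwise bound near the pole plus a Caccioppoli inequality with a cutoff vanishing on $B_{r/2}(y)$ (where taking real parts kills the $ik$ term, as you note) yields i), and then iii), v) and the strong bounds ii), iv) are derived from i). So your chain ``iii) $\Rightarrow$ ii), i)'' should be replaced by: $L^1$ bound near the pole by duality $\Rightarrow$ vi) $\Rightarrow$ Caccioppoli $\Rightarrow$ i) $\Rightarrow$ iii), v) $\Rightarrow$ ii), iv). Your treatment of vi)--vii) via Lemma~\ref{lem1.1ap}, and everything after the uniform bounds (limit extraction, the symmetry identity, the representation formula with the Fubini justification), matches the paper.
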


\begin{proof}
We follow the strategy used in \cite{CK11}, which in turn is based
on \cite{HK07}. Let us fix a function $\Phi \in
\mathcal{C}_c^\infty(\RR^d)$ such that $\Phi$ is supported in
$B_1(0)$, $0\le \Phi \le 2$, and $\int_{\RR^d} \Phi \;dx=1$. Let
$y\in \Omega$ be fixed but arbitrary. For any $\epsilon>0$, we
define
\[
\Phi_\epsilon(x)=\epsilon^{-d}\Phi((x-y)/\epsilon).
\]
Let $v_{\epsilon, y}$ be the unique weak solution in
$W^{1,2}(\Omega;\CC)$ of problem
\begin{equation}                    \label{eq4.0ik}
\left\{
\begin{aligned}
-L v = \Phi_\epsilon \quad &\text{in }\;\Omega,\\
\gamma \nabla v \cdot  n =0 \quad &\text{on }\; \partial\Omega.
\end{aligned}
\right.
\end{equation}
We define the ``averaged Neumann function'' $N^\epsilon(\cdot,y)$ by
\[
N^\epsilon(\cdot,y)=v=v_{\epsilon,y}.
\]
Then $N^\epsilon(\cdot,y)$ satisfies the following identity (c.f. \eqref{eq1.05hh}):
\begin{equation}                \label{eq4.1tv}
\int_\Omega (\gamma \nabla N^\epsilon(\cdot,y) \cdot \overline{
\nabla \phi} + ik N^\epsilon(\cdot,y)\overline \phi)\,dx =
\int_{\Omega\cap B_\epsilon(y)} \Phi_\epsilon \overline \phi
\,dx,\quad \forall \phi \in W^{1,2}(\Omega;\CC).
\end{equation}
By taking $\phi=N^\epsilon(\cdot,y)=v$ in \eqref{eq4.1tv}, we get
\[
\int_\Omega \gamma \abs{\nabla v}^2 dx = \Re \int_\Omega (\gamma
\abs{\nabla v}^2 + i k \abs{v}^2) \,dx  = \Re \int_{\Omega \cap
B_\epsilon(y)} \Phi_\epsilon \overline v\,dx \le C
\epsilon^{(2-d)/2}\norm{v}_{W^{1,2}(\Omega)},
\]
where the last inequality follows from the Sobolev embedding, namely,
 $$
 \left| \int_{\Omega \cap B_\epsilon(y)} \Phi_\epsilon \overline v\,dx \right| \le C \norm{\Phi_\epsilon}_{L^{2d/(d+2)}(B_\epsilon(y))}\norm{v}_{W^{1,2}(\Omega)} \le C \epsilon^{(2-d)/2}\norm{v}_{W^{1,2}(\Omega)} .
 $$
Similarly, we get
\[
\int_\Omega k \abs{v}^2 dx = \Im \int_\Omega (\gamma \abs{\nabla
v}^2 + i k \abs{v}^2) \,dx  = \Im \int_ {\Omega \cap
B_\epsilon(y)} \Phi_\epsilon \overline v\,dx \le C
\epsilon^{(2-d)/2}\norm{v}_{W^{1,2}(\Omega)}.
\]
Therefore, we have
\begin{equation}        \label{eqG-02}
\norm{N^\epsilon(\cdot,y)}_{W^{1,2}(\Omega)}\le C \epsilon^{(2-d)/2},
\end{equation}
where $C=C(d,\nu, k_0)$.

Let $R\in (0,d_y)$ be arbitrary, but fixed. Assume that $f \in
\mathcal{C}_c^\infty(\Omega;\CC)$ is supported in $B_R=B_R(y)
\subset \Omega$. Let $u$ be a unique weak solution in
$W^{1,2}(\Omega;\CC)$ of problem \eqref{eq2.10yq}. We then have
the following identity (c.f. \eqref{eq1.07tt}):
\begin{equation}                \label{eq4.2ym}
\int_\Omega (\gamma \nabla w \cdot \overline{\nabla u}+ ik w
\overline u)\,dx =\int_\Omega w \overline f\,dx,\quad \forall w
\in W^{1,2}(\Omega;\CC).
\end{equation}
Then by setting $\phi=u$ in \eqref{eq4.1tv} and setting $w=N^\epsilon(\cdot,y)=v$ in \eqref{eq4.2ym}, we get
\begin{equation}\label{eq4.3wx}
\int_\Omega N^\epsilon(x,y) \overline{f(x)} \,dx= \int_ {\Omega\cap B_\epsilon(y)} \Phi_\epsilon \overline u \, dx.
\end{equation}
Also, by taking $w=u$ in \eqref{eq4.2ym}, we see that
\[
\int_\Omega \gamma \abs{\nabla u}^2\,dx +ik\int_\Omega
\abs{u}^2\,dx=\int_\Omega u\overline f\,dx.
\]
Taking the real and imaginary parts in the above and using the
Sobolev embedding  and H\"older's inequality
\begin{align*}
\int_\Omega \gamma \abs{\nabla u}^2\,dx &= \Re \int_\Omega u \overline f\,dx \le C \norm{f}_{L^{2d/(d+2)}(\Omega)}\norm{u}_{W^{1,2}(\Omega)},\\
k \int_\Omega \abs{u}^2\,dx &= \Im \int_\Omega u \overline f\,dx \le C \norm{f}_{L^{2d/(d+2)}(\Omega)}\norm{u}_{W^{1,2}(\Omega)}.
\end{align*}
Therefore, we obtain
\begin{equation}                    \label{eq4.4ur}
\norm{u}_{W^{1,2}(\Omega)}\le C \norm{f}_{L^{2d/(d+2)}(\Omega)},
\end{equation}
where $C=C(d,\nu, k_0)$. From \eqref{eq1.14vt} in Lemma~\ref{lem1.1ap} with $p_0=2d/(d-2)$,
it follows that
\[
\norm{u}_{L^\infty(B_{R/2})} \le C \left(R^{1-d/2} \norm{u}_{L^{2d/(d-2)}(\Omega)}+ R^2 \norm{f}_{L^\infty(B_R)}\right).
\]
Furthermore,  \eqref{eq4.4ur} yields
\[
\norm{u}_{L^{2d/(d-2)}(\Omega)} \le C
R^{1+d/2}\norm{f}_{L^\infty(B_R)},
\]
provided that $f$ is supported in $B_R$. Therefore, by combining
the above two inequalities, we have
\begin{equation}                    \label{eq:starbucks}
\norm{u}_{L^\infty(B_{R/2})} \le C R^2 \norm{f}_{L^\infty(B_R)},
\end{equation}
where $C$ depends on $d, \nu, \Omega$, and $\theta$. By
\eqref{eq4.3wx} and \eqref{eq:starbucks}, we find that for all
$\epsilon \in (0,R/2)$ and $R\in(0,d_y)$,
\[
\Biggabs{\int_{B_R} N^\epsilon(\cdot,y)\overline f\,dx} \le C
R^{2} \norm{f}_{L^\infty(B_R)},\;\; \forall f\in
\mathcal{C}_c^\infty(B_R;\CC).
\]
Therefore, by duality, we conclude that
\[
\norm{N^\epsilon (\cdot,y)}_{L^1(B_R(y))}\le C R^2,\quad \forall \epsilon \in(0,R/2),\;\;  \forall R\in(0,d_y).
\]

Now, for any $x\in\Omega$ such that $0<\abs{x-y}<d_y/2$, let us take $R:=2\abs{x-y}/3$.
Notice that if $\epsilon<R/2$, then $N^\epsilon(\cdot,y)\in W^{1,2}(B_R(x))$ and satisfies $-L N^\epsilon(\cdot,y)= 0$ in $B_R(x)$.
Then by \eqref{eq1.14vt} in Lemma~\ref{lem1.1ap}, we have
\[
\abs{N^\epsilon(x,y)} \le C r^{-d} \norm{N^\epsilon(\cdot,y)}_{L^1(B_r(x))} \le C r^{-d} \norm{N^\epsilon(\cdot,y)}_{L^1(B_{3r}(y))}\le C r^{2-d}.
\]
We have thus shown that for any $x, y\in\Omega$ satisfying $0<\abs{x-y}<d_y/2$, we have
\begin{equation}        \label{eq4.5ac}
\abs{N^\epsilon(x,y)} \le C \abs{x-y}^{2-d},\quad \forall \epsilon <\abs{x-y}/3.
\end{equation}

Next, fix $r\in (0,d_y/2)$ and $\epsilon \in(0,r/6)$.
Let $\eta$ be a smooth function on $\RR^d$ satisfying
\begin{equation}            \label{eq4.19h}
0\le \eta\le 1,\quad \eta\equiv 1\,\text{ on }\,\RR^d\setminus B_r(y),\quad \eta\equiv 0\,\text{ on }\, B_{r/2}(y),\quad\text{and}\quad \abs{\nabla\eta} \le 4/r.
\end{equation}
We set $\phi=\eta^2 v=\eta^2 N^\epsilon(\cdot, y)$ in \eqref{eq4.1tv} to get
\[
\int_\Omega \gamma \eta^2 \abs{\nabla v}^2 \,dx +ik \int_\Omega \eta^2 \abs{v}^2\,dx= -\int_\Omega
2\gamma \eta \overline v \nabla v \cdot \nabla \eta  \,dx.
\]
By taking the real part in the above and using Cauchy's inequality, we get (c.f. \eqref{eq1.09ww})
\[
\int_\Omega \gamma \eta^2 \abs{\nabla N^\epsilon(x, y)}^2 \,dx \le  4\int_\Omega \gamma \abs{\nabla \eta}^2
\abs{N^\epsilon(x, y)}^2\,dx.
\]
We then use \eqref{eq4.5ac} to obtain
\[
\int_\Omega \eta^2 \abs{\nabla N^\epsilon(x, y)}^2\,dx \le C r^{-2} \int_{B_r(y)\setminus B_{r/2}(y)} \abs{x-y}^{2(2-d)}\,dx\le  C r^{2-d}.
\]
Therefore, for all $0<\epsilon<r/6$, we have
\[
\norm{\nabla N^\epsilon(\cdot,y)}_{L^2(\Omega\setminus B_r(y))}\le C r^{(2-d)/2}.
\]
In the case when $\epsilon \ge r/6$, we obtain from \eqref{eqG-02} that
\[
\norm{\nabla N^\epsilon(\cdot,y)}_{L^2(\Omega\setminus B_r(y))} \le \norm{\nabla N^\epsilon(\cdot,y)}_{L^2(\Omega)} \le C r^{(2-d)/2}.
\]
By combining the above two inequalities, we obtain
\begin{equation}                            \label{eqG-14}
\norm{\nabla N^\epsilon(\cdot,y)}_{L^2(\Omega\setminus B_r(y))}\le C r^{(2-d)/2},\quad \forall r\in(0,d_y/2),\quad \forall \epsilon>0.
\end{equation}
Observe that \eqref{eq4.5ac} also implies
\[
\norm{N^\epsilon(\cdot,y)}_{L^{2d/(d-2)}(\Omega\setminus B_r(y))} \le C r^{(2-d)/2},\quad \forall \epsilon \in (0,r/6).
\]
On the other hand, if $\epsilon\ge r/6$, then \eqref{eqG-02} implies
\[
\norm{N^\epsilon(\cdot,y)}_{L^{2d/(d-2)}(\Omega\setminus B_{r}(y))} \le C \norm{N^\epsilon(\cdot,y)}_{W^{1,2}(\Omega)} \le C r^{(2-d)/2}.
\]
By combining the above two estimates, we obtain
\begin{equation}                            \label{eqG-20}
\norm{ N^\epsilon(\cdot,y)}_{L^{2d/(d-2)}(\Omega\setminus B_r(y))} \le C r^{(2-d)/2}, \quad \forall r \in (0,d_y/2),\quad \forall \epsilon>0.
\end{equation}
From the obvious fact that $d_y/2$ and $d_y$ are comparable to
each other, we find by \eqref{eqG-14} and \eqref{eqG-20} that for
all $0<r<d_y$ and $\epsilon>0$, we have
\begin{equation}                    \label{eq4.11bs}
\norm{N^\epsilon(\cdot,y)}_{L^{2d/(d-2)}(\Omega\setminus B_r(y))}+\norm{\nabla N^\epsilon(\cdot,y)}_{L^2(\Omega\setminus B_r(y))}\le C r^{(2-d)/2}.
\end{equation}
From \eqref{eq4.11bs} it follows that (see \cite[pp.
147--148]{HK07})
\begin{align}
\label{eq4.12eq}
\abs{\set{x\in\Omega:\abs{N^\epsilon(x,y)}>t}}\le C t^{-d/(d-2)},\quad  \forall t> d_y^{2-d},\;\; \forall \epsilon>0,\\
\label{eq4.13tz}
\abs{\set{x\in\Omega:\abs{\nabla_x N^\epsilon(x,y)}>t}} \le C t^{-d/(d-1)},\quad \forall t> d_y^{1-d},\;\; \forall \epsilon>0.
\end{align}
It is routine to derive the following strong type estimates from
the above weak type estimates \eqref{eq4.12eq} and
\eqref{eq4.13tz} (see, for instance, \cite[p.~148]{HK07}):
\begin{align}
\label{eq4.14mq}
\norm{N^\epsilon(\cdot,y)}_{L^p(B_r(y))}\le C r^{2-d+d/p},\;\; \forall r\in (0,d_y), \; \forall \epsilon>0,\;\forall p \in[1,\tfrac{d}{d-2}),\\
\label{eq4.15tw}
\norm{\nabla N^\epsilon(\cdot,y)}_{L^p(B_r(y))}\le C r^{1-d+d/p},\;\; \forall r\in (0,d_y),\; \forall \epsilon>0,\;\forall p \in[1,\tfrac{d}{d-1}).
\end{align}

From \eqref{eqG-14}, \eqref{eq4.14mq}, and \eqref{eq4.15tw}, it
follows that  there exists a sequence $\{\epsilon_n
\}_{n=1}^\infty$ tending to zero and a function $N(\cdot,y)$ such
that $N^{\epsilon_n}(\cdot,y)$ converges to $N(\cdot,y)$ weakly
in $W^{1,p}(B_r(y))$ for $1<p<d/(d-1)$ and all $r\in(0,d_y)$ and
also that $N^{\epsilon_n}(\cdot,y)$ converges to $N(\cdot,y)$
weakly in $W^{1,2}(\Omega\setminus B_r(y))$ for all $r\in(0,d_y)$;
see \cite[p.~159]{HK07} for the details. Then it is routine to
check that $N(\cdot,y)$ satisfies the properties i) and ii) at the
beginning of Section~\ref{sec:nf}, and also the estimates i) -- v)
in the theorem; see \cite[Section~4.1]{HK07}.

We now turn to the pointwise bound for $N(x,y)$.
For any $x\in\Omega$ such that $0<\abs{x-y}<d_y/2$, set $R:=2\abs{x-y}/3$.
Notice that \eqref{eq4.11bs} implies that $N(\cdot,y)\in W^{1,2}(B_R(x))$ and satisfies $-L N(\cdot,y)= 0$ weakly in $B_R(x)$.
Then, by \eqref{eq1.14vt} in Lemma~\ref{lem1.1ap} and the estimate ii) in the theorem, we have
\[
\abs{N(x,y)} \le C R^{-d} \norm{N(\cdot,y)}_{L^1(B_R(x))} \le C R^{-d} \norm{N(\cdot,y)}_{L^1(B_{3R}(y))} \le C\abs{x-y}^{2-d}.
\]
We have thus shown that the estimate vi) in the theorem holds.
Then, it is routine to see that the estimate vii) in the theorem follows from \eqref{eq1.13vv} in Lemma~\ref{lem1.1ap} and the above estimate.

Next, let $x\in \Omega\setminus\set{y}$ be fixed but arbitrary,
and let $\tilde N^{\epsilon'}(\cdot,x)\in W^{1,2}(\Omega;\CC)$ be the
averaged Neumann function of the adjoint operator $\Lt$ in $\Omega$, where $0<\epsilon'<d_x$.
Then we have
\begin{equation}\label{eq1.36tv}
\int_{\Omega} (\gamma \nabla \tilde N^{\epsilon'}(z,x) \cdot
\overline{ \nabla \psi}(z) - ik \tilde N^{\epsilon'}(z,x)\overline
\psi(z)) \,dz = \int_ {\Omega\cap B_{\epsilon'}(x)}
\Phi_{\epsilon'}(z) \overline \psi (z) \,dz,
\end{equation}
for all $\psi \in W^{1,2}(\Omega;\CC)$. By setting $\phi=\tilde N^{\epsilon'}(\cdot,x)$ in \eqref{eq4.1tv} and $\psi=N^\epsilon(\cdot,y)$ in \eqref{eq1.36tv} and then taking complex conjugate, we obtain
\[
\int_ {\Omega\cap B_{\epsilon'}(x)} \Phi_{\epsilon'} N^\epsilon(\cdot,y)\,dz= \int_ {\Omega\cap B_\epsilon(y)} \Phi_\epsilon \overline{\tilde N^{\epsilon'}(\cdot,x)}\,dz.
\]
Let $N^*(\cdot,x)$ be a Neumann function of $\Lt$ in $\Omega$ obtained from $\tilde N^{\epsilon_m}(\cdot,x)$, where  $\set{\epsilon_m}_{m=1}^\infty$ is a sequence tending to $0$.
Then, by following the same steps as in \cite[p.~151]{HK07}, we conclude
\[
N(x,y)=\overline {N^*(y,x)},
\]
which obviously implies the identity \eqref{eq3.01mq}.
We remark that by following similar lines of reasoning as in \cite[p.~151]{HK07}, we find
\[
N^\epsilon(x,y)=\epsilon^{-d}\int_{\Omega}\Phi\left(\frac{z-y}{\epsilon}\right) N(x,z)\,dz,
\]
and thus we have in fact the following pointwise convergence:
\begin{equation}
\label{eq4.17ht}
\lim_{\epsilon \to 0} N^\epsilon(x,y)= N(x,y), \quad \forall x,y\in\Omega,\;\; x\ne y.
\end{equation}

Now, let $u$ be the unique solution in $W^{1,2}(\Omega;\CC)$ of
problem \eqref{eq2.10yq} with $f\in
\mathcal{C}_c^\infty(\Omega;\CC)$. By Lemma~\ref{lem1.1ap}, we
find that $u$ is continuous in $\Omega$. By setting
$w=N^\epsilon(\cdot,y)$ in \eqref{eq4.2ym} and setting $\phi= u$
in \eqref{eq4.1tv}, we get
\[
\int_{\Omega} N^\epsilon(x,y) \overline f(x) \,dx = \int_ {\Omega\cap B_\epsilon(y)} \Phi_\epsilon \overline u\,dx.
\]
We take the limit $\epsilon\to 0$ above and then take complex conjugate to get
\[
u(y)=\int_\Omega \overline N(x,y) f(x)\,dx,
\]
which is equivalent to \eqref{eq2.9x}. We have shown that $N(x,y)$
satisfies the property iii) in Section~\ref{sec:nf}, and thus that
$N(x,y)$ is the unique Neumann function of the operator $L$ in
$\Omega$.

Finally, let $f\in L^q(\Omega;\CC)$ with $q>d/2$ and $g\in
L^2(\partial\Omega;\CC)$, and let $u$ be the unique weak solution
in $W^{1,2}(\Omega;\CC)$ of problem \eqref{eq1.00sk}; see
Section~\ref{sec:nbp}. Then $u$ satisfies the identity
\eqref{eq1.05hh}. By setting $v=\tilde N^{\epsilon'}(\cdot,x)$ in
\eqref{eq1.05hh} and setting $\psi=u$ in \eqref{eq1.36tv}, we get
\[
\int_\Omega \left( \overline{\tilde N^{\epsilon'}(z,x)} -\overline{\nabla \tilde N^{\epsilon'}(z,x)} \right) f(z) \,dz + \int_{\partial\Omega} \overline{\tilde N^{\epsilon'}(z,x)} g(z) \,d\sigma(z) = \int_ {\Omega\cap B_{\epsilon'}(x)} \Phi_{\epsilon'} u \,dz.
\]
By Lemma~\ref{lem1.1ap}, we again find that $u$ is H\"older continuous in $\Omega$.
Then by proceeding similarly as above and using \eqref{eq3.01mq}, we obtain
\[
u(x)=\int_\Omega\left( N(x,y) f(y)- \nabla_y N(x,y)\cdot F(y)\right)\,dy+\int_\Omega N(x,y) g(y)\,d\sigma(y),
\]
which is the formula \eqref{eqM1.e}.
The proof is complete. \qed
\end{proof}

%
\subsection{$L^p$ estimates}
%

We now assume that $\Omega$ is a bounded $\mathcal{C}^1$ domain.
In the following lemma we obtain $L^p$ estimates for the operator
$L$ with uniformly continuous coefficient $\gamma$.
\begin{lem}             \label{lem2.2ap}
Let $\Omega\subset \RR^d$ be a bounded $\mathcal{C}^1$ domain and
assume that $\gamma \in \mathcal{C}^0(\overline\Omega)$. Let
$q\in(1,d)$, $p\in(1,\infty)$, and $s=\min(q^*,p)$, where
$q^*=dq/(d-q)$. For each $f\in L^q(\Omega;\CC)$ and $F\in
L^p(\Omega;\CC^d)$, there is a unique weak solution $u\in
W^{1,s}(\Omega)$ to
\begin{equation}\label{eq2.00sk}
\left\{
\begin{aligned}
-L u= f +\nabla\cdot F \quad &\mbox{in }\;\Omega,\\
(\gamma \nabla u + F)\cdot n=0 \quad &\mbox{on }\;\partial\Omega.
\end{aligned}
\right.
\end{equation}
Moreover, the following estimate holds:
\begin{equation}                    \label{eq2.47rm}
\norm{u}_{W^{1,s}(\Omega)} \le C\left(\norm{f}_{L^q(\Omega)}+\norm{F}_{L^p(\Omega)}\right),
\end{equation}
where $C$ depends on $d, \nu, k_0, p, q, \Omega$, and $\theta$.
\end{lem}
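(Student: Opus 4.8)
The plan is to first establish the a priori bound \eqref{eq2.47rm} (for a weak solution assumed to exist), then deduce existence by approximation, and finally uniqueness by duality. Throughout, split $u=u_1+u_2$, where $u_1$ solves $-Lu_1=f$ with $(\gamma\nabla u_1)\cdot n=0$ and $u_2$ solves $-Lu_2=\nabla\cdot F$ with $(\gamma\nabla u_2+F)\cdot n=0$; the two pieces require genuinely different estimates.

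For $u_1$ one may assume $f\in\mathcal{C}_c^\infty(\Omega;\CC)$ by density, the constant below being independent of $f$. By property iii) of a Neumann function applied to $N^*$, together with the symmetry \eqref{eq3.01mq}, $u_1(x)=\int_\Omega N(x,y)f(y)\,dy$, so $\nabla u_1(x)=\int_\Omega\nabla_x N(x,y)f(y)\,dy$. Estimates ii)--v) of Theorem~\ref{thm1ap}, the global bound \eqref{globalb}, and the symmetry \eqref{eq3.01mq} together show that $N(x,y)$ and $\nabla_x N(x,y)$ belong, uniformly in each of their variables over $\Omega$, to the weak-Lebesgue spaces $L^{d/(d-2),\infty}$ and $L^{d/(d-1),\infty}$; the weak-type Young inequality then gives $\norm{u_1}_{W^{1,q^*}(\Omega)}\le C\norm{f}_{L^q(\Omega)}$ with $1/q^*=1/q-1/d$. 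Since $\Omega$ is bounded and $s=\min(q^*,p)\le q^*$, this yields $\norm{u_1}_{W^{1,s}(\Omega)}\le C\norm{f}_{L^q(\Omega)}$.

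The bound $\norm{u_2}_{W^{1,p}(\Omega)}\le C\norm{F}_{L^p(\Omega)}$ for every $p\in(1,\infty)$ is the heart of the matter. For $p=2$ it is the energy estimate behind the Lax--Milgram argument of subsection~\ref{sec:nbp} (here $k\ge k_0>0$ bounds $\norm{u_2}_{L^2}$ as well). For $p>2$ we freeze the coefficient: near $x_0\in\overline\Omega$, with $\gamma_0=\gamma(x_0)$ and $L_0$ as in \eqref{eq1.14tm}, on a small ball $B_r(x_0)$ one has $-L_0 u_2=\nabla\cdot F+\nabla\cdot\bigl((\gamma-\gamma_0)\nabla u_2\bigr)$, and one compares $u_2$ with the solution of the constant-coefficient problem for $L_0$---posed on the whole space in the interior, and on a half-space with the Neumann condition near $\partial\Omega$ (this is where the $\mathcal{C}^1$ regularity of $\partial\Omega$ enters, through flattening the boundary and freezing again)---for which the sharp estimate $\norm{\nabla w}_{L^p}\le C\norm{G}_{L^p}$, valid for all $p\in(1,\infty)$ and uniformly in $k\ge k_0$, is classical. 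The oscillation term $\nabla\cdot\bigl((\gamma-\gamma_0)\nabla u_2\bigr)$ has operator norm at most $C\theta(r)$ and is absorbed once $r$ is small (depending only on $\theta$), while the lower-order contribution is controlled by Sobolev's inequality and a standard iteration; a finite-overlap cover of $\overline\Omega$ by balls of this fixed radius, together with the $p=2$ bound, gives the a priori estimate and, since every $F\in L^p(\Omega)$ with $p\ge2$ lies in $L^2(\Omega)$, existence for $p\ge2$. For $1<p<2$ the estimate and existence follow by duality from the case $p'>2$ applied to $\Lt=\nabla\cdot\gamma\nabla+ik$ (which has the same structure as $L$): testing the equation for $u_2$ against the solution $v\in W^{1,p'}(\Omega)$ of $-\Lt v=\nabla\cdot\Psi$, and conversely, gives $\int_\Omega\nabla u_2\cdot\overline{\Psi}\,dx=\int_\Omega F\cdot\overline{\nabla v}\,dx$, whence $\lvert\int_\Omega\nabla u_2\cdot\overline{\Psi}\,dx\rvert\le C\norm{F}_{L^p}\norm{\Psi}_{L^{p'}}$, and taking the supremum over $\Psi\in\mathcal{C}_c^\infty(\Omega;\CC^d)$ with $\norm{\Psi}_{L^{p'}}\le1$ closes the argument (the $\norm{u_2}_{L^p}$ part being handled the same way). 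I expect this freezing step---the full-range Calder\'on--Zygmund estimate for the Neumann problem up to a merely $\mathcal{C}^1$ boundary---to be the main obstacle; alternatively one may cite the known $L^p$ theory of Neumann problems on $\mathcal{C}^1$ domains with VMO principal part, since a $\gamma$ uniformly continuous on $\overline\Omega$ has vanishing mean oscillation.

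For existence in general, choose smooth $f_j\to f$ in $L^q$ and $F_j\to F$ in $L^p$, solve $-Lu_j=f_j+\nabla\cdot F_j$ with homogeneous Neumann data by Lax--Milgram to get $u_j\in W^{1,2}(\Omega)$, and apply the a priori bound (valid for $L$ and, by the identical argument, for $\Lt$) to $u_j-u_\ell$: the sequence is Cauchy in $W^{1,s}(\Omega)$, and its limit $u$ is a weak solution of \eqref{eq2.00sk} obeying \eqref{eq2.47rm}. For uniqueness, let $u\in W^{1,s}(\Omega)$ solve the homogeneous problem. Given $\Psi\in\mathcal{C}_c^\infty(\Omega;\CC^d)$, let $v\in W^{1,s'}(\Omega)$ solve $-\Lt v=\nabla\cdot\Psi$ with homogeneous Neumann data (which exists, with a bound, by what has just been established for $\Lt$); since $\mathcal{C}^\infty(\overline\Omega;\CC)$ is dense in both $W^{1,s'}(\Omega)$ and $W^{1,s}(\Omega)$ and the two weak formulations are continuous in the test function, one uses $v$ as a test function for $u$ and $u$ as a test function for $v$ and subtracts to get $\int_\Omega\nabla u\cdot\overline{\Psi}\,dx=0$ for every such $\Psi$. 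Hence $\nabla u\equiv 0$, so $u$ is constant on each component of $\Omega$, and then $-Lu=iku=0$ with $k>0$ forces $u\equiv 0$.
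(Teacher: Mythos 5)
Your treatment of the divergence part ($u_2$) is in the spirit of what the paper does: it simply invokes the Krylov-type $L^p$ theory (freezing the coefficient, flattening the $\mathcal{C}^1$ boundary, absorbing the $\mathcal{C}^0$ oscillation for small radii, duality for $1<p<2$), so that half of your proposal is essentially the standard argument the paper cites. The genuine gap is in your treatment of the $f$ part. You estimate $u_1(x)=\int_\Omega N(x,y)f(y)\,dy$ by a weak-type Young inequality, using the global pointwise bound \eqref{globalb} and uniform-in-$y$ weak-$L^{d/(d-2)}$ and weak-$L^{d/(d-1)}$ bounds for $N$ and $\nabla_x N$ over all of $\Omega$. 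But at the point where Lemma~\ref{lem2.2ap} is proved, only Theorem~\ref{thm1ap} is available, and its estimates are interior ones: they hold for $r\in(0,d_y)$ and for $t>d_y^{2-d}$, $t>d_y^{1-d}$, and hence degenerate as $y\to\partial\Omega$; nothing uniform up to the boundary is known yet. The global bound \eqref{globalb} is exactly \eqref{eq2.17dc} of Theorem~\ref{thm2ap}, whose proof goes through Lemma~\ref{lem:lb}, which is itself proved by applying Lemma~\ref{lem2.2ap}. So your argument for $u_1$ is circular within the paper's architecture, and it cannot be repaired from Theorem~\ref{thm1ap} alone, since the kernel bounds you need are not uniform in $y$ near $\partial\Omega$ (there is also a secondary looseness: the Schur/weak-Young test needs control of $\nabla_x N(x,\cdot)$ as a function of the \emph{second} variable, which requires an extra duality/symmetry step via $N^*$ that you only gesture at).

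The paper avoids the Neumann function entirely for the $f$ term: it observes $L^q(\Omega)\subset W^{-1,q^*}_0(\Omega)$, solves the auxiliary Neumann problem $\Delta v=f-\frac{1}{|\Omega|}\int_\Omega f$, $\partial v/\partial n=0$, on the $\mathcal{C}^1$ domain by \cite{FMM}, obtaining $\norm{\nabla v}_{L^{q^*}}\le C\norm{f}_{L^q}$, and then absorbs $\nabla v+\bigl(\tfrac{1}{d|\Omega|}\int_\Omega f\,dy\bigr)x$ into $F$, reducing everything to the pure divergence-data estimate \eqref{eq2.48fc} with exponent $s$. If you replace your $u_1$ step by this reduction (or by some other argument that does not presuppose global Neumann-function bounds), the rest of your proposal — the freezing-coefficient sketch, the approximation argument for existence, and the duality argument for uniqueness — goes through.
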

\begin{proof}
Note that in the case when $f\equiv 0$, the proof for estimate
\eqref{eq2.47rm} reduces to
\begin{equation}\label{eq2.48fc}
\norm{u}_{W^{1,p}(\Omega)} \le C\norm{F}_{L^p(\Omega)}.
\end{equation}
In this case the proof for the existence and uniqueness of weak
solution $u\in W^{1,p}(\Omega)$ as well as the estimate
\eqnref{eq2.48fc} follow essentially from  the same argument as in
\cite{Krylov08}.

We consider the case when $f$ is not identically zero.
Observe that $L^q(\Omega) \subset W^{-1,q^*}_0(\Omega)$ with the estimate
\[
\norm{f}_{W^{-1,q^*}(\Omega)} \le C \norm{f}_{L^q(\Omega)},\quad \text{where }\;C=C(d,\Omega).
\]
Then by \cite[Corollary~9.3]{FMM}, there exists a unique weak solution $v$ in $W^{1,q^*}(\Omega)$ of the Neumann problem
\[
\left\{
\begin{aligned}
\Delta v = f - \frac{1}{|\Omega|} \int_\Omega f \, dy \quad &\text{in }\;\Omega,\\
\partial v/\partial n = 0 \quad &\text{on }\;\partial\Omega,
\end{aligned}
\right.
\]
where $|\Omega|$ is the volume of $\Omega$. Moreover, $v$
satisfies the estimate
\begin{equation}            \label{eq2.50mx}
\norm{\nabla v}_{L^{q^*}(\Omega)} \le C\norm{f -\frac{1}{|\Omega|} \int_\Omega f \, dy}_{W^{-1,q^*}_0(\Omega)} \le
C \norm{f}_{L^q(\Omega)}.
\end{equation}

Then, we apply estimate \eqref{eq2.48fc} with $F+ \nabla v + (
\tfrac{1}{d |\Omega|} \int_\Omega f \,dy )  x $ and $s$ in place of
$F$ and $p$, respectively, and use H\"older's inequality to get
estimate \eqref{eq2.47rm}. 
\qed
\end{proof}

We denote by $L^{p,\infty}(\Omega)$ the usual weak $L^p$ space.
The following lemma is a variant of Lemma~\ref{lem2.2ap} in the weak Lebesgue spaces.

\begin{lem}             \label{lem2.3ap}
Let $\Omega\subset \RR^d$ be a bounded $\mathcal{C}^1$ domain and
assume that $\gamma \in \mathcal{C}^0(\overline\Omega)$. Let $q\in
(1,d)$, $p\in (1,\infty)$, and $s=\min(q^*,p)$, where
$q^*=dq/(d-q)$. If $f\in L^{q,\infty}(\Omega;\CC)$ and $F\in
L^{p,\infty}(\Omega;\CC^d)$, there is a  weak solution $u$ of
problem \eqref{eq2.00sk} that satisfies an estimate
\[
\norm{\nabla u}_{L^{s,\infty}(\Omega)} \le C\left(\norm{f}_{L^{q,\infty}(\Omega)}+\norm{F}_{L^{p,\infty}(\Omega)}\right),
\]
and, for $s<d$, the following estimate as well:
\[
\norm{u}_{L^{s^*,\infty}(\Omega)} \le C\left(\norm{f}_{L^{q,\infty}(\Omega)}+\norm{F}_{L^{p,\infty}(\Omega)}\right).
\]
Moreover, there is uniqueness of weak solutions to
\eqref{eq2.00sk} in the sense that if $\tilde u$ is a solution in
$W^{1,t}(\Omega)$ for some $t>1$, then $u=\tilde u$.
\end{lem}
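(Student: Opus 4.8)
The plan is to deduce the weak-type estimates from the strong-type estimates of Lemma~\ref{lem2.2ap} by a real-interpolation / duality argument, in the spirit of the construction of the Neumann function in Theorem~\ref{thm1ap}. First I would observe that the solution operators built in Lemma~\ref{lem2.2ap} are bounded linear maps: for fixed exponents, the map $(f,F)\mapsto \nabla u$ sends $L^{q_1}\times L^{p_1}\to L^{s_1}$ and $L^{q_2}\times L^{p_2}\to L^{s_2}$ boundedly, and similarly for $(f,F)\mapsto u$ into $L^{s^*}$ when $s<d$. Choosing $q_1<q<q_2$ in $(1,d)$ and $p_1<p<p_2$ in $(1,\infty)$ with the indices arranged so that $q$ (resp.\ $p$) is the real-interpolation midpoint, the Marcinkiewicz interpolation theorem (in its vector-valued, two-parameter form; see \cite[Theorem~1.4.19]{Grafakos} or Bennett--Sharpley) gives boundedness on the intermediate Lorentz spaces $L^{q,\infty}\times L^{p,\infty}\to L^{s,\infty}$ and $\to L^{s^*,\infty}$. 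Since $s=\min(q^*,p)$ and $s^*=(dq^*/(d-q^*)\wedge\ldots)$ behave correctly under interpolation (the min of two affine-in-$1/q,1/p$ quantities interpolates to the min), the exponents match up.

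Next I would address existence: given $f\in L^{q,\infty}$ and $F\in L^{p,\infty}$, I approximate by truncations $f_j=f\mathbf{1}_{\{|f|\le j\}}\in L^q$ and $F_j=F\mathbf{1}_{\{|F|\le j\}}\in L^p$, solve \eqref{eq2.00sk} for each $j$ via Lemma~\ref{lem2.2ap} to get $u_j\in W^{1,s}$, and use the (already-established) uniform bound $\norm{\nabla u_j}_{L^{s,\infty}}+\norm{u_j}_{L^{s^*,\infty}}\le C(\norm{f}_{L^{q,\infty}}+\norm{F}_{L^{p,\infty}})$. A diagonal/weak-compactness extraction in $W^{1,t}_{loc}$ for some $t>1$ (using that $L^{s,\infty}$ embeds into $L^t_{loc}$ for $1<t<s$) yields a limit $u$; passing to the limit in the weak formulation \eqref{eq1.05hh} with $F\mapsto F$, $f\mapsto f$, $g=0$ shows $u$ solves \eqref{eq2.00sk}, and lower semicontinuity of the $L^{s,\infty}$ and $L^{s^*,\infty}$ ``norms'' under weak convergence preserves the estimates.

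Finally, for uniqueness: if $\tilde u\in W^{1,t}(\Omega)$ for some $t>1$ also solves \eqref{eq2.00sk} with the same data, then $v:=u-\tilde u$ is a weak solution of the homogeneous problem $-Lv=0$ in $\Omega$ with $\gamma\nabla v\cdot n=0$ on $\partial\Omega$, in the sense that $\int_\Omega(\gamma\nabla v\cdot\overline{\nabla\phi}+ikv\overline\phi)=0$ for all $\phi\in W^{1,t'}$ (or a suitable dense test class). Testing against the solution of the adjoint problem $-L^*w=g$ with $g\in C^\infty_c(\Omega)$ — which lies in $W^{1,2}$ and, by Lemma~\ref{lem1.1ap} and the $L^p$ theory, in every $W^{1,r}$ — gives $\int_\Omega v\,\overline{g}=0$ for all such $g$, hence $v\equiv 0$. (One must check the test function $w$ is admissible in the weak formulation that $v$ satisfies; this is where the hypothesis $t>1$ is used, together with the higher integrability of $w$ coming from the $C^1$ boundary.)

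The main obstacle I anticipate is the bookkeeping in the two-parameter interpolation: one must verify that for the chosen endpoint exponents the quantities $s=\min(q^*,p)$ and $s^*$ genuinely arise as the interpolated indices (in particular that the ``min'' does not create a corner that interpolation handles badly — it does not, since on each side of the corner $s$ is affine in $(1/q,1/p)$, and one can interpolate separately on each side and glue, or simply choose endpoints lying in the same affine piece). A secondary technical point is making the weak-compactness extraction in Step~2 clean despite the estimates being only in Lorentz/weak spaces rather than reflexive $L^s$; restricting to $W^{1,t}_{loc}$ with $t<s$ and then upgrading the global bound by lower semicontinuity circumvents this.
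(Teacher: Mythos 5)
Your proposal is correct and follows essentially the same route as the paper: the paper deduces the weak-type bounds from the strong-type estimates of Lemma~\ref{lem2.2ap} by applying the interpolation lemma of Dolzmann--M\"uller \cite[Lemma~1]{DM} to the solution operator $T:F\mapsto u$ and to the auxiliary map $f\mapsto v$ of \eqref{eq2.50mx}, which is exactly your real-interpolation argument (with the $f$-contribution routed through the Laplace--Neumann problem rather than a two-parameter interpolation, thereby avoiding the ``corner'' bookkeeping you mention). Your extra truncation step for existence is not needed, since on a bounded domain $L^{q,\infty}\subset L^{q_1}$ for $q_1<q$, so Lemma~\ref{lem2.2ap} already furnishes the solution directly, and your duality argument for uniqueness is the standard one implicit in the paper.
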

\begin{proof}
The lemma follows immediately from Lemma~\ref{lem2.2ap} by applying \cite[Lemma~1]{DM} to the solution operator $T: F\mapsto u$ as well as to the map $f\mapsto v$ in \eqref{eq2.50mx}. \qed
\end{proof}

\begin{lem}\label{lem:lb}
Let $\Omega$ and $\gamma$ satisfy the same assumptions as in
Lemma~\ref{lem2.2ap}. There exists a constant $C_1>0$ such that
the following holds: For any $f \in
\mathcal{C}_c^\infty(\Omega;\CC)$, let $u \in W^{1,2}(\Omega;\CC)$
be the unique weak solution of
\[
\begin{aligned}
\left\{
\begin{aligned}
-L u&= f \;\text{ in }\;\Omega\\
 \gamma \nabla u \cdot  n&= 0 \; \text{ on }\;\partial\Omega
\end{aligned}
\right.
\qquad\;\text{ or }\;\qquad
\left\{
\begin{aligned}
-\Lt  u&= f \quad\text{in }\;\Omega\\
\gamma \nabla u \cdot  n&= 0 \quad \text{on }\;\partial\Omega.
\end{aligned}
\right.
\end{aligned}
\]
Then for all $x\in\Omega$ and $0<R < \diam(\Omega)$, we have
\[
\norm{u}_{L^\infty(\Omega\cap B_{R/2}(x))} \le C_1 \left(R^{-d/2} \norm{u}_{L^2(\Omega\cap B_R(x))}+ R^2 \norm{f}_{L^\infty(\Omega \cap B_R(x))}\right).
\]
The constant $C_1$ depends on $d, \nu, \Omega$, and $\theta$.
\end{lem}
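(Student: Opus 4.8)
The plan is to handle the interior case directly from Lemma~\ref{lem1.1ap}, reduce the boundary case to a small fixed scale by a covering argument, and treat that small‑scale boundary estimate by straightening $\p\Omega$ and reflecting. If $\overline{B_R(x)}\subset\Omega$, apply \eqref{eq1.14vt} with $F\equiv 0$, $\rho=R/2$, $p_0=2$, and the fixed exponent $q=d$; since $\norm{f}_{L^{d}(B_R(x))}\le C R\,\norm{f}_{L^\infty(B_R(x))}$, the forcing term there becomes $CR^2\norm{f}_{L^\infty(B_R(x))}$ and we obtain the assertion with $C=C(d,\nu,\theta)$ and $\Omega\cap B_\rho(x)=B_\rho(x)$. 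For the case $B_R(x)\cap\p\Omega\neq\emptyset$ it suffices to prove the estimate for $R\le 4R_0$, where $R_0=R_0(\Omega)>0$ is a scale below which $\p\Omega$ can be straightened: for $R>4R_0$ one covers $\Omega\cap B_{R/2}(x)$ by a number of balls of radius $R_0/4$ centered in $\Omega\cap B_{R/2}(x)$, bounded in terms of $\Omega$, whose concentric doublings lie inside $B_R(x)$ (as $R\ge 4R_0$), applies the small‑scale estimate on each, and absorbs the harmless factors $R_0^{-d/2}\le C(\Omega)R^{-d/2}$ and $R_0^2\le R^2$.

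Now let $R\le 4R_0$ and $B_R(x)\cap\p\Omega\neq\emptyset$; pick $x_0\in\p\Omega$ with $\abs{x-x_0}<2R$. Since $\p\Omega\in\mathcal{C}^1$ there is a $\mathcal{C}^1$ diffeomorphism $\Psi$ from a neighbourhood $U\ni x_0$ onto $B_1(0)$ with $\Psi(U\cap\Omega)=B_1^+:=B_1(0)\cap\set{y_d>0}$, $\Psi(U\cap\p\Omega)=B_1(0)\cap\set{y_d=0}$, and $\norm{D\Psi}_{L^\infty}+\norm{D\Psi^{-1}}_{L^\infty}$ controlled by $\Omega$. With $\tilde u:=u\circ\Psi^{-1}$ and $\tilde f:=(f\circ\Psi^{-1})\abs{\det D\Psi^{-1}}$, the change of variables in the weak identity $\int_\Omega(\gamma\nabla u\cdot\overline{\nabla v}+ik\,u\overline v)\,dx=\int_\Omega f\overline v\,dx$ turns it into $\int_{B_1^+}(A\nabla\tilde u\cdot\overline{\nabla\varphi}+ik\,b\,\tilde u\,\overline\varphi)\,dy=\int_{B_1^+}\tilde f\,\overline\varphi\,dy$ for all $\varphi\in\mathcal{C}^\infty(\overline{B_1^+};\CC)$ — crucially, the test functions are not required to vanish on $\set{y_d=0}$ — where $b:=\abs{\det D\Psi^{-1}}$ is continuous and bounded above and below, $A:=b\,\gamma\,(D\Psi)(D\Psi)^T$ (composed with $\Psi^{-1}$) is symmetric, continuous, bounded, and uniformly elliptic with constant depending on $\nu$ and $\Omega$, and $\abs{\tilde f}\le C(\Omega)\norm{f}_{L^\infty(\Omega)}$. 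It therefore suffices to show, for every $\rho\in(0,1]$,
\[
\norm{\tilde u}_{L^\infty(B_{\rho/2}^+)}\le C\left(\rho^{-d/2}\norm{\tilde u}_{L^2(B_\rho^+)}+\rho^2\norm{\tilde f}_{L^\infty(B_\rho^+)}\right),
\]
and then to transport this back through the bi‑Lipschitz map $\Psi$ (choosing $\rho\sim R$), which distorts $L^2$‑norms and Lebesgue measure only by $\Omega$‑dependent factors.

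To prove the half‑ball estimate, reflect across $\set{y_d=0}$: extend $\tilde u$, $b$, $\tilde f$ evenly in $y_d$ to $B_\rho(0)$, and define $A^\#$ on $B_\rho(0)$ by extending $A_{ij}$ ($i,j<d$) and $A_{dd}$ evenly and $A_{id},A_{di}$ ($i<d$) oddly. Splitting a test function on $B_\rho(0)$ into its even and odd parts in $y_d$ and using the conormal relation on $\set{y_d=0}$, one checks routinely that $\tilde u$ is a weak solution of $-\Div(A^\#\nabla\tilde u)+ik\,b^\#\tilde u=\tilde f^\#$ in the full ball $B_\rho(0)$; here $A^\#$ is again symmetric and uniformly elliptic with the same constant (the sign flips amount to $\xi_d\mapsto-\xi_d$ in $\sum A_{ij}\xi_i\xi_j\ge\nu\abs{\xi}^2$) but is now only bounded and measurable — in general discontinuous across $\set{y_d=0}$, since a $\mathcal{C}^1$ graph map cannot be chosen to align the conormal field with $e_d$ along the boundary. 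This is the crux, and the point where Lemma~\ref{lem1.1ap} no longer applies. To bypass both the discontinuity of $A^\#$ and the complex zeroth‑order term, apply Kato's inequality to the complex solution $\tilde u$ of the real divergence‑form equation $-\Div(A^\#\nabla\tilde u)=\tilde f^\#-ik\,b^\#\tilde u$: since $\RE(\overline{\operatorname{sgn}\tilde u}\,(-ik\,b^\#\tilde u))=0$, the nonnegative function $w:=\abs{\tilde u}$ is a weak subsolution, $\int_{B_\rho}A^\#\nabla w\cdot\nabla\varphi\,dy\le\int_{B_\rho}\abs{\tilde f^\#}\,\varphi\,dy$ for all $0\le\varphi\in\mathcal{C}_c^\infty(B_\rho)$. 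A standard local boundedness estimate for subsolutions of uniformly elliptic equations with bounded measurable coefficients (Moser iteration; see, e.g., \cite{Gi93}) then yields $\sup_{B_{\rho/2}}w\le C(\rho^{-d/2}\norm{w}_{L^2(B_\rho)}+\rho^2\norm{\tilde f^\#}_{L^\infty(B_\rho)})$; since $w$ and $\tilde f^\#$ are even, $\norm{w}_{L^2(B_\rho)}^2=2\norm{\tilde u}_{L^2(B_\rho^+)}^2$ and $\norm{\tilde f^\#}_{L^\infty(B_\rho)}=\norm{\tilde f}_{L^\infty(B_\rho^+)}$, giving the displayed bound. The operator $\Lt$ is handled identically: its zeroth‑order term has the opposite sign but is still purely imaginary, hence still annihilated by Kato's inequality.

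I expect the only genuine obstacle to be this reflection step and its consequence: because the straightened coefficient is merely bounded and measurable, one must leave the Campanato/Morrey framework of Lemma~\ref{lem1.1ap} and invoke a De Giorgi--Nash--Moser local boundedness theorem, which in turn forces one to first reduce the complex Neumann problem to a real scalar subsolution inequality (via Kato's inequality, or, equivalently, by running Moser iteration on the complex equation and taking real parts at each step as in \eqref{eq1.09ww}). The interior case, the change of variables, the evenness bookkeeping for the reflection, and the covering argument for general $R$ are all routine.
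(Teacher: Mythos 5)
Your argument is correct, but it is genuinely different from the paper's. The paper never flattens or reflects: it takes the global Neumann $W^{1,p}$ theory of Lemma~\ref{lem2.2ap} (valid up to $\partial\Omega$ for the $\mathcal{C}^1$ domain and uniformly continuous $\gamma$), applies it to cutoffs $\zeta u$ with an iteration over exponents $p_j=pd/(d+pj)$ to climb from $L^2$ to $L^p$, $p>d$, then converts the resulting Morrey-type bound on $\rho^{-(d-2+2(1-d/p))}\int_{\Omega_\rho}\abs{\nabla u}^2$ into an oscillation estimate via Morrey--Campanato, and finishes with an averaging step, Caccioppoli, and the standard iteration from \cite[pp.~80--82]{Gi93}; interior and boundary balls are treated on the same footing, so no chart or reflection is needed. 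Your route instead localizes, straightens $\partial\Omega$, reflects with the even/odd extension of the conormal problem, and invokes De~Giorgi--Nash--Moser for the reflected equation, reducing the complex zeroth-order term by Kato's inequality (or, as you note, by taking real parts inside Moser iteration, which is the safer formulation since Kato for merely measurable symmetric $A^\#$ then needs no separate justification). What your approach buys is independence from the boundary $W^{1,p}$ solvability machinery: local boundedness near the boundary comes from rough-coefficient theory alone, and in fact your boundary step does not use the modulus of continuity $\theta$ at all, only ellipticity and the $\mathcal{C}^1$ charts. What the paper's approach buys is that it recycles Lemma~\ref{lem2.2ap} already proved, works simultaneously for $L$ and $\Lt$ with no reflection bookkeeping, and is the same scheme reused later in the proof of Theorem~\ref{thm27}. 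Two small points to tighten in your write-up: after the change of variables the identity holds only for test functions supported away from the curved part of $\partial B_1^+$ (this is all you use, but as stated ``for all $\varphi\in\mathcal{C}^\infty(\overline{B_1^+})$'' is too strong); and when you transport the half-ball estimate back through $\Psi$, the Lipschitz distortion enlarges the concentric ball, so you should either prove the estimate with $\Omega\cap B_{CR}(x)$ on the right and remove the $C$ by the same covering/iteration device you already use for $R>4R_0$, or cover $\Omega\cap B_{R/2}(x)$ by balls of radius $\sim R/C$ whose doublings stay in $B_R(x)$. Both are routine and do not affect the validity of the proof.
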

\begin{proof}
We will only consider the case when $u$ is a weak solution of  $-Lu=f$ with zero conormal data.
By Lemma~\ref{lem2.2ap}, we find that $u \in W^{1,p}(\Omega)$ for all $p\in (1,\infty)$ and
\[
\norm{\nabla u}_{L^p(\Omega)} \le C \norm{f}_{L^{pd/(p+d)}(\Omega)} \le C \norm{f}_{L^\infty(\Omega)} .
\]
Let $v=\zeta u$, where $\zeta: \RR^d \to \RR$ is a smooth function to be chosen later.
Observe that $v$ is a weak solution of the problem
\[
\left\{
\begin{aligned}
-L v= \tilde f +\nabla\cdot \tilde F \quad &\text{in }\;\Omega,\\
(\gamma \nabla v + \tilde F)\cdot n=0 \quad &\text{on }\;\partial\Omega,
\end{aligned}
\right.
\]
where
\[
\tilde f:= \zeta f -\gamma \nabla \zeta \cdot \nabla u, \qquad \tilde F:=  -\gamma u \nabla \zeta.
\]
Let $x\in\Omega$ and $0<R<\diam(\Omega)$ be arbitrary but fixed.
For any $y\in \Omega\cap B_R(x)$ and $0<\rho<r \le R$, we choose the function $\zeta$ to be such that
\[
0\le \zeta \le 1,\quad \supp \zeta \subset B_r(y),\quad \zeta\equiv 1\,\text{ on }\, B_\rho(y),\quad\text{and}\quad\abs{\nabla \zeta} \le 2/(r-\rho).
\]
For any $p\in (1,\infty)$, we set $q=pd/(p+d)$ and apply Lemma~\ref{lem2.2ap} together with H\"older's inequality to get
\begin{equation}            \label{eq6.4he}
\norm{\nabla u}_{L^p(\Omega_\rho)} \le C\left( r^{1+d/p} \norm{f}_{L^\infty(\Omega_r)} +(r-\rho)^{-1}\norm{\nabla u}_{L^{pd/(p+d)}(\Omega_r)} + (r-\rho)^{-1} \norm{u}_{L^p(\Omega)}\right),
\end{equation}
where we use the notation $\Omega_r=\Omega_r(y)=\Omega\cap B_r(y)$.
Now, fix $p>d$, and let $m= [ d(1/2-1/p)]$,
\[
p_j= \frac{pd}{d+p j} \quad\text{and}\quad r_j=\rho+ \frac{(r-\rho)j}{m},\quad j=0,\ldots,m.
\]
Then we apply \eqref{eq6.4he} iteratively to get
\begin{multline*}
\norm{\nabla u}_{L^p(\Omega_\rho)}
\le \sum_{j=1}^m C^j\left(\frac{m}{r-\rho}\right)^{j-1} r_j^{1+d/p_{j-1}} \norm{f}_{L^\infty(\Omega_{r_j})} \\
+\sum_{j=1}^m C^j\left(\frac{m}{r-\rho}\right)^j \norm{u}_{L^{p_{j-1}}(\Omega_{r_j})}+C^m\left(\frac{m}{r-\rho}
\right)^m\norm{\nabla u}_{L^{p_m}(\Omega_{r_m})}.
\end{multline*}
Notice that $1<p_m\le 2$.
By using H\"older's inequality we then obtain
\begin{multline*}
\rho^{-d(1/2-1/p)}\norm{\nabla u}_{L^2(\Omega_\rho)} \le C\left(\frac{r}{r-\rho}\right)^{m-1} r^{1+d/p}\norm{f}_{L^\infty(\Omega_r)}\\
+C \left(\frac{r}{r-\rho}\right)^m r^{-1} \norm{u}_{L^p(\Omega_r)}+ C\left(\frac{r}{r-\rho}\right)^m r^{d(1/p-1/2)} \norm{\nabla u}_{L^2(\Omega_r)}.
\end{multline*}
If we take $r=R/4$ and $\rho<r/2=R/4$ in the above, then for all $y\in \Omega_{R/4}(x)$, we get
\begin{multline}                    \label{eq6.5tt}
\left(\rho^{-(d-2+2(1-d/p))}\int_{\Omega_\rho(y)}\abs{\nabla u}^2\,dz\right)^{1/2} \le  C R^{1+d/p}\norm{f}_{L^\infty(\Omega_R(x))}\\
+C R^{-1} \norm{u}_{L^p(\Omega_R(x))}+CR^{d(1/p-1/2)} \norm{\nabla u}_{L^2(\Omega_{R/2}(x))}=:A(R).
\end{multline}
Hereafter in the proof, we shall denote $\Omega_R=\Omega_R(x)$.
Then by Morrey-Campanato's theorem (see \cite[Section~3.1]{Gi93}), for all $z, z' \in \Omega_{R/4}$, we have
\[
\abs{u(z)-u(z')} \le C R^{1-d/p} A(R),
\]
where $A(R)$ is as defined in \eqref{eq6.5tt}.
Therefore, for any $z\in \Omega_{R/4}$ we have
\[
\abs{u(z)} \le \abs{u(z')}+ \abs{u(z)-u(z')} \le \abs{u(z')} + C R^{1-d/p} A(R),\quad \forall z'\in \Omega_{R/4}.
\]
By taking average over $z'\in \Omega_{R/4}$ in the above and using the definition of $A(R)$, we obtain
\[
\sup_{\Omega_{R/4}}\,\abs{u}
\le \frac{1}{\GO_{R/4}|} \int_{\Omega_{R/4}} \abs{u(z')}\,dz'+ C R^2\norm{f}_{L^\infty(\Omega_R)}+C R^{-d/p} \norm{u}_{L^p(\Omega_R)}+CR^{1-d/2} \norm{\nabla u}_{L^2(\Omega_{R/2})}.
\]
Then by using H\"older's inequality and Caccioppoli's inequality, we get
\[
\sup_{\Omega_{R/4}}\,\abs{u}  \le C R^2\norm{f}_{L^\infty(\Omega_R)}+C R^{-d/p} \norm{u}_{L^p(\Omega_R)}+CR^{-d/2} \norm{u}_{L^2(\Omega_R)}.
\]
By using a standard argument (see \cite[pp. 80--82]{Gi93}), we derive from the above inequality
\[
\sup_{\Omega_{R/2}}\, \abs{u} \le
CR^2\norm{f}_{L^\infty(\Omega_R)}+CR^{-d/2} \norm{u}_{L^2(\Omega_R)}.
\]
The proof is complete. \qed
\end{proof}

%
\subsection{Global estimates for Neumann function}
%

The next theorem provides global pointwise bound for the Neumann
function  $N$.

\begin{thm}\label{thm2ap}
Let $\Omega\subset \RR^d$ be a bounded $\mathcal{C}^1$ domain and
assume that $\gamma \in \mathcal{C}^0(\overline\Omega)$. Let
$N(x,y)$ be the Neumann function of $L$ in $\Omega$ as constructed
in Theorem~\ref{thm1ap}. Then we have the following global
pointwise bound for the Neumann function:
\begin{equation}                            \label{eq2.17dc}
\abs{N(x,y)} \le C \abs{x-y}^{2-d}\quad \text{for all }\, x,y\in\Omega\;\text{ with }\;x\ne y,
\end{equation}
where $C$ depends on $d, \nu, \Omega$, and  $\theta$. Moreover,
for all $y\in\Omega$ and $0<r<\diam(\Omega)$, we have
\begin{enumerate}
\item[{\rm i)}] $\norm{N(\cdot,y)}_{L^{2d/(d-2)}(\Omega\setminus
B_r(y))}+\norm{\nabla N(\cdot,y)}_{L^2(\Omega\setminus B_r(y))}
\le C r^{1-d/2}$. \item[{\rm ii)}] $\norm{N(\cdot,y)}_{L^p(\GO\cap
B_r(y))}\le C r^{2-d+d/p}\;$ for $p\in [1,\frac{d}{d-2})$.
\item[{\rm iii)}] $\abs{\set{x\in\Omega:\abs{N(x,y)}>t}}\le C
t^{-d/(d-2)}\;$ for all $t> 0$. \item[{\rm iv)}] $\norm{\nabla
N(\cdot,y)}_{L^p(\GO \cap B_r(y))}\le C r^{1-d+d/p}\;$ for $p\in
[1,\frac{d}{d-1})$. \item[{\rm v)}]
$\abs{\set{x\in\Omega:\abs{\nabla_x N(x,y)}>t}}\le C
t^{-d/(d-1)}\;$ for all $t> 0$. \item[{\rm vi)}]
$\abs{N(x,y)-N(x',y)} \le C \abs{x-x'}^{\lambda_0}
\abs{x-y}^{2-d-\lambda_0}\;$ if $\;\abs{x-x'}<\abs{x-y}/2$ for
some $\lambda_0 \in (0,1)$.
\end{enumerate}
In the above, $C$ is a constant depending on $d, \nu, k_0,
\Omega$, and $\theta$; it depends on $p$ as well in ii) and  iv).
Estimates i) -- vi) are also valid for the Neumann function
$N^*(x,y)$ of the adjoint $\Lt$.
\end{thm}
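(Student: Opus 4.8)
The plan is to re-run the proof of Theorem~\ref{thm1ap}, replacing throughout the interior local boundedness estimate \eqref{eq1.14vt} of Lemma~\ref{lem1.1ap} by the up-to-the-boundary estimate of Lemma~\ref{lem:lb}; it is precisely this substitution that removes the restrictions $r<d_y$ and $\abs{x-y}<d_y/2$ present there. Since all of the quantitative bounds \eqref{eq4.11bs}--\eqref{eq4.15tw} and the H\"older estimate vii) of Theorem~\ref{thm1ap} were consequences of a pointwise bound for the averaged Neumann function $N^\epsilon(\cdot,y)$, the core of the argument is the \emph{global} analogue of that bound, namely \eqref{eq2.17dc}.

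I would first establish a global $L^1$ bound for $N^\epsilon(\cdot,y)$ by repeating the derivation of \eqref{eq:starbucks}, but with Lemma~\ref{lem:lb} in place of \eqref{eq1.14vt}. Fix $y\in\Omega$, let $0<R<\diam\Omega$, and for $f\in\mathcal{C}_c^\infty(\Omega\cap B_R(y);\CC)$ let $u\in W^{1,2}(\Omega;\CC)$ solve $-\Lt u=f$ with zero conormal data. Lemma~\ref{lem:lb} and H\"older's inequality give $\norm{u}_{L^\infty(\Omega\cap B_{R/2}(y))}\le C\bigl(R^{1-d/2}\norm{u}_{L^{2d/(d-2)}(\Omega)}+R^2\norm{f}_{L^\infty}\bigr)$, while \eqref{eq4.4ur} gives $\norm{u}_{L^{2d/(d-2)}(\Omega)}\le CR^{1+d/2}\norm{f}_{L^\infty}$ since $f$ is supported in $B_R(y)$; hence $\norm{u}_{L^\infty(\Omega\cap B_{R/2}(y))}\le CR^2\norm{f}_{L^\infty(\Omega\cap B_R(y))}$, which is the global form of \eqref{eq:starbucks}. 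Inserting this into the identity \eqref{eq4.3wx} and dualizing yields $\norm{N^\epsilon(\cdot,y)}_{L^1(\Omega\cap B_R(y))}\le CR^2$ for all $\epsilon\in(0,R/2)$ and $R\in(0,\diam\Omega)$.

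Next I would fix $x_0\ne y_0$ and put $R:=\abs{x_0-y_0}$. For $0<\epsilon<R/3$ the function $N^\epsilon(\cdot,y_0)$ is a global weak solution of $-LN^\epsilon=\Phi_\epsilon$ with zero conormal data, and $\Phi_\epsilon\equiv0$ on $B_{2R/3}(x_0)$, so Lemma~\ref{lem:lb}, bootstrapped to its $L^1$-form by the standard iteration (cf.\ \cite[pp.~80--82]{Gi93}), gives $\abs{N^\epsilon(x_0,y_0)}\le CR^{-d}\norm{N^\epsilon(\cdot,y_0)}_{L^1(\Omega\cap B_{2R/3}(x_0))}$. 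Since $\Omega\cap B_{2R/3}(x_0)\subset\Omega\cap B_{5R/3}(y_0)$, the previous step bounds the right-hand side by $CR^{-d}(\min\set{R,\diam\Omega})^2=CR^{2-d}$ (in the case $5R/3\ge\diam\Omega$ one uses instead $\norm{N^\epsilon(\cdot,y_0)}_{L^1(\Omega)}\le C(\diam\Omega)^2$ together with $\diam\Omega\le 5R/3$). Letting $\epsilon\to0$ via the pointwise convergence \eqref{eq4.17ht} then yields \eqref{eq2.17dc}.

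With \eqref{eq2.17dc} in hand, items ii) and iii) follow by integrating $\abs{x-y}^{(2-d)p}$ over $\Omega\cap B_r(y)$ (which converges precisely for $p<\tfrac{d}{d-2}$) and from the inclusion $\set{x\in\Omega:\abs{N(x,y)}>t}\subset B_{(C/t)^{1/(d-2)}}(y)$; the $L^{2d/(d-2)}$-part of i) is another direct integration, and its gradient part follows from a Caccioppoli estimate on $\Omega\setminus B_{r/2}(y)$ (test function $\eta^2\overline{N(\cdot,y)}$ with $\eta\equiv1$ off $B_r(y)$ and $\eta\equiv0$ on $B_{r/2}(y)$) combined with \eqref{eq2.17dc}. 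Item v) follows from the now-global gradient estimate in i) by the weak-type argument of \cite[pp.~147--148]{HK07}, the range $t\le(\diam\Omega)^{1-d}$ being trivial since $\abs{\Omega}\le C(\diam\Omega)^d$; iv) then follows from v) by the standard passage from weak to strong type; and vi) follows from the boundary H\"older seminorm estimate contained in the proof of Lemma~\ref{lem:lb}, together with \eqref{eq2.17dc}, exactly as vii) of Theorem~\ref{thm1ap} was obtained in the interior. Finally, \eqref{eq2.17dc} for $N^*$ is immediate from $N^*(x,y)=\overline{N(y,x)}$, and i)--vi) for $N^*$ follow by repeating the argument with $L$ replaced by $\Lt$. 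The substantive work is already packaged in Lemma~\ref{lem:lb}; granting it, the present proof is a ``boundary upgrade'' of that of Theorem~\ref{thm1ap}, and the only points requiring care are the scaling bookkeeping in the duality step near $\partial\Omega$, the $L^1$-bootstrap of Lemma~\ref{lem:lb}, and keeping track of the constants.
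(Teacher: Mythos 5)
Your proposal is correct and follows essentially the same route as the paper: replace the interior estimate \eqref{eq1.14vt} by Lemma~\ref{lem:lb} to get the global $L^1$ bound on $N^\epsilon(\cdot,y)$ by duality, bootstrap Lemma~\ref{lem:lb} to its $L^1$ form to obtain \eqref{eq2.17dc} via \eqref{eq4.17ht}, then deduce i)--v) from the pointwise bound and a Caccioppoli estimate off $B_r(y)$ (the paper performs it at the level of $N^\epsilon$ and passes to the limit, and cites the HK07 weak/strong-type machinery rather than your direct integrations, but these are cosmetic differences), and obtain vi) from the boundary H\"older estimate implicit in the proof of Lemma~\ref{lem:lb} exactly as vii) of Theorem~\ref{thm1ap}.
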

\begin{proof}
Let $y\in \Omega$ be arbitrary, but fixed. Assume that $f \in
\mathcal{C}_c^\infty(\Omega;\CC)$ is supported in
$\Omega_R(y)=\Omega\cap B_R(y)$ and let $u$ be the unique weak
solution in $W^{1,2}(\Omega;\CC)$ of problem \eqref{eq2.10yq}.
Then we have the identities \eqref{eq4.2ym} and \eqref{eq4.3wx} as
in the proof of Theorem~\ref{thm1ap}. Also, we have estimate
\eqref{eq4.4ur}, and thus by Sobolev embedding theorem, we get
\begin{equation}        \label{eq3.1a}
\norm{u}_{L^{2d/(d-2)}(\Omega)} \le C \norm{f}_{L^{2d/(d+2)}(\Omega)}\le C  R^{(2+d)/2} \norm{f}_{L^\infty(\Omega_R(y))},
\end{equation}
where $C=C(d, \nu, \Omega)$.
Then by Lemma~\ref{lem:lb} and \eqref{eq3.1a}, we obtain
\begin{equation}    \label{eq3.2v}
\norm{u}_{L^\infty(\Omega_{R/2}(y))} \le C R^2 \norm{f}_{L^\infty(\Omega_R(y))}.
\end{equation}
Hence, by \eqref{eq4.3wx} and \eqref{eq3.2v}, we conclude that
\begin{equation}\label{eq3.3y}
\Biggabs{\int_{\Omega_R(y)} N^\epsilon(x,y) \overline {f(x)} \,dx
\,} \le CR^2 \norm{f}_{L^\infty(\Omega_R(y))},\quad \forall  f\in
\mathcal{C}_c^\infty(\Omega_R(y); \CC),\;\; \forall \epsilon \in
(0,R/2).
\end{equation}
Therefore, by duality, we conclude from \eqref{eq3.3y} that
\begin{equation}    \label{eq3.3w}
\norm{N^\epsilon (\cdot,y)}_{L^1(\Omega_R(y))}\le C R^2,\quad \forall \epsilon \in (0,R/2).
\end{equation}

Next, recall that the $v=N^\epsilon(\cdot,y)$ is the unique weak
solution in $W^{1,2}(\Omega;\CC)$ of problem \eqref{eq4.0ik}. Let
$x\in\Omega$,  $r>0$, and $\epsilon>0$ be such that
$B_\epsilon(y)\cap B_r(x) =\emptyset$. Then Lemma~\ref{lem:lb}
implies that
\begin{equation}                    \label{eq4.25dd}
\norm{N^\epsilon(\cdot, y)}_{L^\infty(\Omega_{r/2}(x))} \le C  r^{-d/2} \norm {N^\epsilon(\cdot, y)}_{L^2(\Omega_r(x))}.
\end{equation}
By a standard iteration argument (see \cite[pp. 80--82]{Gi93}), we then obtain from \eqref{eq4.25dd} that
\begin{equation}    \label{eq2.8r}
\norm{N^\epsilon(\cdot, y)}_{L^\infty(\Omega_{r/2}(x))}  \le C r^{-d} \norm{N^\epsilon(\cdot, y)}_{L^1(\Omega_r(x))}.
\end{equation}
Now, for any $x\in\Omega\setminus \set{y}$, take $R=3r=3\abs{x-y}/2$.
Then by \eqref{eq2.8r} and \eqref{eq3.3w}, we obtain for all $\epsilon \in (0,r)$ that
\[
\abs{N^\epsilon(x,y)} \le C r^{-d} \norm{N^\epsilon(\cdot, y)}_{L^1(\Omega_r(x))}  \le C r^{-d} \norm{N^\epsilon(\cdot,y)}_{L^1(\Omega_{3r}(y))}
\le C \abs{x-y}^{2-d}.
\]
Therefore, by using \eqref{eq4.17ht}, we may take the limit $\epsilon\to 0$ in the above  and obtain \eqref{eq2.17dc}.

To derive estimates i) -- vi) in the theorem, we need to repeat
some steps in the proof of Theorem~\ref{thm1ap} with a little
modification. Let $v=N^\epsilon(\cdot,y)$, where
$0<\epsilon<\min(d_y,r)/6$ and $0<r<\diam(\Omega)$. Let $\eta$ be
a smooth function on $\RR^d$ satisfying the conditions
\eqref{eq4.19h}. We set $\phi=\eta^2 v$ in \eqref{eq4.1tv} and
obtain
\[
\int_{\Omega} (\gamma \eta^2 \nabla v \cdot \overline{\nabla v}+ik
v \overline v)\,dx+\int_{\Omega}2 \eta \gamma \overline v \nabla v
\cdot  \nabla \eta \,dx= 0,
\]
where we used the fact that $\eta^2 \Phi_\epsilon \equiv 0$.
By using Cauchy's inequality we get
\[
\int_\Omega \eta^2 \abs{\nabla N^\epsilon(\cdot,y)}^2\,dx
\le C \int_\Omega\abs{\nabla \eta}^2 \abs{N^\epsilon(\cdot,y)}^2\,dx.
\]
By using the pointwise bound for $N^\epsilon(x,y)$ obtained above, we get
\[
\int_{\Omega\setminus B_r(y)} \abs{\nabla N^\epsilon(\cdot,y)}^2 \,dx
\le C r^{-2} \int_{B_r(y)\setminus B_{r/2}(y)} \abs{x-y}^{4-2d}\,dx \le C r^{2-d}.
\]
By taking the limit $\epsilon\to 0$ in the above, we get
\[
\norm{\nabla N(\cdot,y)}_{L^2(\Omega\setminus B_r(y))}\le C r^{(2-d)/2},\quad0<\forall r<\diam(\Omega).
\]
Observe that the pointwise bound \eqref{eq2.17dc} together with the above estimate yields
\begin{equation}\label{eq4.26gg}
\norm{N(\cdot,y)}_{L^{2d/(d-2)}(\Omega\setminus B_r(y))}+\norm{\nabla N(\cdot,y)}_{L^2(\Omega\setminus B_r(y))}\le C r^{(2-d)/2},\quad0<\forall r<\diam(\Omega),
\end{equation}
where $C$ depends on $d ,\nu, \Omega$, and $\theta$.

By following literally the same steps used in deriving
\eqref{eq4.12eq} -- \eqref{eq4.15tw} from \eqref{eq4.11bs}, and
using the fact that $\abs{\Omega}<\infty$, we obtain estimates i)
-- v) from \eqref{eq2.17dc} and \eqref{eq4.26gg}.

Finally, we remark that the proof of Lemma~\ref{lem:lb} in fact
implies that there exist constants $\lambda_0\in (0,1]$ and
$C_1>0$, which depend on $d ,\nu, \Omega$, and  $\theta$, such
that for all $x\in\Omega$ and $0<R<\diam(\Omega)$, the following
holds: Let $u$ be a  weak solution in $W^{1,2}(\Omega_R(x))$ of
either
\begin{align*}
-L u&=0 \;\text{ in }\; \Omega \cap B_R(x),\quad \gamma \nabla u \cdot n=0 \;\text{ on }\; \partial\Omega \cap B_R(x)\\
\text{or}\quad -\Lt  u&=0 \;\text{ in }\; \Omega \cap B_R(x),\quad \gamma \nabla u \cdot  n=0 \;\text{ on }\; \partial\Omega \cap B_R(x),
\end{align*}
then we have
\[
R^{\lambda_0} [u]_{0,\lambda_0;\Omega_{R/2}} \le C_1 R^{-d/2}
\norm{u}_{L^2(\Omega_R)}.
\]
By utilizing the above estimate and modifying the proof for
estimate vii) in Theorem~\ref{thm1ap}, we have vi), and the proof
is complete. \qed
\end{proof}

\subsection{Proof of Theorem \ref{thm27}}

We are now ready to prove Theorem \ref{thm27}.

 Let
$u=N(\cdot,y)-N_0(\cdot,y)$. Observe that Theorem~\ref{thm2ap}
implies that $u\in W^{1,q}(\Omega)$ for $1\le q <d/(d-1)$, and
also that we have
\[
\int_\Omega (\gamma \nabla u \overline{\nabla \phi} + ik u
\overline \phi) \,dx = \int_\Omega (\gamma-\gamma_0) \nabla
N_0(\cdot,y) \overline{\nabla \phi}\,dx,\quad \forall \phi \in
\mathcal{C}^\infty(\overline\Omega;\CC).
\]
In other words, $u$ is a weak solution in $W^{1,q}(\Omega)$ of the problem
\[
\left\{
\begin{aligned}
-Lu=-\nabla\cdot F \quad\text{in }\;\Omega,\\
(\gamma \nabla u +F) \cdot n=0 \quad \text{on }\;\partial \Omega,
\end{aligned}
\right.
\]
where $F=(\gamma-\gamma_0)\nabla N_0(\cdot,y)$.

Note that
\begin{equation}\label{eq1.70kp}
\abs{\nabla_x N_0(x,y)} \le C \abs{x-y}^{1-d},\quad \forall x,y\in \Omega, \quad x\neq y.
\end{equation}
Indeed, for any $x\in\Omega$ with $x\ne y$, we set $R=\abs{x-y}/2$
and apply \eqref{eq3rd} and estimate i) in Theorem~\ref{thm2ap} to
obtain
\[
\abs{\nabla_x N_0(x,y)} \le C R^{-d/2} \norm{\nabla N_0(\cdot,y)}_{L^2(\Omega\setminus B_R(y))}\ \le C R^{1-d},
\]
which obviously implies \eqref{eq1.70kp}.
Moreover, by repeating the same argument, we have
\begin{equation}\label{eq3.61wy}
\abs{\nabla_x^k N_0(x,y)} \le C \abs{x-y}^{2-d-k},\quad \forall x,y\in \Omega, \quad x\neq y,\quad k=1,2,\ldots.
\end{equation}
We then obtain
\begin{equation}                    \label{eq2.60bo}
\abs{F(x)}\le C[\gamma]_{0,\lambda;\Omega}
\,\abs{x-y}^{-d/\alpha},\quad \forall x\in\Omega,\quad x\ne y,
\end{equation}
where $\alpha=d/(d-1-\lambda)$, and hence $F\in L^q(\Omega)$ for
all $q<\alpha$. It then follows from Lemma~\ref{lem2.2ap} that $u
\in W^{1,q}(\Omega)$ for all $q\in (1,\alpha)$. In fact, by
Lemma~\ref{lem2.3ap} we have
\begin{equation}                    \label{eq1.72dy}
\norm{u}_{L^{\alpha^*,\infty}(\Omega)}+\norm{\nabla u}_{L^{\alpha,\infty}(\Omega)} \le C.
\end{equation}

Let $v=\zeta u$, where $\zeta:\RR^d\to \RR$ is a smooth function to be fixed later.
Observe that $v$ is a weak solution of the problem
\[
\left\{
\begin{aligned}
-L v= \tilde f +\nabla\cdot \tilde F \quad\text{in }\;\Omega,\\
(\gamma \nabla v + \tilde F)\cdot n=0 \quad \text{on }\;\partial\Omega,
\end{aligned}
\right.
\]
where
\[
\tilde f:= -\nabla \zeta \cdot F -\gamma \nabla \zeta \cdot \nabla u, \qquad \tilde F:=  \zeta F-\gamma u \nabla \zeta.
\]
Notice that if $\zeta\equiv 0$ on a neighborhood of $y$, then we have $\tilde f \in L^q(\Omega)$ and $\tilde F\in L^{q^*}(\Omega)$ for all $q\in (1,\alpha)$.
By Lemma~\ref{lem2.2ap}, we have $v \in W^{1,q^*}(\Omega)$ and thus, we find that $u\in W^{1,q^*}_{loc}(\Omega\setminus\set{y})$.
By repeating the above argument, if necessary, we conclude that
$u \in W^{1,s}_{loc}(\Omega\setminus\set{y})$ for some $s>d$, and thus we have $u \in L^\infty_{loc}(\Omega\setminus\set{y})$.

Next, for $x\in \Omega$ with $x\neq y$, let $R=\abs{x-y}/2$.
For any $x' \in \Omega\cap B_R(x)$ and $0<\rho<r \le R$, we choose the function $\zeta$ to be such that
\[
0\le \zeta \le 1,\quad \supp \zeta \subset B_r(x'),\quad \zeta\equiv 1\,\text{ on }\, B_\rho(x'),\quad\text{and}\quad\abs{\nabla \zeta} \le 2/(r-\rho).
\]
Notice that for all $q\in (1,d)$, we have the following estimates, where we write $\Omega_\rho=\Omega_\rho(x')=\Omega\cap B_\rho(x')$ for the simplicity of notation,
\begin{align*}
\norm{\nabla u}_{L^{q^*,\infty}(\Omega_\rho)} & \le \norm{\nabla v}_{L^{q^*,\infty}(\Omega)},\\
\norm{\nabla \zeta \cdot F}_{L^{q,\infty}(\Omega)} &\le
\norm{\nabla \zeta}_{L^\infty} \norm{F}_{L^{q,\infty}(\Omega_r)} \le
 C (r-\rho)^{-1} r^{d/q}\norm{F}_{L^\infty(\Omega_r)},\\
\norm{\nabla \zeta \cdot \nabla u}_{L^{q,\infty}(\Omega)} &\le \norm{\nabla \zeta}_{L^\infty} \norm{\nabla u}_{L^{q,\infty}(\Omega_r)} \le  C (r-\rho)^{-1} \norm{\nabla u}_{L^{q,\infty}(\Omega_r)},\\
\norm{\zeta F}_{L^{q^*,\infty}(\Omega)} &\le \norm{F}_{L^{q^*,\infty}(\Omega_r)} \le C r^{d/q-1}\norm{F}_{L^\infty(\Omega_r)}, \\
\norm{u\nabla \zeta}_{L^{q^*,\infty}(\Omega)} &\le \norm{\nabla \zeta}_{L^\infty} \norm{u}_{L^{q^*,\infty}(\Omega_r)} \le C(r-\rho)^{-1}\norm{u}_{L^{q^*,\infty}(\Omega_r)}.
\end{align*}
Therefore, by Lemma~\ref{lem2.3ap} applied to $v$, we have for all $t \in (d/(d-1),\infty)$
\begin{multline}                \label{eq2.73he}
\norm{\nabla u}_{L^{t,\infty}(\Omega_\rho)} \le C\left((r-\rho)^{-1}r^{1+d/t}\norm{F}_{L^\infty(\Omega_r)}+(r-\rho)^{-1}\norm{\nabla u}_{L^{td/(t+d),\infty}(\Omega_r)}\right.\\
\left.+r^{d/t}\norm{F}_{L^\infty(\Omega_r)}+(r-\rho)^{-1}\norm{u}_{L^{t,\infty}(\Omega_r)}\right).
\end{multline}
Now, fix $s>d$ and let $m = [d(1/\alpha-1/s)]$,
\[
s_j= \frac{sd}{d+s j} \quad\text{and}\quad r_j=\rho+ \frac{(r-\rho)j}{m},\quad j=0,\ldots,m.
\]
Recall that if $E$ is a bounded set and $0<q<p<\infty$, then
\begin{equation}\label{drexel}
\norm{f}_{L^{q,\infty}(E)} \le \norm{f}_{L^q(E)} \le \sqrt{\frac{p}{p-q}}\, \abs{E}^{1/q-1/p}\, \norm{f}_{L^{p,\infty}(E)} .
\end{equation}
With the aid of \eqref{drexel}, we apply \eqref{eq2.73he} repeatedly and argue as in the proof of Lemma~\ref{lem:lb} to obtain
\begin{multline*}
\rho^{-d(1/2-1/s)}\norm{\nabla u}_{L^2(\Omega_\rho)}
\le C\left(\frac{r}{r-\rho}\right)^m r^{d/s} \norm{F}_{L^\infty(\Omega_r)}+ C \left(\frac{r}{r-\rho}\right)^{m-1} r^{d/s}\norm{F}_{L^\infty(\Omega_r)} \\
+ Cr^{-1}\left(\frac{r}{r-\rho}\right)^m \norm{u}_{L^s(\Omega_r)}+C\left(\frac{r}{r-\rho}\right)^m r^{d(1/s-1/\alpha)}\norm{\nabla u}_{L^{\alpha,\infty}(\Omega_r)}.
\end{multline*}
If we take $r=R/4$ and $\rho<r/2=R/4$ in the above, then for all $x'\in \Omega_{R/4}(x)$, we get
\begin{multline}                \label{eq1.67rm}
\left(\rho^{-(d-2+2(1-d/s))}\int_{\Omega_\rho(x')}\abs{\nabla u}^2\,dz\right)^{1/2} \le  C R^{d/s}\norm{F}_{L^\infty(\Omega_R(x))}\\
+C R^{-1} \norm{u}_{L^s(\Omega_R(x))}+CR^{d(1/s-1/\alpha)} \norm{\nabla u}_{L^{\alpha,\infty}(\Omega)},
\end{multline}
which is analogous to \eqref{eq6.5tt} in the proof of Lemma~\ref{lem:lb}.
Then by utilizing \eqref{eq1.72dy} and proceeding as in the proof of Lemma~\ref{lem:lb}, we obtain
\[
\sup_{\Omega_{R/4}}\, \abs{u} \le CR^{-d} \norm{u}_{L^1(\Omega_R)}+ CR \norm{F}_{L^\infty(\Omega_R)} + CR^{1-d/\alpha},\quad \Omega_R=\Omega_R(x).
\]
By Lemma~\ref{lem2.3ap} and \eqref{drexel} again, we get
\[
\norm{u}_{L^1(\Omega_R)} \le C R^{2+\lambda}
\norm{u}_{L^{s^*,\infty}(\Omega_R)} \le CR^{2+\lambda}.
\]
Combining the above two inequalities and using \eqref{eq2.60bo}, we get
\[
\abs{N(x,y)-N_0(x,y)}=\abs{u(x)} \le C
R^{2-d+\lambda}+CR^{1-d/\alpha} \le C \abs{x-y}^{2-d+\lambda}.
\]
This completes the proof of \eqref{eq1.46bc}.

Next, we turn to the proof of \eqref{eq1.59gg}.
Let $u=N(\cdot,y)-N_0(\cdot,y)$ as before.
Observe that $u$ satisfies
\[
-L_0 u= \nabla\cdot (F+(\gamma-\gamma_0) \nabla u)\quad\text{in
}\;\Omega.
\]
Let $x\in \Omega$ satisfy $0<\abs{x-y}<d_y/2$ and let $R=\abs{x-y}/2$ as before.
For any $x' \in B_{R/2}(x)$ and $0<r \le R/2$, let $w$ be the unique weak solution in $W^{1,2}_0(B_r(x'))$ of
the problem
\[
\left\{
\begin{aligned}
-\gamma_0 \Delta w&=-ik u+ \nabla\cdot (F + (\gamma-\gamma_0) \nabla u)\quad\text{in }\; B_r(x'),\\
w &=0 \quad \text{on }\;\partial B_r(x').
\end{aligned}
\right.
\]
Then $w$ satisfies the following identity:
\begin{equation}                \label{eq:addon1}
\int_{B_r(x')} \gamma_0 \nabla w\cdot \overline{\nabla
w}\,dz=-\int_{B_r(x')} (ik u \overline w - (F-F_r)\cdot
\overline{\nabla w} - (\gamma-\gamma_0) \nabla u \cdot
\overline{\nabla w} )\,dz,
\end{equation}
where we use the notation
\[
F_r=F_{x',r}=\frac{1}{|B_r(x')|} \int_{B_r(x')} F \,dz.
\]
Notice that by H\"older's inequality and the Sobolev inequality, we have
\begin{align*}
\Biggabs{\int_{B_r(x')} iku \overline w} &\le Ck\left(\int_{B_r(x')}\abs{u}^{2d/(d+2)}\right)^{(d+2)/2d}\left(\int_{B_r(x')}\abs{\nabla w}^2\right)^{1/2}\\
&\le Ckr^{(d+2)/2}\norm{u}_{L^\infty(B_R)} \left(\int_{B_r(x')}\abs{\nabla w}^2\right)^{1/2}.
\end{align*}
Also, by H\"older's inequality, we have
\begin{align*}
\Biggabs{\int_{B_r(x')} (F-F_r)\cdot\overline{\nabla w}} &\le \left(\int_{B_r(x')}\abs{F-F_r}^2\right)^{1/2}\left(\int_{B_r(x')}\abs{\nabla w}^2\right)^{1/2}\\
&\le [F]_{0,\lambda;B_R} r^\lambda \abs{B_r}^{1/2} \left(\int_{B_r(x')}\abs{\nabla w}^2\right)^{1/2}\\
&\le C [F]_{0,\lambda;B_R} r^{\lambda+d/2}
\left(\int_{B_r(x')}\abs{\nabla w}^2\right)^{1/2}.
\end{align*}
Similarly, we estimate
\begin{align*}
\Biggabs{\int_{B_r(x')} (\gamma-\gamma_0)\nabla u \cdot\overline{\nabla w}} &\le \norm{\gamma-\gamma_0}_{L^\infty(B_r(x'))} \left(\int_{B_r(x')}\abs{\nabla u}^2\right)^{1/2}\left(\int_{B_r(x')}\abs{\nabla w}^2\right)^{1/2}\\
&\le [\gamma]_{0,\lambda;B_R} r^\lambda
\left(\int_{B_r(x')}\abs{\nabla u}^2\right)^{1/2}
\left(\int_{B_r(x')}\abs{\nabla w}^2\right)^{1/2}.
\end{align*}
Therefore, by using Cauchy's inequalities, we derive from \eqref{eq:addon1} and the above estimates that
\begin{equation}\label{eq1.66dx}
\int_{B_r(x')} \abs{\nabla w}^2 \,dz \le Ck^2 r^{d+2}
\norm{u}_{L^\infty(B_R)}^2 +C  r^{d+2\lambda}
[F]_{0,\lambda;B_R}^2 +Cr^{2\lambda} [\gamma]_{0,\lambda;B_R}^2
\int_{B_r(x')} \abs{\nabla u}^2 \,dz,
\end{equation}
where we use abbreviation $B_R=B_R(x)$.

Notice that $v=u-w$ satisfies
\[
\Delta v=0\quad \text{in }\; B_r(x').
\]
By well-known estimates for harmonic functions (see, for
instance, \cite[p. 78]{Gi83}), we get 
\[
\int_{B_\rho(x')} \abs{\nabla v-(\nabla v)_\rho}^2\,dz \le C (\rho/r)^{d+2} \int_{B_r(x')} \abs{\nabla v-(\nabla v)_r}^2\,dz,\quad \forall \rho \in(0,r).
\]
Then by using the triangle inequality, we get  for all $0<\rho<r$ that
\begin{align*}
\int_{B_\rho(x')} \abs{\nabla u-(\nabla u)_\rho}^2\,dz &\le
2\int_{B_\rho(x')} \abs{\nabla v-(\nabla v)_\rho}^2\,dz+
2\int_{B_\rho(x')} \abs{\nabla w-(\nabla w)_\rho}^2\,dz\\
&\le C(\rho/r)^{d+2}\int_{B_\rho(x')} \abs{\nabla v-(\nabla v)_r}^2\,dz+2\int_{B_\rho(x')} \abs{\nabla w}^2\,dz\\
&\le C(\rho/r)^{d+2}\int_{B_\rho(x')} \abs{\nabla u-(\nabla u)_r}^2\,dz+C\int_{B_r(x')} \abs{\nabla w}^2\,dz,
\end{align*}
where we have used the well known fact that
\[
\inf_{c\in \mathbb R} \int_{B_r(x)} \abs{f-c}^2\,dz = \int_{B_r(x)} \abs{f-f_r}^2\,dz.
\]
By combining the above inequality and \eqref{eq1.66dx}, we get for
all $0<\rho<r$ that
\begin{multline}\label{eq2.90ej}
\int_{B_\rho(x')} \abs{\nabla u-(\nabla u)_\rho}^2\,dz \le C\left(\frac{\rho}{r}\right)^{d+2} \int_{B_r(x')} \abs{\nabla u-(\nabla u)_r}^2\,dz\\
+Ck^2 r^{d+2} \norm{u}_{L^\infty(B_R)}^2 +C  r^{d+2\lambda}
[F]_{0,\lambda;B_R}^2+C r^{2\lambda} [\gamma]_{0,\lambda;B_R}^2
\int_{B_r(x')} \abs{\nabla u}^2 \,dz.
\end{multline}
On the other hand, by setting $\epsilon=d/s$ and $\rho=r$ in \eqref{eq1.67rm}, we get
\[
\left(r^{-(d-2\epsilon)}\int_{B_r(x')}\abs{\nabla u}^2\,dz\right)^{1/2} \le  C R^\epsilon\norm{F}_{L^\infty(B_R)} +C R^{\epsilon-1}  \norm{u}_{L^\infty(B_R)}+CR^{\epsilon-d/\alpha}.
\]
Combining the above inequalities, for all $x'\in B_{R/2}(x)$ and $0<\rho<r\le R/2$, we get
\begin{multline*}
\int_{B_\rho(x')} \abs{\nabla u-(\nabla u)_\rho}^2 \le C\left(\frac{\rho}{r}\right)^{d+2} \int_{B_r(x')} \abs{\nabla u-(\nabla u)_r}^2 +Ck^2 r^{d+2} \norm{u}_{L^\infty(B_R)}^2 +C [F]_{0,\lambda;B_R}^2\, r^{d+2\lambda}\\
+C [\gamma]_{0,\lambda;B_R}^2\, r^{d+2\lambda-2\epsilon}
\left(R^\epsilon\norm{F}_{L^\infty(B_R)} + R^{\epsilon-1}
\norm{u}_{L^\infty(B_R)}+R^{\epsilon-d/\alpha}\right)^2.
\end{multline*}
By Campanato's iteration lemma, for all $x'\in B_{R/2}(x)$ and
$0<r\le R/2$, we have
\begin{multline*}
\int_{B_r(x')} \abs{\nabla u-(\nabla u)_r}^2 \le C\left(\frac{r}{R}\right)^{d+2\beta} \int_{B_R} \abs{\nabla u}^2 +Ck^2 r^{d+2\beta}R^{2-2\beta} \norm{u}_{L^\infty(B_R)}^2 +C [F]_{0,\lambda;B_R}^2\,R^{2\lambda-2\beta} r^{d+2\beta}\\
+C [\gamma]_{0,\lambda;B_R}^2\, r^{d+2\beta}
R^{2\lambda-2\beta}\left(\norm{F}_{L^\infty(B_R)} + R^{-1}
\norm{u}_{L^\infty(B_R)}+R^{-d/\alpha}\right)^2,
\end{multline*}
where we set $\beta:=\lambda-\epsilon\in (0,1)$. Therefore, by
Campanato's theorem, we obtain
\begin{multline*}
R^\beta [\nabla u]_{0,\beta;B_{R/2}} \le C R^{-d/2} \norm{\nabla u}_{L^2(B_R)} +C k R \norm{u}_{L^\infty(B_R)}+ CR^\lambda [F]_{0,\lambda;B_R}\\
+C [\gamma]_{0,\lambda;B_R} R^\lambda
\left(\norm{F}_{L^\infty(B_R)} + R^{-1}
\norm{u}_{L^\infty(B_R)}+R^{-d/\alpha}\right).
\end{multline*}
By Caccioppoli's inequality, we estimate
\[
\norm{\nabla u}_{L^2(B_R)} \le C R^{-1} \norm{u}_{L^2(B_{3R/2})}
+C \norm{F}_{L^2(B_{3R/2})} \le CR^{d/2} R^{1-d+\lambda}.
\]
Also, observe that
\[
[F]_{0,\lambda;B_R} \le R^\lambda [\gamma]_{0,\lambda;B_R} [\nabla
N_0(\cdot,y)]_{0,\lambda;B_R} + [\gamma]_{0,\lambda;B_R}
\norm{\nabla N_0(\cdot,y)}_{L^\infty(B_R)} \le C R^{1-d},
\]
where $C$ depends on $d, \nu, \lambda, \Omega$, and
$[\gamma]_{0,\lambda;\Omega}$. Therefore,
\begin{equation}                \label{eq1.69yy}
R^\beta [\nabla u]_{0,\beta;B_{R/2}} \le C\left( R^{1-d+\lambda}
+kR^{3-d+\lambda} + R^{1-d+2\lambda} \right) \le CR^{1-d+\lambda}
(1+kR^2),
\end{equation}
where we used the assumption that $\Omega$ is bounded in the last step.
By proceeding as in the proof of Lemma~\ref{lem:lb}, we derive from \eqref{eq1.69yy} that
\[
\sup_{B_{R/4}} \,\abs{\nabla u} \le CR^{1-d+\lambda} (1+k R^2).
\]
This completes the proof of \eqref{eq1.59gg}.

Now, let us assume that $\gamma\in
\mathcal{C}^{1,\lambda}(\Omega)$. Let $x\in \Omega$ satisfy
$0<\abs{x-y}<d_y/2$. We again set $R=\abs{x-y}/2$ and write
$B_R=B_R(x)$. Observe that  $u$ satisfies
\[
-\gamma \Delta u = f \quad\text{in }\;B_R,
\]
where
\[
f := \nabla \gamma \cdot \nabla u - iku + ik \gamma_0^{-1}(\gamma-\gamma_0) N_0(\cdot,y) + \nabla \gamma \cdot \nabla N_0(\cdot,y).
\]
We claim that $f\in \mathcal{C}^{0,\lambda}(\overline B_R)$.
Indeed, observe that by feeding estimate \eqref{eq1.59gg} back to
\eqref{eq2.90ej} and repeating the above steps, we obtain an
improved version of estimate \eqref{eq1.69yy}, namely,
\[
[\nabla u]_{0,\lambda;B_{R/2}} \le CR^{1-d}(1+kR^2).
\]
Therefore, we obtain
\begin{align*}
[\nabla\gamma\cdot \nabla u]_{0,\lambda;B_R}
&\le [\nabla \gamma]_{0,\lambda;\Omega}\norm{\nabla u}_{L^\infty(B_R)}+\norm{\nabla\gamma}_{L^\infty(\Omega)}[\nabla u]_{0,\lambda;B_R}\\
&\le CR^{1-d+\lambda}(1+kR^2)+CR^{1-d}(1+kR^2) \le C
R^{1-d}(1+kR^2),
\end{align*}
where we have used the assumption that $\Omega$ is bounded. Also,
by taking $s=d/(1-\lambda)$ in \eqref{eq1.67rm}, we find that for
all $x'\in B_{R/2}$ and $\rho\le R/4$, we have
\[
\left(\rho^{-(d-2+2\lambda)}\int_{B_\rho(x')}\abs{\nabla
u}^2\,dz\right)^{1/2} \le  C R^{1-\lambda}\norm{F}_{L^\infty(B_R)}
+C R^{-\lambda} \norm{u}_{L^\infty(B_R)}+CR^{2-d} \le C R^{2-d}.
\]
From the above inequality and a standard covering argument, we
find that
\[
[iku]_{0,\lambda;B_R}=k[u]_{0,\lambda;B_R} \le C k R^{2-d}.
\]
In a similar fashion, with the aid of \eqref{eq3.61wy}, we also
estimate
\begin{align*}
[ik \gamma_0^{-1}(\gamma-\gamma_0) N_0(\cdot,y)]_{0,\lambda;B_R}
\le CkR^{2-d},\\
[\nabla\gamma\cdot \nabla N_0(\cdot,y)]_{0,\lambda;B_R}
\le CR^{1-d-\lambda}.
\end{align*}
Combining all together, we find
\[
[f]_{0,\lambda;B_R} \le
CR^{1-d}(1+kR^2)+Ck R^{2-d}+CR^{1-d-\lambda}
\le CR^{-d}(1+kR^2),
\]
where we again used that $\diam \Omega<\infty$.
Then the interior Schauder estimate yields
\[
[\nabla^2 u]_{0,\lambda;B_{R/2}} \le
C\left([f]_{0,\lambda;B_R}+R^{-2-\lambda}\norm{u}_{L^\infty(B_R)}\right)
\le CR^{-d} (1+ k R^2).
\]
On the other hand, by the standard $L^2$ estimates, we have
\begin{align*}
\norm{\nabla^2 u}_{L^2(B_{R/2})} &\le C\left(R^{-1}\norm{\nabla u}_{L^2(B_R)}+\norm{\nabla u}_{L^2(B_R)} +\norm{\nabla F}_{L^2(B_R)}\right)\\
&\le C\left((R^{-2}+R^{-1}) \norm{u}_{L^2(B_{3R/2})}+(R^{-1}+1)\norm{F}_{L^2(B_{3R/2})}+\norm{\nabla F}_{L^2(B_R)} \right)\\
&\le CR^{d/2} R^{-d+\lambda}(1+R^{1-\lambda}+R) \le
CR^{d/2}R^{-d+\lambda}.
\end{align*}
Therefore, we have
\begin{align*}
\sup_{B_{R/4}}\;\abs{\nabla^2 u} &\le C R^{-d/2} \norm{\nabla^2 u}_{L^2(B_{R/2})}+ CR^\lambda [\nabla^2 u]_{0,\lambda;B_{R/2}} \\
&\le CR^{-d+\lambda}+CR^{-d+\lambda} (1+ k R^2) \le C
R^{-d+\lambda} (1+ k R^2).
\end{align*}
We have thus proved \eqref{eq2.60gg}.
Finally, we prove \eqref{eq2.61hj} as follows.
Notice that $v:=\partial u/\partial x_i$, for $i=1,\ldots, d$, satisfies
\[
-Lv= \nabla\cdot \tilde{F},\quad \text{where }\; \tilde F=
(\partial\gamma/\partial x_i)\nabla u+\partial F/\partial x_i.
\]
Let $R=\abs{x-y}/2$ as before and applying \eqref{eq1.14vt} in Lemma~\ref{lem1.1ap} to $v$, we obtain
\[
\sup_{B_{R/2}}\;\abs{v} \le C\left( R^{-d/2} \norm{\nabla u}_{L^2(B_R)}+R^{1-d/2} \norm{\tilde F}_{L^2(B_R)}\right).
\]
Notice that
\begin{align*}
\norm{\tilde F}_{L^2(B_R)} &\le C \norm{\nabla\gamma}_{L^\infty(\Omega)} \left(\norm{\nabla u}_{L^2(B_R)}+ \norm{\nabla N_0(\cdot,y)}_{L^2(B_R)}+R\norm{\nabla^2 N_0(\cdot,y)}_{L^2(B_R)}\right)\\
&\le C \norm{\nabla u}_{L^2(B_R)}+ C R^{d/2}R^{1-d}.
\end{align*}
On the other hand, by \eqref{eq1.67rm}, we find
\[
R^{-d/2}\norm{\nabla u}_{L^2(B_R)} \le C
\left(\norm{F}_{L^\infty(B_{3R/2})}+R^{-1}\norm{u}_{L^\infty(B_{3R/2})}+
R^{-d/\alpha}\right)\le CR^{1-d+\lambda}.
\]
Combining together, we obtain
\[
\sup_{B_{R/2}}\;\abs{\nabla u} \le C\left( R^{1-d+\lambda}
+R^{2-d+\lambda} +R^{2-d}\right)\le C R^{1-d+\lambda},
\]
where $C$ depends on $\norm{\nabla \gamma}_{L^\infty(\Omega)}$ and
$\diam \Omega$ as well as on $d,\nu,k_0,\lambda, \Omega$. This
proves estimate \eqref{eq2.61hj}. The proof of Theorem \ref{thm27}
is now complete.

\begin{rem}     \label{rmk2.8}
We remark that for $z\in\Omega$ fixed, we may choose
$\epsilon=\epsilon(z)>0$ so small that for all $y\in
B_\epsilon(z)$, we have $d_y>4\epsilon$. Then all $x,y \in
B_\epsilon(z)$ should satisfy the relation $\abs{x-y}<d_y/2$, and
hence \eqnref{eq1.46bc}-\eqnref{eq2.60gg} hold for all $x,y \in
B_\epsilon(z)$. We also note that in the proof of
\eqref{eq2.61hj}, it is enough to assume that $\gamma \in
\mathcal{C}^1(\overline\Omega)$ not $\gamma\in
\mathcal{C}^{1,\lambda}(\overline\Omega)$. Also, if we assume
$\gamma\in \mathcal{C}^2(\overline\Omega)$, then instead of
\eqref{eq2.60gg}, we have
\begin{equation}                \label{eq3.71pb}
\abs{\nabla^2_x N(x,y)-\nabla^2_x N_0(x,y)} \le C
\abs{x-y}^{-d+\lambda},\quad\forall x\in\Omega\;\text{ satisfying
}\;0<\abs{x-y}<d_y/2,
\end{equation}
where $C$ depends on $\norm{\gamma}_{\mathcal{C}^2(\Omega)}$ and
$\diam\Omega$ as well as on $d,\nu,k_0,\lambda, \Omega$. The proof
for \eqref{eq3.71pb} is analogous to that for \eqref{eq2.61hj}.
Moreover, if $d\ge 4$, then we may take $\gamma=1$ in
\eqref{eq1.46bc}, \eqref{eq2.61hj}, and \eqref{eq3.71pb} since in
that case, we may take $\alpha=d/(d-2)<d$ in \eqref{eq1.72dy}.
\end{rem}

%
\section{Applications to quantitative photo-acoustic imaging} \label{sec:app}
%

In this section we deal with the problem of quantitative
photo-acoustic imaging to reconstruct the optical absorption
coefficient from the absorbed energy density. The absorbed energy
density can be reconstructed using the measurements of the
acoustic wave on the boundary of the medium. See, for instance,
\cite{ABJK, wang}.

Reconstruction of the optical absorption coefficient, $\mu_a$,
from the absorbed energy density, $A(x)$, is subtle since $\mu_a$
is related to $A(x)$ in a nonlinear and implicit way. In fact,
$\mu_a$ is related to $A(x)$ by
 \beq\label{AmuPhi}
 A = \mu_a \Phi
 \eeq
Here $\Phi$ is the light fluence which depends on the distribution
of scattering and absorption within $\Omega$, as well as the light
sources. Let $\mu_s$ be the scattering coefficient. The function
$\Phi$ is related to $\mu_a$ through the diffusion equation
 \beq \label{phi}
 \bigg( \frac{i \omega}{c} + \mu_a(x) - \frac{1}{3} \nabla \cdot \frac{1}{\mu_a(x) + \mu_s(x)} \nabla \bigg) \Phi (x) = 0 \quad \mbox{in } \GO,
 \eeq
with the boundary condition
 \beq\label{phib}
 \frac{1}{3(\mu_a(x) + \mu_s(x))}\frac{\partial \Phi}{\partial \nu} = g \quad \mbox{on } \partial \Omega,
 \eeq
where $g$ denotes the light source and $\omega$ is a given
frequency. Equation \eqnref{phi} is derived based on the diffusion
approximation to the transport equation which holds when $\mu_s
\gg \mu_a$. See, for instance,  \cite{arridge, kaipio}. Note that
in \cite{ABJK2}, the boundary condition is a Robin boundary
condition. However, it is easy to check that all the estimates
derived in  \cite[Section 2]{ABJK2} hold for the Neumann boundary
condition \eqnref{phib}.

We restrict ourselves to the three-dimensional case and suppose
that the medium contains a small absorbing anomaly whose
absorption coefficient is to be reconstructed. The small unknown
anomaly $D$ is modeled as
 \beq\label{DzB}
 D = z + \Ge B,
 \eeq
where $z$ represents the location of $D$, $B$ is a reference domain which contains the origin, and $\Ge$ is a small parameter representing the diameter of the anomaly. We assume that the anomaly is away from the
boundary $\p \GO$, namely
 \beq \label{cone}
 \mbox{dist}(z, \partial \Omega) \ge C_0
 \eeq
for some constant $C_0$. Since $D$ is small and absorbing, and the background absorption is quite small compared to the scattering, we may assume that
 \beq\label{GmaD}
 \Gm_a(x) = \Gm_a \chi_D(x)
 \eeq
where $\Gm_a$ is a constant and $\chi_D$ is the characteristic
function of $D$. Then, \eqnref{phi} and \eqnref{phib} may be
approximated by \beq \label{Phiphoto} \left\{
\begin{array}{rl}
\ds  \bigg( \frac{i \omega}{c} + \mu_a \chi_D(x) - \frac{1}{3} \nabla \cdot \frac{1}{\mu_s(x)} \nabla \bigg) \Phi (x) &= 0 \quad \mbox{in } \GO, \\
\nm \ds \frac{1}{3 \mu_s(x)}\frac{\p \Phi}{\p \nu} &= g \quad
\mbox{on } \p \GO.
 \end{array} \right.
 \eeq
Since $D$ is small, we may regard $\Phi$ as a perturbation of
$\Phi^{(0)}$ which is the solution of
 \beq \label{Phizero}
 \left\{
 \begin{array}{rl}
 \ds  \bigg( \frac{i \omega}{c} - \frac{1}{3} \nabla \cdot \frac{1}{\mu_s(x)} \nabla \bigg) \Phi^{(0)} (x) &= 0 \quad \mbox{in } \GO, \\
 \nm \ds \frac{1}{3 \mu_s(x)}\frac{\p \Phi^{(0)}}{\p \nu} &= g \quad \mbox{on } \p \GO.
 \end{array} \right.
 \eeq

The reconstruction methods in \cite{ABJK2} deeply rely on the following asymptotic formula ${\Phi} -\Phi^{(0)}$, which was obtained under
the assumption that $\mu_s$ is constant:
\begin{equation} \label{eqasymp}
({\Phi} -\Phi^{(0)})(z) \approx  3 \Ge^2 \mu_a \mu_s \Phi^{(0)}(z)
\hat{N}_B(0) - \Ge \frac{\mu_a}{ \mu_s} \Scal_B[\nu](0) \cdot
\nabla \Phi^{(0)}(z),
\end{equation}
where $\hat{N}_B$ be the Newtonian potential of $B$, which is given by
 \beq
 \hat{N}_B(x): = \int_B
 \Gamma(x - y)\, dy, \quad x \in \RR^3,
 \eeq
and $\Scal_B$ is the single layer potential associated to $B$,
which is given for a density $\psi \in L^2(\partial B)$ by
 $$
 \Scal_B [\psi] (x) := \int_{\p B} \Gamma (x-y) \psi (y) \, d
 \sigma(y) , \quad x \in \RR^3.
 $$
Here $\Gamma$ is the fundamental solution to the Laplacian in
three dimensions, {\it i.e.},
 $$
 \Gamma(x):= - \frac{1}{4\pi |x|}.
 $$

The purpose of this section is to show that the asymptotic
expansion \eqnref{eqasymp} holds even when $\mu_s$ is variable.
The following theorem holds.
\begin{thm} Let $\Omega$ be a bounded $\mathcal{C}^1$-domain in $\RR^3$.
Let $D= z + \Ge B,$ where $B$ is a bounded Lipschitz domain in
$\RR^3$ containing the origin. Suppose that $\mu_a$ is given by
\eqnref{GmaD} and $\mu_s \in
\mathcal{C}^{1,\lambda}(\overline{\Omega})$, $\lambda \in (0,1)$,
and set $\bar\mu_s:= \mu_s(z)$. Then, as $\Ge \rightarrow 0$, we
have
\begin{equation} \label{eqasymp-g}
({\Phi} -\Phi^{(0)})(z) \approx  3 \Ge^2 \mu_a \bar\mu_s
\Phi^{(0)}(z) \hat{N}_B(0) - \Ge \frac{\mu_a}{\bar\mu_s}
\Scal_B[\nu](0) \cdot \nabla \Phi^{(0)}(z),
\end{equation}
where the error term is less than
$$
C \bigg( \Ge^{1+ {1}/{p}} \mu_a \bar\mu_s^{3/2} (1+ \Ge
\sqrt{\bar\mu_s}) \bigg( \Ge^2 \mu_a \bar\mu_s +
 \frac{\mu_a}{\bar\mu_s}\bigg) + \Ge \sqrt{\bar\mu_s} \bigg( \Ge^2 \mu_a \bar\mu_s + (\frac{\mu_a}{\bar\mu_s})^2
 \bigg)  + \Ge^2 \mu_a\bigg),
$$
for $p>3$ and some constant $C$ depending on $||
\mu_s||_{\mathcal{C}^{1,\lambda}}, \lambda, \Omega$,  $\omega/c$,
and $g$.
\end{thm}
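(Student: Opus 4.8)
The plan is to reduce the estimate to the Neumann-function bounds of Theorem~\ref{thm27} via a representation formula. Put $k:=\omega/c$ and $\gamma_b:=(3\mu_s)^{-1}$, and let $L:=\nabla\cdot\gamma_b\nabla-ik$ be the background diffusion operator; since $\mu_s\in\mathcal{C}^{1,\lambda}(\overline\Omega)$ we have $\gamma_b\in\mathcal{C}^{1,\lambda}(\overline\Omega)$, so $L$ meets the hypotheses of Theorems~\ref{thm1ap}, \ref{thm2ap} and \ref{thm27}; write $N$ for its Neumann function on $\Omega$. By \eqref{cone}, $\dist(z,\partial\Omega)\ge C_0$, so Remark~\ref{rmk2.8} makes \eqref{eq1.46bc} and \eqref{eq2.61hj} available for $N(x,y)$ with $x,y$ in a fixed neighbourhood of $z$ containing $D$ once $\epsilon$ is small. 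Inserting $\mu_a=\mu_a\chi_D$ into \eqref{phi}, the inclusion perturbs the coefficients of the diffusion operator only inside $D$: the zeroth-order absorption term by $\mu_a\chi_D$ and --- through the factor $\mu_a+\mu_s$ in the principal coefficient --- the principal coefficient by $(\gamma-\gamma_b)\chi_D$, where $\gamma-\gamma_b=-\mu_a\bigl(3\mu_s(\mu_a+\mu_s)\bigr)^{-1}$ on $D$; the conormal boundary data on $\partial\Omega$ are untouched because $D$ is interior. By interior (transmission) regularity $\Phi\in W^{1,p}_{loc}(\Omega)$ for all $p$, so $u:=\Phi-\Phi^{(0)}$ is a weak solution of $-Lu=-\mu_a\chi_D\Phi+\nabla\cdot\bigl((\gamma-\gamma_b)\chi_D\nabla\Phi\bigr)$ with zero conormal data, and \eqref{eqM1.e} gives
\[
(\Phi-\Phi^{(0)})(z)=-\mu_a\int_D N(z,y)\,\Phi(y)\,dy-\int_D\nabla_y N(z,y)\cdot\bigl(\gamma(y)-\gamma_b(y)\bigr)\nabla\Phi(y)\,dy.
\]

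Next I would freeze coefficients at $z$. Let $N_0$ be the Neumann function of $L_0:=\gamma(z)\Delta-ik$ on $\Omega$; since $-L_0N_0(\cdot,z)=\delta_z$ and $\Delta\Gamma=\delta_0$, near the diagonal $N_0(x,z)$ equals $c_z\,\Gamma(x-z)$ with $c_z=-\gamma(z)^{-1}$, plus a part bounded near $z$ (as $z$ is away from $\partial\Omega$) and plus $O\bigl(\sqrt{k/\gamma(z)}\bigr)$-corrections from the $ik$ term. Using \eqref{eq1.46bc} for $N$ and \eqref{eq2.61hj} for $\nabla_yN$ --- the H\"older continuity of $\gamma$ allowing the passage from the value $\gamma(y)$ frozen in Theorem~\ref{thm27} to $\gamma(z)$ --- I replace $N(z,y)$ by $c_z\Gamma(z-y)$ and $\nabla_yN(z,y)$ by $c_z\nabla_y\Gamma(z-y)$, incurring errors of order $|z-y|^{2-d+\lambda}$ and $|z-y|^{1-d+\lambda}$, both integrable over $D$. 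In the first integral I use $\Phi|_D\approx\Phi^{(0)}(z)$; in the second, that $\nabla\Phi|_D$ is, to leading order, the constant vector $\nabla\Phi^{(0)}(z)$. Rescaling $y=z+\epsilon\xi$, using $\Gamma(-\epsilon\xi)=\epsilon^{-1}\Gamma(\xi)$, $\nabla\Gamma(-\epsilon\xi)=-\epsilon^{-2}\nabla\Gamma(\xi)$, and the divergence-theorem identities $\int_B\Gamma(\xi)\,d\xi=\hat N_B(0)$, $\int_B\nabla\Gamma(\xi)\,d\xi=\int_{\partial B}\Gamma(-\xi)\nu\,d\sigma=\Scal_B[\nu](0)$, a routine computation with $\gamma(z)^{-1}=3\bar\mu_s$ and $(\gamma-\gamma_b)|_D\approx-\mu_a(3\bar\mu_s^2)^{-1}$ produces $3\epsilon^2\mu_a\bar\mu_s\Phi^{(0)}(z)\hat N_B(0)$ from the first integral and $-\epsilon\frac{\mu_a}{\bar\mu_s}\Scal_B[\nu](0)\cdot\nabla\Phi^{(0)}(z)$ from the second.

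The hard part will be the quantitative control of $\Phi$, and above all of $\nabla\Phi$, inside the small inclusion $D$, with the dependence on $\epsilon,\mu_a,\bar\mu_s$ made explicit --- the analogue for variable $\mu_s$ of what was carried out in \cite{ABJK2}. One needs an a priori smallness bound for $\Phi-\Phi^{(0)}$ (to license $\Phi|_D\approx\Phi^{(0)}(z)$), and --- since the principal coefficient jumps across $\partial D$ --- a description of the discontinuous field $\nabla\Phi|_D$ as $\nabla\Phi^{(0)}(z)$ plus a polarization-type correction of relative size $\sim\mu_a/\bar\mu_s$; this is obtained by recasting the interior problem on $D$ as a small perturbation (an integral equation on $D$ driven by $\Phi^{(0)}$, solved by Neumann iteration using the mapping properties of $N$ from Theorem~\ref{thm1ap} and the a priori estimates of Lemma~\ref{lem2.2ap}), giving bounds for $\norm{\Phi-\Phi^{(0)}(z)}_{L^\infty(D)}$ and $\norm{\nabla\Phi-\nabla\Phi^{(0)}(z)}_{L^p(D)}$ with some $p>d=3$. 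The claimed error then assembles from: the $N-N_0$ comparison errors integrated over $D$ (the $\epsilon^2\mu_a$ term and part of the first term); the $k$-dependence of the corrections to the singularity of $N_0$, governed by $\sqrt{k/\gamma(z)}\sim\sqrt{\bar\mu_s}$ --- whence the $\sqrt{\bar\mu_s}$, $\bar\mu_s^{3/2}$ and $1+\epsilon\sqrt{\bar\mu_s}$ factors --- together with the cruder $k|x-y|^{3-d+\lambda}$ term of \eqref{eq1.59gg} where only $\gamma\in\mathcal{C}^{0,\lambda}$ is used; the polarization correction to $\nabla\Phi|_D$, squared against the coefficient $\gamma-\gamma_b$, giving the $(\mu_a/\bar\mu_s)^2$ term; and a H\"older inequality against $\nabla\Phi|_D\in L^p(D)$ with $p>3$ (the threshold $p>d$ being exactly what upgrades $W^{2,p}$ control of the rescaled solution to a $\mathcal{C}^1$ bound), which produces the factor $\epsilon^{1/p}$. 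Finally, $\Phi^{(0)}(z)$, $\nabla\Phi^{(0)}(z)$ and the H\"older seminorms of $\Phi^{(0)}$ near $z$ are bounded in terms of $\norm{\mu_s}_{\mathcal{C}^{1,\lambda}},\lambda,\Omega,\omega/c$ and $g$ by interior Schauder estimates for \eqref{Phizero}, valid since $z$ stays away from $\partial\Omega$; collecting the contributions yields \eqref{eqasymp-g}.
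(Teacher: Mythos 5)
Your proposal follows essentially the same route as the paper's own (outline of a) proof: the representation formula for $\Phi-\Phi^{(0)}$ as an integral over $D$ against the Neumann function (the paper's \eqref{mainasym}, which you rederive directly from \eqref{eqM1.e}), the two-step comparison $N\to N_0\to$ constant multiple of $\Gamma$ using Theorem~\ref{thm27} together with the constant-coefficient estimates of \cite[Lemma 2.2]{ABJK2} (whence the $\sqrt{\bar\mu_s}$-type factors), and the $W^{1,p}(D)$, $p>3$, integral-equation/Neumann-series argument (the paper's operators $\Mcal$, $\Ncal$, $\Rcal$) to control $\Phi$ and $\nabla\Phi$ on $D$ and assemble the error, followed by the same rescaling identities producing $\hat N_B(0)$ and $\Scal_B[\nu](0)$. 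Your signs (a representation formula with the opposite overall sign, paired with the singularity $-\gamma(z)^{-1}\Gamma$ of $N$ under this paper's convention \eqref{basicprob}) differ from the paper's outline, which imports the conventions of \cite{ABJK2}, but they are self-consistent and give the same leading terms.
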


Since the proof is essentially the same as that in \cite{ABJK2},
we only outline the proof without much details.

Let $N(x,y)$ be the Neumann function of the operator $\frac{i \omega}{c} - \frac{1}{3} \nabla \cdot \frac{1}{\mu_s(x)} \nabla$ on $\GO$. Then one can show by following the same lines of the proof of \cite[Lemma 2.1]{ABJK2} that
for any $x\in \Omega$,
 \beq\label{mainasym}
 \begin{array}{lll} \ds ({\Phi} -\Phi^{(0)})(x) &=&\ds
 \mu_a \int_D \Phi(y) N(x,y)\; dy\\ \nm && + \, \ds \frac{1}{3}
 \int_D (\frac{1}{\mu_a +\mu_s(y)} - \frac{1}{\mu_s(y)})  \nabla
 \Phi(y)\cdot \nabla_y N(x,y)\, dy. \end{array}
 \eeq
Let $N_0(x,y)$ be the Neumann function of $\frac{i \omega}{c} -
\frac{1}{3\bar\mu_s} \GD$ on $\GO$. We suppose that
$\frac{\bar\mu_s}{3\mu_s}$ satisfy the ellipticity condition
\eqnref{eqP-02}. It follows from Theorem \ref{thm27} that there is
a constant $C$ such that
\begin{align*}
\abs{N(x,y)-N_0(x,y)} & \le C \bar\mu_s \abs{x-y}^{-1+\lambda}, \\
\abs{\nabla_x (N(x,y)- N_0(x,y))} &\le C \bar\mu_s \abs{x-y}^{-2+\lambda},\\
\abs{\nabla^2_x (N(x,y)-N_0(x,y))} &\le C \left(\bar\mu_s
\abs{x-y}^{-3+\lambda} + \bar\mu_s^2 \abs{x-y}^{-1+\lambda}
\right),
\end{align*}
for all $x, y \in D$ provided that $\Go$ is bounded. On the other hand, it is proved in \cite[Lemma 2.2]{ABJK2} that there is a constant $C$ such that
 \begin{align*}
 | N_0(x,y)-3\bar\mu_s \Gamma(x-y) | & \le C \bar\mu_s^{3/2} , \\
 | \nabla_x (N_0(x,y)-3\bar\mu_s \Gamma(x-y)) | & \le C\left( \bar\mu_s^{2} + \bar\mu_s^{3/2}|x-y|^{-1} \right), \\
 | \nabla^2_x (N_0(x,y)-3\bar\mu_s \Gamma(x-y)) | & \le C \left( \bar\mu_s^{5/2} + \bar\mu_s^{3/2}|x-y|^{-2} \right),
 \end{align*}
for all $x, y \in D$ provided that $\Ge \sqrt{\bar\mu_s}$ is sufficiently small. Therefore, if we put
 \beq \label{rxy}
 R(x,y)= N(x,y)-3\bar\mu_s \Gamma(x-y),
 \eeq
we obtain the following lemma. \begin{lem} Let $R$ be defined by
\eqnref{rxy}. There exists a constant $C$ such that
 \begin{align}
 | R(x,y) | & \le C \left( \bar\mu_s^{3/2} + \bar\mu_s \abs{x-y}^{-1+\lambda} \right), \label{rxy1} \\
 | \nabla_x R(x,y) | & \le C\left( \bar\mu_s^{2} + \bar\mu_s^{3/2}|x-y|^{-1} + \bar\mu_s \abs{x-y}^{-2+\lambda} \right), \label{rxy2} \\
 | \nabla^2_x R(x,y) | & \le C \left( \bar\mu_s^{5/2} + \bar\mu_s^{3/2}|x-y|^{-2} + \bar\mu_s \abs{x-y}^{-3+\lambda} + \bar\mu_s^2 \abs{x-y}^{-1+\lambda} \right).
 \label{rxy3}
 \end{align}
\end{lem}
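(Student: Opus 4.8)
The plan is to read \eqref{rxy1}--\eqref{rxy3} off the triangle inequality, using $N_0$ as an intermediate object:
\[
R(x,y)=\bigl(N(x,y)-N_0(x,y)\bigr)+\bigl(N_0(x,y)-3\bar\mu_s\,\Gamma(x-y)\bigr),
\]
and similarly after applying $\nabla_x$ and $\nabla_x^2$ to this identity. In each case the first summand is controlled by the $N$-versus-$N_0$ estimates displayed just above (a consequence of Theorem~\ref{thm27}) and the second by the $N_0$-versus-$3\bar\mu_s\Gamma$ estimates of \cite[Lemma~2.2]{ABJK2}; adding the two matching bounds gives the lemma, with no cancellation or further estimation. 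Explicitly, \eqref{rxy1} is the sum of $C\bar\mu_s|x-y|^{-1+\lambda}$ and $C\bar\mu_s^{3/2}$; \eqref{rxy2} is the sum of $C\bar\mu_s|x-y|^{-2+\lambda}$ and $C\bigl(\bar\mu_s^{2}+\bar\mu_s^{3/2}|x-y|^{-1}\bigr)$; and \eqref{rxy3} is the sum of $C\bigl(\bar\mu_s|x-y|^{-3+\lambda}+\bar\mu_s^{2}|x-y|^{-1+\lambda}\bigr)$ and $C\bigl(\bar\mu_s^{5/2}+\bar\mu_s^{3/2}|x-y|^{-2}\bigr)$.

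For completeness I would recall why the first input family is a genuine consequence of Theorem~\ref{thm27} and why, together with the second, it holds for \emph{all} $x,y\in D$. Multiplying the defining equation $\bigl(\tfrac{i\omega}{c}-\tfrac13\nabla\cdot\tfrac1{\mu_s}\nabla\bigr)N(\cdot,y)=\delta_y$ by $\bar\mu_s$ shows that $\widetilde N:=\bar\mu_s^{-1}N$ and $\widetilde N_0:=\bar\mu_s^{-1}N_0$ are the Neumann functions, respectively, of $\nabla\cdot\tfrac{\bar\mu_s}{3\mu_s}\nabla-i\tfrac{\bar\mu_s\omega}{c}$ and of $\tfrac13\Delta-i\tfrac{\bar\mu_s\omega}{c}$ on $\Omega$, where $\gamma:=\bar\mu_s/(3\mu_s)\in\mathcal{C}^{1,\lambda}(\overline\Omega)$ is uniformly elliptic by \eqref{eqP-02}, has value $\tfrac13$ at $z$, and $k:=\bar\mu_s\omega/c\ge k_0$. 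With $d=3$, estimates \eqref{eq1.46bc}, \eqref{eq2.61hj}, and \eqref{eq2.60gg} bound $\widetilde N-\widetilde N_0$ and its first two $x$-derivatives by $C|x-y|^{-1+\lambda}$, $C|x-y|^{-2+\lambda}$, and $C\bigl(|x-y|^{-3+\lambda}+k|x-y|^{-1+\lambda}\bigr)$ respectively; multiplying by $\bar\mu_s$ and using $k=\bar\mu_s\omega/c$ reproduces exactly the three $N$-versus-$N_0$ bounds, one power of $\bar\mu_s$ arising from the rescaling and a second, in the Hessian estimate only, from the factor $k$ in \eqref{eq2.60gg}. The derivative estimates of Theorem~\ref{thm27} require $0<|x-y|<d_y/2$; since $\dist(z,\partial\Omega)\ge C_0$ by \eqref{cone} and $\diam D=\Ge\,\diam B$, every pair $x,y\in D$ meets this once $\Ge$ is small, exactly as in Remark~\ref{rmk2.8}, while the $N_0$-versus-$3\bar\mu_s\Gamma$ estimates of \cite[Lemma~2.2]{ABJK2} hold on $D$ under the standing smallness of $\Ge\sqrt{\bar\mu_s}$.

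I do not anticipate a real obstacle: the whole content of the lemma is carried by Theorem~\ref{thm27} and by \cite[Lemma~2.2]{ABJK2}. The only care needed is the bookkeeping of the powers of $\bar\mu_s$ through the rescaling $N=\bar\mu_s\widetilde N$, and the deliberate use of the $\mathcal{C}^{1,\lambda}$-version \eqref{eq2.61hj} rather than \eqref{eq1.59gg} for the first derivative, which drops a parameter-$k$ term and so keeps the gradient bound in \eqref{rxy2} linear in $\bar\mu_s$ apart from the contribution of $N_0$.
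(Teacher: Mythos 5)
Your proposal is correct and follows essentially the same route as the paper: the paper also writes $R=(N-N_0)+(N_0-3\bar\mu_s\Gamma)$, obtains the first family of bounds from Theorem~\ref{thm27} applied to the rescaled operator with coefficient $\bar\mu_s/(3\mu_s)$ and $k=\bar\mu_s\omega/c$ (using \eqref{eq1.46bc}, \eqref{eq2.61hj}, \eqref{eq2.60gg}), the second from \cite[Lemma~2.2]{ABJK2}, and concludes by the triangle inequality. Your extra bookkeeping of the $\bar\mu_s$ powers and of the condition $|x-y|<d_y/2$ via \eqref{cone} and Remark~\ref{rmk2.8} is exactly the implicit content of the paper's argument.
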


We introduce some notation following \cite{ABJK2}. Let
 \beq
 n(x) := \int_D N(x,y)\; dy, \quad x \in D,
 \eeq
and define a multiplier $\Mcal$ by
 \beq
 \Mcal[f](x) := \mu_a n(x) f(x).
 \eeq
We then define two operators $\Ncal$ and $\Rcal$ by
 \begin{align}
 \Ncal[f](x) &:= 3\mu_a \bar\mu_s \int_D (f(y)-f(x)) \Gamma(x-y)\; dy \nonumber \\
 & \quad \quad + \bar\mu_s \int_D (\frac{1}{\mu_a +\mu_s} - \frac{1}{\mu_s}) \nabla
 f(y)\cdot \nabla_y \Gamma(x-y) \, dy , \label{nfx} \\
 \Rcal[f](x) &:= \mu_a \int_D (f(y)-f(x)) R(x,y)\; dy \nonumber \\
 & \quad \quad + \frac{1}{3} \int_D (\frac{1}{\mu_a +\mu_s} - \frac{1}{\mu_s}) \nabla
 f(y)\cdot \nabla_y R(x,y)\, dy. \label{rfx}
 \end{align}
Then, \eqnref{mainasym} can be rewritten as
 \beq\label{inteqn}
 (I-\Mcal)[\Phi] - (\Ncal+\Rcal)[\Phi] = \Phi^{(0)} \quad \mbox{on } D,
 \eeq
where $I$ is the identity operator.

For $\eta>0$, define
 $$
 T_\eta [f](x) = \int_D \frac{f(y)}{|x-y|^{3-\eta}} dy, \quad x \in D.
 $$
Then one can show using H\"older's inequality that
 \beq\label{talpha2}
 \| T_\eta [f] \|_{L^p(D)} \le C \Ge^{\eta} \| f \|_{L^p(D)}
 \eeq
for all $p > \frac{3}{\eta}$.

We fix $\lambda$ so that $\lambda > \frac{1}{2}$. Suppose that
$\Ge \sqrt{\bar\mu_s}$ and $\frac{\mu_a}{\bar\mu_s}$. Using
\eqnref{talpha2} one can show that
 \beq
 \| \Ncal [f] \|_{W^{1,p}(D)} \le C \left(\Ge^2 \mu_a \bar\mu_s + \frac{\mu_a}{\bar\mu_s} \right)
  \left\| \nabla f \right \|_{L^{p}(D)}. \label{wonepest4p}
 \eeq
One can also show using \eqnref{rxy1}-\eqnref{rxy3} and \eqnref{talpha2} that
 \beq
  \| \Rcal [f] \|_{W^{1,p}(D)} \le C \Ge \sqrt{\mu_s} \left( \mu_a \mu_s \Ge^{2}
 + \frac{\mu_a}{\mu_s} \right) \| \nabla f \|_{L^{p}(D)}. \label{wonepest4}
 \eeq
Therefore, the following lemma holds.
\begin{lem}
Let $p > 3$. Then there exists a constant $C$ such that
\eqnref{wonepest4p} and \eqnref{wonepest4} hold.
\end{lem}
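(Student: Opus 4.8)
The plan is to estimate $\Ncal[f]$ and $\Rcal[f]$ term by term directly from the representations \eqref{nfx} and \eqref{rfx}, using the decay bounds $|\GG(x-y)|\le C|x-y|^{-1}$, $|\nabla\GG(x-y)|\le C|x-y|^{-2}$, $|\nabla^2\GG(x-y)|\le C|x-y|^{-3}$ for the fundamental solution of the Laplacian in $\RR^3$, and the bounds \eqref{rxy1}--\eqref{rxy3} (together with their analogues for $\nabla_y R$ and the cross derivative $\nabla_x\nabla_y R$, which follow from $N(x,y)=\overline{N^*(y,x)}$ applied in Theorem~\ref{thm27} and one further interior estimate) for $R$. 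A preliminary reduction removes $\norm{f}_{L^p(D)}$ from the right-hand side: both operators annihilate constants --- if $f$ is constant then $f(y)-f(x)=0$ and $\nabla f=0$, so $\Ncal[f]=\Rcal[f]=0$ --- so with $c:=|D|^{-1}\int_D f\,dy$ one has $\Ncal[f]=\Ncal[f-c]$ and $\Rcal[f]=\Rcal[f-c]$, while the Poincar\'e inequality on $D=z+\Ge B$ (whose Poincar\'e constant is $\Ge$ times that of the fixed domain $B$) gives $\norm{f-c}_{L^p(D)}\le C\Ge\norm{\nabla f}_{L^p(D)}$. Thus it is enough to bound $\norm{\Ncal[f]}_{W^{1,p}(D)}$ and $\norm{\Rcal[f]}_{W^{1,p}(D)}$ by a constant times $\norm{f-c}_{L^p(D)}+\norm{\nabla f}_{L^p(D)}$. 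I will also use that, $\mu_s$ being continuous and $\Ge$ small, $\mu_s(y)$ is comparable to $\bar\mu_s$ on $D$, so the coefficient $\tfrac1{\mu_a+\mu_s}-\tfrac1{\mu_s}$ is $O(\mu_a/\bar\mu_s^{2})$ there.

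For $\Ncal$, the kernels occurring in \eqref{nfx} and, after one $x$-derivative, in $\nabla_x\Ncal[f]$ are $\GG$, $\nabla\GG$ and $\nabla^2\GG$, together with the pointwise multipliers $\int_D\GG(x-y)\,dy$ (which is $O(\Ge^{2})$ in $L^\infty$) and $\int_D\nabla\GG(x-y)\,dy$ (which is $O(\Ge)$) arising from the difference $f(y)-f(x)$. The contributions with kernels $|x-y|^{-1}$ and $|x-y|^{-2}$ are controlled by the operators $T_2$ and $T_1$ through \eqref{talpha2}, and --- carrying the prefactors $\mu_a\bar\mu_s$ and $\bar\mu_s(\tfrac1{\mu_a+\mu_s}-\tfrac1{\mu_s})\sim\mu_a/\bar\mu_s$ --- are dominated by $C(\Ge^{2}\mu_a\bar\mu_s+\tfrac{\mu_a}{\bar\mu_s})\norm{\nabla f}_{L^p(D)}$. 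The only genuinely singular contribution is $\bar\mu_s\int_D(\tfrac1{\mu_a+\mu_s}-\tfrac1{\mu_s})\nabla f(y)\cdot\nabla_x\nabla_y\GG(x-y)\,dy$ in $\nabla_x\Ncal[f]$; this is $\bar\mu_s$ times the second derivatives of the Newtonian potential of $(\tfrac1{\mu_a+\mu_s}-\tfrac1{\mu_s})\nabla f$, so the Calder\'on--Zygmund inequality bounds it in $L^p(D)$ ($1<p<\infty$) by $C\bar\mu_s\norm{(\tfrac1{\mu_a+\mu_s}-\tfrac1{\mu_s})\nabla f}_{L^p(D)}\le C\tfrac{\mu_a}{\bar\mu_s}\norm{\nabla f}_{L^p(D)}$, with constant independent of $\Ge$ --- precisely the $\Ge$-free term $\mu_a/\bar\mu_s$ in \eqref{wonepest4p}. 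Assembling the pieces gives \eqref{wonepest4p}.

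For $\Rcal$ the structure is the same, but now \emph{every} kernel is only weakly singular: by \eqref{rxy1}--\eqref{rxy3} and their cross-derivative versions, the kernels of $\Rcal[f]$ and of $\nabla_x\Rcal[f]$ are bounded by combinations of $\bar\mu_s^{3/2}$, $\bar\mu_s^{3/2}|x-y|^{-1}$, $\bar\mu_s^{3/2}|x-y|^{-2}$, $\bar\mu_s|x-y|^{-2+\lambda}$, $\bar\mu_s|x-y|^{-3+\lambda}$ and $\bar\mu_s^{2}|x-y|^{-1+\lambda}$, each of the form $|x-y|^{-(3-\eta)}$ with $\eta>0$ and hence integrable over $D$, with $\int_D|x-y|^{-(3-\eta)}\,dy\le C\Ge^{\eta}$. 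Thus \emph{no} Calder\'on--Zygmund estimate is needed: by Young's inequality (cf. \eqref{talpha2}) each such contribution is bounded in $L^p(D)$ by $C\Ge^{\eta}$ times $\norm{\nabla f}_{L^p(D)}$, the most singular kernel being $|x-y|^{-(3-\lambda)}$ from $\nabla_x\nabla_y R$. Collecting these $\Ge$-gains with the prefactor $\tfrac1{\mu_a+\mu_s}-\tfrac1{\mu_s}\sim\mu_a/\bar\mu_s^{2}$, the powers of $\bar\mu_s$ read off from \eqref{rxy1}--\eqref{rxy3}, the extra factor $\Ge$ from the reduction to $f-c$, and the hypotheses that $\Ge\sqrt{\bar\mu_s}$ and $\mu_a/\bar\mu_s$ are small, one arrives at \eqref{wonepest4}.

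The only step that is not routine is the Calder\'on--Zygmund estimate for the $\nabla_x\nabla_y\GG$ piece of $\nabla_x\Ncal[f]$, and the care needed to route all the $f(y)-f(x)$ terms through the reduction $f\mapsto f-c$ so that $\norm{f}_{L^p(D)}$ never enters the bound. It is worth emphasising the contrast with $\Rcal$: the analogous $\nabla_x\nabla_y R$ term there is \emph{not} a singular integral, precisely because the H\"older gain $\lambda$ provided by Theorem~\ref{thm27} turns $R=N-3\bar\mu_s\GG$ into a genuinely lower-order remainder --- which is exactly why \eqref{wonepest4}, unlike \eqref{wonepest4p}, carries an overall factor $\Ge$. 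Everything else is bookkeeping of the parameters $\Ge$, $\mu_a$ and $\bar\mu_s$.
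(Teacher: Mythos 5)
Your argument is correct and follows essentially the same route as the paper, which only outlines the proof by combining the kernel bounds \eqnref{rxy1}--\eqnref{rxy3} with the weakly singular estimate \eqnref{talpha2} and defers the computations to \cite{ABJK2}. The points you make explicit --- the Calder\'on--Zygmund bound for the $\nabla_x\nabla_y\Gamma$ piece of $\nabla_x\Ncal[f]$ (the source of the $\Ge$-free term $\mu_a/\bar\mu_s$ in \eqnref{wonepest4p}, which \eqnref{talpha2} alone cannot produce), the reduction eliminating $\norm{f}_{L^p(D)}$, and the need for $\nabla_y R$ and $\nabla_x\nabla_y R$ analogues of \eqnref{rxy1}--\eqnref{rxy3} --- are precisely the details the paper leaves implicit, not a different method.
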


The rest of derivation of \eqnref{eqasymp} is exactly the same as in \cite{ABJK2}. But we briefly describe it for the readers' sake. From \eqnref{inteqn}, we get
 \beq
 \Phi = \sum_{j=0}^\infty \left((I-\Mcal)^{-1}(\Ncal+\Rcal) \right)^j (I-\Mcal)^{-1}
 [\Phi^{(0)}],
 \eeq
which converges in $W^{1,p}(D)$ for all $p>3$. We then obtain
 \beq\label{wonepasym}
 \Phi(x) = (I-\Mcal)^{-1} [\Phi^{(0)}](x) + (\Ncal+\Rcal) (I-\Mcal)^{-1} [\Phi^{(0)}](x) + E(x), \quad x \in
 D,
 \eeq
where the error term $E$ satisfies
 \beq\label{errorest}
 \| E \|_{W^{1,p}(D)} \le C \Ge \bar\mu_s \mu_a (1+ \Ge \sqrt{\bar\mu_s}) \left( \Ge^2 \mu_a \bar\mu_s +
 \frac{\mu_a}{\bar\mu_s} \right) \| \Phi^{(0)} \|_{W^{1,p}(D)}.
 \eeq
Then \eqnref{eqasymp} follows from \eqnref{wonepasym} and the error of the approximation is bounded by a constant times
 $$
 \Ge^{1+ {1}/{p}} \mu_a \bar\mu_s^{3/2} (1+ \Ge \sqrt{\bar\mu_s}) \bigg( \Ge^2 \mu_a \bar\mu_s +
 \frac{\mu_a}{\bar\mu_s}\bigg) + \Ge \sqrt{\bar\mu_s} \bigg( \Ge^2 \mu_a \bar\mu_s + (\frac{\mu_a}{\bar\mu_s})^2
 \bigg)  + \Ge^2 \mu_a.
 $$
We emphasize that approximation \eqnref{eqasymp} is valid under
the assumption that $\Ge \sqrt{\bar\mu_s}$ and
$\frac{\mu_a}{\bar\mu_s}$ are small, which indicates that the size
and absorption coefficient of the anomaly are much smaller than
the scattering coefficient.

\end{document}